\newtheorem{Def}{Definition}[section]
\newtheorem{Rem}{Remark}[section]
\newtheorem{Theor}{Theorem}[section]
\newtheorem{Prop}{Proposition}[section]
\newtheorem{Lem}{Lemma}[section]
\newtheorem{assumption}{Assumption}[section]
\theoremstyle{plain}
\DeclareMathOperator{\di}{div}
\DeclareMathOperator{\inter}{int}
\newcommand{\ve}{\varepsilon}
\newcommand{\na}{\nabla}
\newcommand{\non}{\nonumber}
\newcommand{\cR}{\mathbb R}
\newcommand{\eq}[1]{\mbox{\rm {(\ref{#1})}}}
\newcommand{\yieldlimit}{\sigma_{\mathrm{y}}}
\DeclareMathOperator{\dom}{dom}
\DeclareMathOperator{\epi}{epi}
\global\long\def\eps{\varepsilon}
\global\long\def\cB{{\cal B}}
\global\long\def\cF{{\cal F}}
\global\long\def\cM{{\cal M}}
\global\long\def\cP{{\cal P}}
\global\long\def\cR{{\cal R}}
\global\long\def\norm#1{\left\Vert #1\right\Vert }
\global\long\def\weakto{\rightharpoonup}
\global\long\def\intomega{\int_{\Omega}}
\global\long\def\intrn{\int_{\Rn}}
\global\long\def\muomega{\mu_{\omega}}
\global\long\def\mupalm{\mu_{\cP}}
\global\long\def\tsweak{\stackrel{{\scriptstyle 2s}}{\weakto}}
\global\long\def\lebesgueL{{\cal L}}
\global\long\def\sL{\mathscr{L}}
\global\long\def\R{\mathbb{R}}
\global\long\def\Rn{\mathbb{R}^{n}}
\global\long\def\Rnn{\mathbb{R}^{n\times n}}
\global\long\def\Rd{\mathbb{R}^{d}}
\global\long\def\N{\mathbb{N}}
\global\long\def\v{\upsilon}
\global\long\def\bQ{\boldsymbol{Q}}
\global\long\def\ueta{u^{\eta}}
\global\long\def\ue{u^{\eps}}
\global\long\def\d{{\rm d}}
\begin{document}

\title{Stochastic homogenization of rate-dependent models of monotone type in plasticity}


\author{Martin Heida%
\thanks{Martin Heida, Weierstrass Institute for Applied Analysis and Stochastics, 
Mohrenstrasse 39, 10117 Berlin, Germany, email: Martin.Heida@wias-berlin.de, 
Tel.: +49 30 203-72-562}\,
, Sergiy Nesenenko%
\thanks{Corresponding author: Sergiy Nesenenko, Fakult\"at II, Institut f\"ur Mathematik, Technische 
Universit\"at Berlin, Strasse des 17. Juni 136, 10623 Berlin, Germany, 
email: sergiy.nesenenko@math.tu-berlin.de, Tel.: +49 (0)30 314-29-267}
}

\date{\today}
\maketitle
\begin{abstract}
In this work we deal with the stochastic homogenization of the initial
boundary value problems of monotone type. The models of monotone type under
consideration describe 
the deformation behaviour of inelastic materials with a microstructure which can be
characterised by random measures.  Based on the Fitzpatrick function concept we reduce 
the study of the asymptotic behaviour of monotone operators associated with our models
 to the problem of the stochastic 
homogenization of convex functionals within an ergodic and stationary setting.
The concept of Fitzpatrick's function helps us to introduce and show the existence of the
 weak solutions for rate-dependent systems. 
 The derivations of the homogenization results presented in this work are
based on the stochastic two-scale convergence in Sobolev spaces. For completeness, we also present some two-scale homogenization results for convex functionals, which are related to the classical $\Gamma$-convergence theory. 
\end{abstract}

\noindent{\bf{Key words:}} stochastic homogenization, random measures, plasticity,
 stochastic two-scale convergence, $\Gamma$-convergence,
monotone operator method, Fitzpatrick's function, Palm measures, 
random microstructure.\\
\\[2ex]
\textbf{AMS 2000 subject classification:} 74Q15, 74C05, 74C10,
74D10, 35J25, 34G20, 34G25, 47H04, 47H05

\section{Introduction}
In this work we are concerned with the homogenization of the initial
boundary value problem describing the deformation behavior of
inelastic materials with a microstructure which can be characterised by random measures.   

While the periodic homogenization theory
for elasto/visco-plastic models  is sufficiently well established
(see \cite{AlbNese09b,Francfort_Giacomini_2015,Miel07,Nes07,Nesenenko12a,Schweizer10,Schweizer_Veneroni_2014,Vis08b,Visintin08} and references therein), 
some improvement in the development of the techniques for the stochastic homogenization of 
the quasi-static initial boundary value problems of monotone type has to be achieved yet. 
To the best knowledge of the authors, there are only two works (\cite{heida2014stochastic,heida2016a}) 
available on the market which are concerned with the homogenization problem of 
rate-independent systems in plasticity within an ergodic and stationary setting. In this work
we extend the results obtained in \cite{heida2016a} for perfectly elasto-plastic models to rate-dependent plasticity. Our main ingredient in the construction of the stochastic homogenization theory
for rate-dependent models of monotone type is the combination of the Fitzpatrick function concept and the two-scale convergence technique in spaces equipped with random measures due to V.V. Zhikov and A.L. Pyatnitskii (see \cite{Zhikov_Pyatnitskii_2006}).
The Fitzpatrick function is used here to reduce 
the study of the asymptotic behaviour of monotone operators associated with the models
under consideration to the problem of the stochastic 
homogenization of convex functionals defined on Sobolev spaces with random measures.
\paragraph{Setting of the problem.}
 Let $\bQ\subset\mathbb{R}^3$ be an open bounded set, the set of material
points of the solid body, with a Lipschitz boundary $\partial\bQ$, 
the number $\eta >0$  denote the scaling
parameter of the microstructure and $T_e$ be some positive number (time of existence).
For $0 < t\leq T_e$
\[\bQ_t = \bQ \times (0, t). \]
 Let ${\cal S}^3$ denote the set of symmetric $3 \times
3$-matrices, and let $u_\eta(x,t) \in {\mathbb R}^3$ be the unknown
 displacement of the material point $x$ at time $t$,
$\sigma_\eta(x,t) \in {\cal S}^3 $ be the unknown Cauchy stress tensor and
$z_\eta(x,t) \in {\mathbb R}^N$ denote the unknown
 vector of internal variables.
The model equations of the problem (the microscopic problem) are
\begin{eqnarray}
\label{MPr1}  - \di_x \sigma_\eta(x,t) &=&  b(x,t),  \\
\label{MPr2} \sigma_\eta(x,t) &=&  {\mathbb C}_\eta[x](\varepsilon(\na_x u_\eta(x,t))-B z_\eta(x,t)),\\
\label{MPr3} \partial_t z_\eta(x,t) & \in &
g_\eta\big(x, B^T \sigma_\eta(x,t) - L_\eta[x]z_\eta(x,t)\big),
\end{eqnarray} 
together with the homogeneous Dirichlet boundary condition
\begin{eqnarray}
\label{MPr4}  u_\eta(x,t)=0, \hspace{5ex} (x,t)\in \partial\bQ\times(0,\infty),
\end{eqnarray} 
and the initial condition
\begin{eqnarray}
\label{MPr5}  z_\eta(x,0)=z_\eta^{(0)}(x), \hspace{12ex} x\in\bQ.
\end{eqnarray} 
In model equations \eq{MPr1} - \eq{MPr5}
 $$\varepsilon (\na_x u_\eta(x, t)) = \frac{1}{2} ( \nabla _{x} u_\eta(x,
t) + ( \nabla _{x} u_\eta(x, t))^T ) \in {\cal S}^3$$
denotes the strain
tensor (the measure of deformation), $B: {\mathbb R}^N \rightarrow {\cal S}^3$ is a linear mapping,
which assigns to each vector of internal variables $z_\eta(x, t)$ the plastic strain tensor
${\varepsilon}_{p,\eta} (x,t)\in {\cal S}^3$, i.e. the following relation
${\varepsilon}_{p,\eta} (x,t)=Bz_\eta(x,t)$ holds. We recall that the space ${\cal S}^3$ can be isomorphically identified 
 with the space ${\mathbb R}^6$ (see \cite[p. 31]{Alb98}). Therefore, the linear mapping $B: {\mathbb R}^N\to{\cal S}^3$ is defined as a composition
of a projector from ${\mathbb R}^N$ onto ${\mathbb R}^6$ and the isomorphism
between ${\mathbb R}^6$ and ${\cal S}^3$.
The transpose 
$B^T: {\cal S}^3 \rightarrow {\mathbb R}^N$ is given by
$$B^T{v}=(\hat{z},0)^T$$
for $v\in{\cal S}^3$ and $z=(\hat{z}, \tilde{z})^T\in{\mathbb R}^N,$ $\hat{z}\in{\mathbb R}^6,$
$\tilde{z}\in{\mathbb R}^{N-6}$.

For every $x \in\bQ$ we denote
by ${\mathbb C}_\eta[x]: {\cal S}^3 \rightarrow {\cal S}^3$ a linear
symmetric mapping, the elasticity tensor. It is assumed that the mapping
  $x \rightarrow {\mathbb C}_\eta[x]$ is measurable. Further, we suppose that there
exist two positive constants $0 < \alpha <\beta$ such that the two-sided inequality
\begin{eqnarray}
    \alpha | \xi | ^{2} \leq {\mathbb C}_\eta[ x ] \xi \cdot \xi 
    \leq \beta | \xi | ^{2} \ \ \ \textrm{for} \ \textrm{any} \ \xi
    \in {\cal S}^3. \non
\end{eqnarray}
is satisfied uniformly with respect to $x\in\bQ$ and $\eta>0$.
The given function $b: \bQ\times [0, \infty ) \rightarrow {\mathbb R}^3$ is
the volume force.
The ($N\times N$)-matrix $L_\eta[x]$ represents hardening effects. It is assumed to be
positive semi-definite, only.
For all $x \in\bQ$ the function $z \rightarrow g_\eta(x,z): {\mathbb R}^N
\rightarrow 2^{{\mathbb R}^N}$ is maximal monotone and
satisfies the following condition $$0 \in g_\eta(x, 0), \hspace{4ex}x\in\bQ.$$
 The mapping $x \rightarrow \left(L_\eta[x], g_\eta(x,\cdot)\right)$ is measurable. 
  \begin{Rem} Visco-plasticity is typically included in the former conditions by choosing the function $g_\eta$ to be in Norton-Hoff form, i.e. 
\begin{align*}
  g_\eta(x,\Sigma)=[|{\Sigma}|-\yieldlimit(x)]_+^{r_\eta(x)}\,\frac{\Sigma}{|\Sigma|}\, , 
  \quad\quad\Sigma\in{\cal S}^3,\, x\in\bQ,
\end{align*}  
where $\yieldlimit:\bQ\to(0, \infty)$ is the flow stress function and $r_\eta:\bQ\to(0, \infty)$ is some material function together with $[x]_+:=\max(x,0)$.  
\end{Rem}
In order to specify the dependence of the model coefficients in \eq{MPr1} - \eq{MPr5} 
on the microstructure scaling parameter $\eta>0$, we introduce the concept of a spatial
dynamical system. Throughout this paper, we follow the setting of Papanicolaou and Varadhan
\cite{papanicolaou1979boundary} and make the following assumptions.
\begin{assumption}
\label{assu:Omega-mu-tau}Let $(\Omega,\cF_{\Omega},\cP)$ be a probability
space with countably generated $\sigma$-algebra $\cF_{\Omega}$.
Further, we assume we are given a family $(\tau_{x})_{x\in\Rn}$ of
measurable bijective mappings $\tau_{x}:\Omega\mapsto\Omega$, having
the properties of a \emph{dynamical system }on $(\Omega,\cF_{\Omega},\cP)$,
i.e. they satisfy (i)-(iii):
\begin{enumerate}
\item [(i)]$\tau_{x}\circ\tau_{y}=\tau_{x+y}$ , $\tau_{0}=id$ (Group
property)
\item [(ii)]\label{enu:measure-preserv}$\cP(\tau_{-x}B)=\cP(B)\quad\forall x\in\Rn,\,\,B\in\cF_{\Omega}$
(Measure preserving)
\item [(iii)]$A:\Rn\times\Omega\rightarrow\Omega,\,(x,\omega)\mapsto\tau_{x}\omega$
is measurable (Measurablility of evaluation) 
\end{enumerate}
We finally assume that the system $(\tau_{x})_{x\in\Rn}$ is ergodic.
This means that for every measurable function $f:\Omega\rightarrow\R$
there holds 
\begin{equation}
\begin{split}\left[f(\omega)\stackrel{}{=}f(\tau_{x}\omega)\,\,\forall x\in\Rn\,,\,a.e.\,\,\omega\in\Omega\right] & \Rightarrow\left[f(\omega)=const\,\,\cP\textnormal{-a.e.}\,\omega\in\Omega\right]\,.\end{split}
\label{eq:def_ergodicity}
\end{equation}
\end{assumption}
For reader's convenience, we recall the following well-known result (see \cite[Section VI.15]{Doob1994}). 
\begin{Lem}
Let $(A,\cF,\mu)$ be a finite measure space with countably generated
$\sigma$-algebra $\cF$. Then, for all $1\leq p<\infty$, $L^{p}(A;\mu)$
contains a countable dense set of simple functions. 
\end{Lem}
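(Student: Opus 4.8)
The plan is to construct the countable dense family explicitly from a countable generating system for $\cF$ and then reduce density in $L^{p}$ to a single measure-theoretic approximation statement. Since $\cF$ is countably generated, fix a countable family $\mathcal{G}=\{G_{k}\}_{k\in\N}\subseteq\cF$ with $\sigma(\mathcal{G})=\cF$, and let $\mathcal{A}_{0}$ be the \emph{algebra} generated by $\mathcal{G}$, i.e.\ the closure of $\mathcal{G}\cup\{\emptyset,A\}$ under finite unions, intersections and complements (but not countable operations); an elementary induction shows $\mathcal{A}_{0}$ is countable. The proposed dense set is
\[
\mathcal{S}_{0}:=\Big\{\sum_{i=1}^{m}q_{i}\,\mathbf{1}_{C_{i}}\ :\ m\in\N,\ q_{i}\in\Q,\ C_{i}\in\mathcal{A}_{0}\Big\},
\]
which is countable; for $\R^{D}$-valued $L^{p}$ one simply replaces $\Q$ by $\Q^{D}$.

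The first step is the classical density of simple functions: for $1\le p<\infty$ the $\cF$-measurable simple functions are dense in $L^{p}(A;\mu)$, since every $f\in L^{p}$ is the $L^{p}$-limit of its truncations $f\,\mathbf{1}_{\{|f|\le n\}}$ (dominated convergence), and each bounded measurable function is a uniform --- hence, because $\mu(A)<\infty$, also an $L^{p}$ --- limit of simple functions. Thus it suffices to approximate an arbitrary simple function $\sum_{i=1}^{m}c_{i}\,\mathbf{1}_{B_{i}}$ with $c_{i}\in\R$ and $B_{i}\in\cF$ in $L^{p}$ by elements of $\mathcal{S}_{0}$. Replacing each $c_{i}$ by a nearby rational reduces the task to approximating a single indicator $\mathbf{1}_{B}$, $B\in\cF$, by $\mathbf{1}_{C}$ with $C\in\mathcal{A}_{0}$; and since $\|\mathbf{1}_{B}-\mathbf{1}_{C}\|_{L^{p}(A;\mu)}^{p}=\mu(B\triangle C)$, everything comes down to the claim that for every $B\in\cF$ and every $\varepsilon>0$ there is a $C\in\mathcal{A}_{0}$ with $\mu(B\triangle C)<\varepsilon$.

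This claim I would establish by a good-sets argument. Put
\[
\mathcal{D}:=\bigl\{B\in\cF\ :\ \forall\,\varepsilon>0\ \exists\,C\in\mathcal{A}_{0}\ \text{with}\ \mu(B\triangle C)<\varepsilon\bigr\}.
\]
Then $\mathcal{A}_{0}\subseteq\mathcal{D}$, and $\mathcal{D}$ is closed under complements because $B^{c}\triangle C^{c}=B\triangle C$ and $C^{c}\in\mathcal{A}_{0}$. For countable unions, let $B_{n}\in\mathcal{D}$ and $B=\bigcup_{n}B_{n}$; by continuity from below and $\mu(A)<\infty$ choose $N$ with $\mu\bigl(B\setminus\bigcup_{n\le N}B_{n}\bigr)<\varepsilon/2$, then choose $C_{n}\in\mathcal{A}_{0}$ with $\mu(B_{n}\triangle C_{n})<\varepsilon/(2N)$ for $n\le N$; with $C:=\bigcup_{n\le N}C_{n}\in\mathcal{A}_{0}$ one obtains $\mu(B\triangle C)\le\mu\bigl(B\setminus\bigcup_{n\le N}B_{n}\bigr)+\sum_{n\le N}\mu(B_{n}\triangle C_{n})<\varepsilon$. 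Hence $\mathcal{D}$ is a $\sigma$-algebra containing $\mathcal{G}$, so $\mathcal{D}=\sigma(\mathcal{G})=\cF$, which proves the claim and with it the lemma. The only delicate point is the closure of $\mathcal{D}$ under countable unions, where the \emph{finiteness} of $\mu$ is used essentially (in the merely $\sigma$-finite case one would first intersect with an exhausting sequence of sets of finite measure); the remainder is routine bookkeeping, so I anticipate no real obstacle.
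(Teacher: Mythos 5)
The paper does not actually prove this lemma; it simply records it as a ``well-known result'' with a citation to Doob (Section~VI.15), so there is no in-text proof to compare against. Your argument is a complete and correct self-contained proof, and it follows the standard textbook route: take the (countable) algebra $\mathcal{A}_0$ generated by a countable generating family, form the countable set of simple functions with rational coefficients over $\mathcal{A}_0$, reduce density to approximating a single indicator $\mathbf{1}_B$ in the pseudometric $\mu(B\triangle C)$, and then establish that every $B\in\cF$ is so approximable by a good-sets argument showing the approximable sets form a $\sigma$-algebra containing the generators. The use of continuity from below and finiteness of $\mu$ in the countable-union step is exactly where finiteness is needed, and you flag it correctly; the bound $\mu(B\triangle C)\le\mu\bigl(B\setminus\bigcup_{n\le N}B_n\bigr)+\sum_{n\le N}\mu(B_n\triangle C_n)$ checks out. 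The only cosmetic remark: your set $\mathcal{D}$ as written is a $\sigma$-algebra once you also note $A\in\mathcal{A}_0\subseteq\mathcal{D}$, which you implicitly have since you included $A$ in the algebra; so nothing is missing. In short, you supply a clean proof for a statement the paper leaves to a reference.
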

The coefficients in \eq{MPr1} - \eq{MPr5} are defined as follows. First, we define the stationary
random fields through the relations
\begin{eqnarray}
{\mathbb C}[x, \omega]=\tilde{\mathbb C}[\tau_{x}\omega], \hspace{2ex}L[x, \omega]=\tilde{L}[\tau_{x}\omega],\non
\end{eqnarray}
and for every fixed $v\in{\mathbb R}^N$
\begin{eqnarray}
g(x, \omega, v)=\tilde{g}(\tau_{x}\omega,v),\non
\end{eqnarray}
where $\tilde{\mathbb C}$, $\tilde{L}$ 
are measurable functions over $\Omega$ and 
$\omega\mapsto \tilde{g}(\omega,\cdot)$  is 
measurable in the sense of Definition \ref{def:meas-max-monotone}. Then,
given the specified assumptions on the random fields, the coefficients 
${\mathbb C}_\eta[x]$, $L_\eta[x]$ and the mapping
$x\mapsto g_\eta(x,\cdot)$ are defined as
\begin{eqnarray}
{\mathbb C}_\eta[x]={\mathbb C}\left[\frac{x}{\eta}, \omega\right], 
\hspace{2ex}L_\eta[x]=L\left[\frac{x}{\eta}, \omega\right],\non
\end{eqnarray}
and for each fixed $v\in{\mathbb R}^N$
\begin{eqnarray}
g_\eta(x,v)=g\left(\frac{x}{\eta}, \omega, v\right).\non
\end{eqnarray}
Furthermore, we assume that 
 \[z^{(0)}_\eta(x)=\tilde{z}^{(0)}\left(x, \tau_{\frac{x}{\eta}}\omega\right),\hspace{2ex}x\in\bQ.\]
for some ergodic function $\tilde{z}^{(0)}\in L^2(\bQ\times\Omega; {\cal L}\otimes\mu)$.

From a modelling perspective, this construction is equivalent  to the assumption that 
the coefficients and the given functions in \eq{MPr1} - \eq{MPr5} are statistically homogeneous
(see \cite{Daley1988}, for example).
\paragraph{Notation.} The symbols $|\cdot|$ and $(\cdot,\cdot)$ will denote a norm and
a scalar product in ${\mathbb R}^k$, respectively.
Let $S$ be a measurable set in ${\mathbb R}^s$. For $m\in \mathbb N$, $q\in [1,\infty]$,
 we denote by $W^{m,q}(S, {\mathbb R}^k)$ the Banach space of Lebesgue
integrable functions having $q$-integrable  weak derivatives up to  order
$m$. This space is equipped with the norm $\| \cdot \|_{m,q,S}$. If $m=0$,
we write $\| \cdot \|_{q,S}$; and if (additionally) $q=2$,
we also write $\| \cdot \|_{S}$.  
We set $H^m(S, {\mathbb R}^k)= W^{m,2}(S, {\mathbb R}^k)$.
We choose the numbers $p, q$ satisfying $1 < p, q < \infty$ and
$1/p + 1/q = 1$. 
For such $p$ and $q$ one can define the bilinear 
form on the product space
$L^{p}(S, {\mathbb R}^k)\times L^{q}(S, {\mathbb R}^k)$ by
\[
(\xi, \zeta )_{S} = \int_{S} (\xi(s), \zeta(s))ds.
\]
For
functions $v$ defined on $\Omega \times [0,\infty)$ we denote by
$v(t)$ the mapping $x \mapsto v(x,t)$, which is defined on $\Omega$.
 The space $L^q(0,T_e; X)$ denotes the Banach space of all Bochner-measurable 
functions $u:[0,T_e)\to X$ such that $t\mapsto\|u(t)\|^q_X$ is integrable
on $[0,T_e)$. Finally, we frequently use the spaces $W^{m,q}(0,T_e;X)$, 
which consist of Bochner measurable functions having $q$-integrable weak
derivatives up to order $m$.


\section{Preliminaries.}\label{BasicsConAna}

In this section we briefly recall some basic facts from convex analysis and 
nonlinear functional analysis which are needed for further discussions. For more details see 
\cite{Barb76,Hu97,Pas78,Zalinescu02}, for example.
 
Let $V$ be a reflexive Banach space with the norm $\|\cdot\|$, $V^*$ 
be its dual space with the norm $\|\cdot\|_*$. The
brackets $\left< \cdot ,\cdot \right>$ denote the duality pairing between
$V$ and $V^*$. By $V$ we shall always mean a reflexive Banach space
throughout this section.

For a function $\phi:V \to \overline{\mathbb R}$ the sets
\[\dom (\phi)=\{v\in V\mid \phi(v)<\infty\}, \ \
\epi(\phi)=\{(v,t)\in V\times \cR\mid \phi(v)\le t\}\]
are called the {\it effective domain} and the {\it epigraph} 
of $\phi$, respectively. One says that the function $\phi$ is {\it proper}
if $\dom(\phi)\not=\emptyset$ and $\phi(v)>-\infty$ for every $v\in V$.
The epigraph is a non-empty closed convex
set iff $\phi$ is a proper lower semi-continuous convex function or,
equivalently, iff $\phi$ is a proper weakly
 lower semi-continuous convex function 
(see \cite[Theorem 2.2.1]{Zalinescu02}).

 The Legendre-Fenchel
conjugate of a proper convex lower semi-continuous function $\phi : V \to 
\overline{\mathbb R}$ is the function $\phi^*$ defined for each 
$v^* \in V^*$ by
\[\phi^*(v^*)=\sup_{v \in V} \{ \left< v^*, v \right> - \phi(v)\}. \]
The Legendre-Fenchel conjugate $\phi^*$ is convex, lower semi-continuous
and proper on the dual space $ V^*$. Moreover, the 
{\it Young-Fenchel inequality} holds
\begin{eqnarray}\label{YoungFenchelIneq}
\forall v\in V,\ \forall v^*\in V^*:\ \ \phi^*(v^*)+\phi(v)\ge 
\left< v^*, v \right>,
\end{eqnarray}
and the inequality $\phi\le\psi$ implies $\psi^*\le\phi^*$ for any two
 proper convex lower semi-continuous functions $\psi,\phi: V \to 
\overline{\mathbb R}$ (see \cite[Theorem 2.3.1]{Zalinescu02}). 

Due to Proposition II.2.5 in \cite{Barb76} a proper convex 
lower semi-continuous function $\phi$ satisfies the following identity
\begin{eqnarray}\label{domainConvFunc}
\inter\dom(\phi)=\inter\dom(\partial\phi),
\end{eqnarray}
where $\partial\phi: V \to 2^{V^*}$ denotes the subdifferential 
of the function $\phi$. We note that the equality in (\ref{YoungFenchelIneq})
holds iff $v^*\in\partial\phi(v)$.
\begin{Rem}\label{SubMax} We recall that the subdifferential of
 a lower semi-continuous proper and
convex function is maximal monotone
 (see \cite[Theorem II.2.1]{Barb76}) in the sense of Definition \ref{def:monotone} below.
\end{Rem}
 \paragraph{Convex integrands.} Let the numbers $p, q$ satisfy
  $1 < q\le 2\le p < \infty, 1/p + 1/q = 1.$
For a proper convex lower semi-continuous function 
$\phi:{\mathbb R}^k\to\overline{\mathbb R}$ we define a functional $I_{\phi}$ on
$L^p(G,{\mathbb R}^k)$ by
\[I_{\phi}(v)=\begin{cases}\int_G\phi(v(x))dx, & \phi(v)\in 
                                      L^1(G,{\mathbb R}^k)\\
                       +\infty, & {\rm otherwise}
  \end{cases},\]
where $G$ is a bounded domain in ${\mathbb R}^N$ with some
$N\in{\mathbb N}$. Due to
Proposition II.8.1 in \cite{Show97}, the functional $I_{\phi}$
is proper, convex, lower semi-continuous, and $v^*\in\partial I_{\phi}(v)$ iff
\begin{eqnarray}
v^*\in L^{q}(G,{\mathbb R}^k), \ 
\ v\in L^p(G,{\mathbb R}^k)\ \ {\rm and} \ \ 
\ v^*(x)\in\partial{\phi}(v(x)), \ {\rm a.e.} \non
\end{eqnarray} 
Due to the result of Rockafellar in \cite[Theorem 2]{Rockafellar68},
the Legendre-Fenchel conjugate of $I_{\phi}$ is equal to $I_{\phi^*}$, i.e.
\[\big(I_{\phi}\big)^*=I_{\phi^*},\]
where $\phi^*$ is the Legendre-Fenchel conjugate of $\phi$.
\paragraph{Maximal monotone operators.}
For a multivalued mapping $A:V \to 2^{V^*}$ the sets
\[D(A)=\{v\in V\mid Av\not=\emptyset\},\hspace{2ex} Gr A=\{[v,v^*]\in V\times V^*\mid v\in D(A),\ v^*\in Av\}\]
are called the {\it effective domain} and the {\it graph} of $A$, respectively.
\begin{Def}\label{def:monotone}
A mapping $A:V \to 2^{V^*}$ is called {\rm monotone} if and only if the following
inequality holds
 \[\left< v^* - u^*, v - u \right> \ge 0  \ \ \ \ \forall \ [v,v^*],
 [u,u^*]\in Gr A.\]
 
A monotone mapping $A:V \to 2^{V^*}$ is called {\rm maximal monotone} iff the
inequality
 \[\left< v^* - u^*, v - u \right> \ge 0  \ \ \ \ \forall \ [u,u^*]\in Gr A\]
implies $[v,v^*]\in Gr A$.
\end{Def}
It is well known (\cite[p. 105]{Pas78}) that if $A$ is a maximal monotone
operator, then for any $v\in D(A)$
the image $Av$ is a closed convex subset of $V^*$ and the graph
$Gr A$ is demi-closed\footnote{A set $A\in V\times V^*$ is demi-closed
if $v_n$ converges strongly to $v_0$ in $V$ and $v^*_n$ converges weakly
to $v^*_0$ in $V^*$ (or $v_n$ converges weakly to $v_0$ in $V$ and $v^*_n$ converges strongly to $v^*_0$ in $V^*$) and $[v_n,v_n^*]\in Gr A$,
 then $[v,v^*]\in Gr A$}.
\paragraph{Canonical extensions of maximal monotone operators.}\label{measurabilityMultis}
In this subsection we briefly present some facts about measurable
multi-valued mappings (see \cite{AubinFrankowska2008,Castaing77,Hu97,Pankov97}, for example).
We assume that $V$, and hence $V^*$, is separable and denote 
the set of maximal monotone operators from $V$ to $V^*$
 by ${\mathfrak M}(V \times V^*)$. Further, let 
$(S, \Sigma(S), \mu)$ be a $\sigma-$finite $\mu-$complete
 measurable space. 
\begin{Def} \label{def:meas-max-monotone}
A mapping $A:S\to{\mathfrak M}(V \times V^*)$ is measurable 
iff for every open set $U\in V \times V^*$ 
(respectively closed set, Borel set, open ball, closed ball),
$$\{x \in S\mid A(x)\cap U \not= \emptyset\}$$
is measurable in $S$.
\end{Def}
The fact that the closed or Borel sets can be equivalently used
 in Definition~\ref{def:meas-max-monotone} follows from
the closedness of the values of the mapping $A:S\to{\mathfrak M}(V \times V^*)$ 
(see \cite[Theorem 8.1.4]{AubinFrankowska2008}).
\begin{Rem}\label{RemMeasurabilityMultis}
Theorem 8.1.4 in \cite{AubinFrankowska2008} also implies that under the above conditions
the measurability of a mapping  $A:S\to{\mathfrak M}(V \times V^*)$ is equivalent to the existence
of a countable dense subset consisting of measurable selectors, i.e. there exists a sequence of measurable 
functions $\{v_n\}_{n\in\mathbb{N}}:\,S\to V\times V^*$ such that for any $x\in S$ the image $A(x)$
can be represented as follows
\[A(x)=\overline{\cup_{n\in\mathbb{N}} v_n(x)}.\]
\end{Rem}
 The following lemma will be used in the sequel
(see \cite[Lemma 3.1]{Visintin2013b}).
\begin{Lem}
Let a mapping $A:S\to{\mathfrak M}(V \times V^*)$ be measurable. For any ${\cal L}(S)$-measurable function
$v:S \to V$, the multivalued mapping $\hat A: x\mapsto A(x, v(x))$ is then closed-valued and measurable.
\end{Lem}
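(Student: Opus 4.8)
The plan is to reduce everything to maximal monotonicity together with a graph‑measurability argument. Recall first that $\hat A(x)=A(x)\big(v(x)\big)$ is by definition the set $\{v^{*}\in V^{*}\mid [v(x),v^{*}]\in Gr A(x)\}$, which is closed in $V^{*}$: it is closed convex when $v(x)\in D(A(x))$ (images of maximal monotone operators are closed convex, as recalled above) and empty otherwise. So only measurability of $\hat A$ requires work. By Remark~\ref{RemMeasurabilityMultis}, measurability of $A$ provides a countable family of measurable selectors $[u_{n},u_{n}^{*}]:S\to V\times V^{*}$, $n\in\N$, with $Gr A(x)=\overline{\bigcup_{n\in\N}\{[u_{n}(x),u_{n}^{*}(x)]\}}$ for every $x\in S$; these selectors are the only structure from $A$ I would use.

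My first step would be to represent $\hat A$ pointwise as a countable intersection of half‑space–valued maps. For $n\in\N$ and $x\in S$ set
\[
H_{n}(x):=\Big\{v^{*}\in V^{*}\ :\ \scp{v^{*}-u_{n}^{*}(x)}{v(x)-u_{n}(x)}\ge 0\Big\},
\]
a closed affine half‑space of $V^{*}$ (or all of $V^{*}$ if $v(x)=u_{n}(x)$). I claim $\hat A(x)=\bigcap_{n\in\N}H_{n}(x)$ for every $x$. The inclusion ``$\subseteq$'' is immediate from monotonicity of $A(x)$ applied to the pairs $[v(x),v^{*}]$ and $[u_{n}(x),u_{n}^{*}(x)]$ of $Gr A(x)$. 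For ``$\supseteq$'', take $v^{*}\in\bigcap_{n}H_{n}(x)$ and an arbitrary $[u,u^{*}]\in Gr A(x)$; density of the selectors in the graph yields a subsequence with $[u_{n_{k}}(x),u_{n_{k}}^{*}(x)]\to[u,u^{*}]$ strongly in $V\times V^{*}$, and since the duality pairing is jointly (strong–strong) continuous on $V^{*}\times V$ we may pass to the limit in $\scp{v^{*}-u_{n_{k}}^{*}(x)}{v(x)-u_{n_{k}}(x)}\ge 0$ to get $\scp{v^{*}-u^{*}}{v(x)-u}\ge 0$. As $[u,u^{*}]\in Gr A(x)$ was arbitrary, maximal monotonicity of $A(x)$ forces $[v(x),v^{*}]\in Gr A(x)$, i.e. $v^{*}\in\hat A(x)$. (This representation also re‑proves closed‑valuedness.)

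The second step would be measurability of the intersection. For each fixed $n$ the function $(x,v^{*})\mapsto\scp{v^{*}-u_{n}^{*}(x)}{v(x)-u_{n}(x)}$ on $S\times V^{*}$ is continuous in $v^{*}$ for fixed $x$ and measurable in $x$ for fixed $v^{*}$ — because $u_{n},v:S\to V$ and $u_{n}^{*}:S\to V^{*}$ are measurable and the pairing is continuous — hence a Carath\'eodory function, so $\Sigma(S)\otimes\mathcal{B}(V^{*})$‑measurable; thus $Gr H_{n}:=\{(x,v^{*}):v^{*}\in H_{n}(x)\}$ is a measurable subset of $S\times V^{*}$. Therefore
\[
Gr\hat A=\bigcap_{n\in\N}Gr H_{n}\in\Sigma(S)\otimes\mathcal{B}(V^{*}).
\]
Since $V$ is separable and reflexive, $V^{*}$ is a separable Banach space, hence Polish, and $(S,\Sigma(S),\mu)$ is $\sigma$‑finite and $\mu$‑complete; the measurable projection / Aumann–von Neumann theorem (see \cite[Theorem~8.1.4]{AubinFrankowska2008}) then upgrades ``closed values $+$ measurable graph'' to measurability of $\hat A$, which is the assertion.

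I expect the only genuinely non‑soft step to be this last one: the implication ``measurable graph $\Rightarrow$ measurable multifunction'' is false for general measurable spaces and relies essentially on the $\mu$‑completeness of $\Sigma(S)$ (part of the standing hypotheses) together with $V^{*}$ being Polish. The half‑space representation of $\hat A$ and the Carath\'eodory measurability of the $H_{n}$ are routine consequences of maximal monotonicity and of the Castaing representation of $A$. An alternative to the graph argument would be to build a Castaing representation of $\hat A$ directly from the $u_{n}$, but the projection‑theorem route is shorter.
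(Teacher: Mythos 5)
The paper does not contain its own proof of this lemma; it is stated with a citation to Visintin (Lemma 3.1), so there is no internal argument to compare against. Your proof is correct: the two-sided verification of $\hat A(x)=\bigcap_{n}H_n(x)$ (monotonicity for $\subseteq$, density of the Castaing selectors plus maximality for $\supseteq$, which also handles the degenerate case $v(x)\notin D(A(x))$ where both sides are empty), the Carath\'eodory joint measurability of each $\{(x,v^*):v^*\in H_n(x)\}$ (legitimate since $V^*$ is separable), and the passage from a measurable, closed-valued graph to a measurable multifunction via the Aumann--von Neumann projection theorem (legitimate because the standing hypotheses make $(S,\Sigma(S),\mu)$ $\sigma$-finite and $\mu$-complete and $V^*$ is Polish as a separable reflexive Banach space) are all sound.
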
 
Given a mapping $A:S\to{\mathfrak M}(V \times V^*)$, one can define
 a monotone graph from $L^p(S,V)$ to $L^q(S,V^*)$, where
$1/p + 1/q = 1$, as follows:
\begin{Def}\label{CanExtension}
 Let $A:S\to{\mathfrak M}(V \times V^*)$. The canonical extension of $A$
 from $L^p(S,V)$ to $L^q(S,V^*)$, where
$1/p + 1/q = 1$, is defined by:
\[Gr {\cal A}_p = \{[v, v^*] \in L^p(S,V)\times L^q(S,V^*)\mid
 [v(x), v^*(x)] \in Gr A(x)\ for\ a.e.\ x\in S\}.\]
\end{Def}
In the following, we will drop the index $p$ for readability. Since we always work fix $p$ at the beginning of a statement, there cannot occur confusion with this notation. Monotonicity of ${\cal A}$ defined in Definition~\ref{CanExtension}
 is obvious, while its maximality follows from the next proposition (see \cite[Proposition 2.13]{Damlamian07}).
\begin{Prop}
 Let $A:S\to{\mathfrak M}(V \times V^*)$ be measurable. 
If $Gr {\cal A}\not= \emptyset$, then ${\cal A}$ is maximal monotone.
\end{Prop}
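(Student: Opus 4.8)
The plan is to reduce the maximality of the canonical extension $\cA = \cA_p$ to a pointwise (a.e.) maximality statement and then invoke a measurable selection argument, which is exactly what Remark~\ref{RemMeasurabilityMultis} is designed to supply. So fix $p$ with $1/p+1/q=1$, and suppose $[u,u^*]\in L^p(S,V)\times L^q(S,V^*)$ satisfies the monotonicity test against the whole graph of $\cA$:
\[
\int_S \langle v^*(x)-u^*(x),\, v(x)-u(x)\rangle\,d\mu(x)\ \ge\ 0
\qquad\text{for all } [v,v^*]\in Gr\,\cA .
\]
Since $Gr\,\cA\neq\emptyset$ by hypothesis, fix once and for all a reference selector $[w,w^*]\in Gr\,\cA$; we will use it to localize. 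The goal is to show $[u(x),u^*(x)]\in Gr\,A(x)$ for a.e. $x\in S$, which is precisely the definition of $[u,u^*]\in Gr\,\cA$.

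First I would establish the pointwise monotonicity inequality: for a.e. $x\in S$ and every $[\xi,\xi^*]\in Gr\,A(x)$,
\[
\langle \xi^*-u^*(x),\, \xi-u(x)\rangle\ \ge\ 0 .
\]
The mechanism is localization. By Remark~\ref{RemMeasurabilityMultis}, there is a countable family $\{v_n=(s_n,s_n^*)\}_{n\in\mathbb N}$ of measurable selectors of $A$ whose closure is $A(x)$ for every $x$. Replacing, on an arbitrary measurable set $E\subseteq S$, the selector $(s_n,s_n^*)$ and elsewhere the reference selector $(w,w^*)$ yields an admissible pair $[v,v^*]\in Gr\,\cA$ (this is where I must check integrability: cut $S$ into the sets where $s_n,s_n^*,w,w^*$ have controlled norms and use countable additivity, or first treat bounded selectors and pass to the general case — a routine truncation). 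Plugging such pairs into the global inequality and letting $E$ shrink gives, via the Lebesgue differentiation / "localization" lemma for integral inequalities, that $\langle s_n^*(x)-u^*(x),\, s_n(x)-u(x)\rangle\ge 0$ for a.e. $x$, simultaneously for all $n$ (countable intersection of full-measure sets). Since $\{(s_n(x),s_n^*(x))\}_n$ is dense in $Gr\,A(x)$ and the pairing is continuous in its arguments, the inequality extends to all $[\xi,\xi^*]\in Gr\,A(x)$, for a.e. $x$.

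Then I would conclude pointwise: for a.e. fixed $x$, the operator $A(x)$ is maximal monotone in $\cM(V\times V^*)$, and $[u(x),u^*(x)]$ satisfies exactly the monotonicity test that defines maximality in Definition~\ref{def:monotone}; hence $[u(x),u^*(x)]\in Gr\,A(x)$. Taking the intersection over a countable dense set of test pairs kept this to a single $\mu$-null exceptional set, so $[u(x),u^*(x)]\in Gr\,A(x)$ for a.e. $x\in S$, i.e. $[u,u^*]\in Gr\,\cA$. Monotonicity of $\cA$ being immediate from pointwise monotonicity and integration, $\cA$ is maximal monotone. The main obstacle is the localization step: one must (a) produce enough elements of $Gr\,\cA$ by gluing measurable selectors — which forces the integrability bookkeeping and is the reason the hypothesis $Gr\,\cA\neq\emptyset$ (a global selector to glue against) is needed — and (b) pass rigorously from "the integral inequality holds for every measurable set $E$" to "the integrand is $\ge 0$ a.e.", uniformly over the countable family $\{v_n\}$.
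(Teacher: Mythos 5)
The paper does not give a proof of this Proposition; it only cites \cite[Proposition 2.13]{Damlamian07}. Your overall strategy — reducing maximality of $\cA$ to a pointwise a.e.\ statement by gluing the countable measurable selectors of Remark~\ref{RemMeasurabilityMultis} against a fixed reference pair $[w,w^*]\in Gr\,\cA$ — is the right skeleton, but the localization step as you state it does not go through. Writing $\phi_n(x):=\langle s_n^*(x)-u^*(x),\,s_n(x)-u(x)\rangle$ and $\psi(x):=\langle w^*(x)-u^*(x),\,w(x)-u(x)\rangle$, the glued pair inserted into the global inequality yields
\[
\int_E \phi_n\,d\mu \;+\; \int_{S\setminus E}\psi\,d\mu \;\ge\; 0
\qquad\text{for all admissible measurable } E .
\]
As $E$ shrinks, the second integral does not vanish: it tends to the fixed constant $\int_S\psi\,d\mu\ge 0$, which is in general strictly positive. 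Dividing by $\mu(E)$ and invoking Lebesgue points thus gives no usable lower bound on $\phi_n(x)$ whatsoever; and the abstract implication ``$\int_E f+\int_{S\setminus E}g\ge 0$ for all measurable $E$ $\Rightarrow f\ge 0$ a.e.'' is simply false (take $f=g$ with $\int_S f=0$ and $f$ changing sign). So the claim that localization with one fixed $n$ already forces $\phi_n\ge 0$ a.e.\ is a genuine gap, not a routine detail.

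The missing idea is to let the gluing index depend measurably on the point, so that the whole pointwise infimum
\[
h(x) \;:=\; \inf_n\phi_n(x) \;=\; \langle u^*(x),u(x)\rangle - f_{A}\bigl(x,u(x),u^*(x)\bigr)
\]
is captured at once (the second equality uses density of the selectors in $Gr\,A(x)$ and the definition \eq{Fitzpatrick_function}). By \eq{Fitzpatrick_relation1} one has $h\le 0$ a.e., so it suffices to show $\int_S h\,d\mu\ge 0$. For this, fix $\eps>0$ and choose a \emph{measurable} index map $m:S\to\N$ with $\phi_{m(x)}(x)\le\max\bigl(h(x),-1/\eps\bigr)+\eps$; truncate to a region where $s_{m(\cdot)},s^*_{m(\cdot)}$ are bounded and glue $(w,w^*)$ outside, so the resulting pair lies in $Gr\,\cA$; insert it into the global inequality; and pass to the limit first in the truncation level (using $\psi\in L^1$) and then in $\eps$ (monotone convergence). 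This yields $\int_S h\,d\mu\ge 0$, hence $h=0$ a.e., which by \eq{Fitzpatrick_relation2} is exactly $[u(x),u^*(x)]\in Gr\,A(x)$ for a.e.\ $x$, i.e. $[u,u^*]\in Gr\,\cA$. Without this measurable-selection step, your argument does not reach the pointwise conclusion.
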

\begin{Rem}
We point out that the maximality of $A(x)$ for almost every $x\in S$ does not imply
the maximality of ${\cal A}$ as the latter can be empty (see \cite{Damlamian07}).
\end{Rem}
\paragraph{Fitzpatrick's function.} 
  For a proper operator $\beta: V\to 2^{V^{*}}$
the Fitzpatrick function is defined as the convex and lower semicontinuous function given by
\begin{eqnarray}
f_\beta(v, v^*)=\sup\{\left<v^*,v_0\right>-\left<v^*_0,v_0-v\right>\mid v^*_0\in\beta(v_0)\}, \hspace{3ex} \forall(v,v^*)\in V\times V^*. \label{Fitzpatrick_function}
\end{eqnarray}
It is known (\cite{Fitzpatrick1988}) that, whenever $\beta$ is maximal monotone,
\begin{eqnarray}
&&f_\beta(v, v^*)\ge\left<v^*,v\right>, \hspace{3ex} \forall(v,v^*)\in V\times V^*,
 \label{Fitzpatrick_relation1}\\
&&f_\beta(v, v^*)=\left<v^*,v\right> \hspace{3ex}\Leftrightarrow \hspace{3ex}v^*\in\beta(v). \label{Fitzpatrick_relation2}
\end{eqnarray}
Any measurable maximal monotone operator $A:S\to{\mathfrak M}(V \times V^*)$ can be represented
by its Fitzpatrick function $f_A:S\times V \times V^* \to\overline{\mathbb R}$, which is 
$\Sigma(S)\otimes{\cal B}(V \times V^*)$-measurable. Namely, the graph of a mapping $A:S\to{\mathfrak M}(V \times V^*)$
can be written in the form (see \cite[Proposition 3.2]{Visintin2013b})
\[\text{for}\ a.e.\ x\in S\,\, Gr A(x)= \left\{[v, v^*] \in V\times V^*\mid f_A(x, v, v^*)=\left< v, v^* \right>\right\}.\]
We note that the measurability of the Fitzpatrick function $f_A:S\times V \times V^* \to\overline{\mathbb R}$ follows directly from its definition and
Remark~\ref{RemMeasurabilityMultis}.

The graph of the canonical extension of a measurable operator 
$A:S\to{\mathfrak M}(V \times V^*)$ can be equivalently represented in terms of its 
Fitzpatrick function $F_{{\cal A}_p}: L^p(S,V)\times L^q(S,V^*) \to\overline{\mathbb R}$, i.e.
\[Gr {\cal A}_p= \left\{[v, v^*] \in L^p(S,V)\times L^q(S,V^*)\mid F_{{\cal A}_p}(v, v^*)=\left< v, v^* \right>\right\}.\]
Again, we omit $p$ if no confusion occurs. Moreover, the following result holds (see \cite[Proposition 3.3]{Visintin2013b})
\begin{itemize}
\item the functional $F_{\cal A}$ is convex and lower semi-continuous;
\item for any $[v, v^*] \in L^p(S,V)\times L^q(S,V^*)$, the integral
\[F_{\cal A}(v, v^*)=\int_Sf_A(x, v(x), v^*(x))dx\]
exists either finite or equal to $+\infty$;
\item if there exists a pair $[v, v^*] \in L^p(S,V)\times L^q(S,V^*)$ such that $F_{\cal A}(v, v^*)<+\infty$, then
\[F^*_{\cal A}(v^*, v)=\int_Sf^*_A(x, v^*(x), v(x))dx\]
holds for all $[v^*, v] \in L^q(S,V^*)\times L^p(S,V)$.
\end{itemize}





\section{Existence of solutions}\label{Existence}

In this section we introduce and show the existence of weak solutions for
the initial boundary value \eq{MPr1} - \eq{MPr5}. To simplify the notations, throughout the whole section we ignore the fact the coefficients and the given functions in \eq{MPr1} - \eq{MPr5} depend
on $\omega\in\Omega$. The results proved below hold for a.e. $\omega\in\Omega$.
\paragraph{Solvability concept.} We start this section with the presentation of 
the intuitive ideas which lead to the definition of weak solutions for the initial boundary 
value problem  (\ref{MPr1}) - (\ref{MPr5}). To give a meaning for the solvability of
 problem (\ref{MPr1}) - (\ref{MPr5}) we are going to use
 the concept of Fitzpatrick functions defined in \eq {Fitzpatrick_function}.
 
 We assume first that a triple of functions $(u_\eta,\sigma_\eta,z_\eta)$ is given with the following properties:
for every $t\in (0,T_e)$
the function $(u_\eta(t),\sigma_\eta(t))$ is a weak solution of the boundary value problem 
\begin{eqnarray}
- \di_x \sigma_\eta(x,t) &=&  b(x,t), \label{LePr1}\\
\sigma_\eta(x,t) &=& {\mathbb C}_\eta[x](\varepsilon(\na_xu_\eta(x,t)) - Bz_\eta(x,t)), 
  \label{LePr2}\\
u_\eta(x,t) &=& 0, \hspace{6ex} x \in \partial {\bQ}.\label{LePr3}
\end{eqnarray}
This particularly holds for $z_\eta(0)=z_\eta^{(0)}$ and the 
corresponding initial values 
$(u_\eta(0),\sigma_\eta(0))=(u_\eta^{(0)},\sigma_\eta^{(0)})$. 
The equations (\ref{MPr3}) - (\ref{MPr5}) are satisfied pointwise
  for almost every $(x,t)$, and $b$ as well as $(u_\eta,\sigma_\eta,z_\eta)$ are smooth enough.
 Then, based on equivalence
\eq{Fitzpatrick_relation2}, we can rewrite equation \eq{MPr3} as follows
\[f_{g_\eta}\left(x, B^T\sigma_\eta(x,t)-L_\eta[x]z_\eta(x,t), \partial_tz_\eta(x,t)\right)\]\[
=\left(B^T\sigma_\eta(x,t)-L_\eta[x]z_\eta(x,t), \partial_tz_\eta(x,t)\right),\]
which holds for almost every  $(x,t)\in {\bQ}\times(0, T_e)$. Integrating the last equality over ${\bQ}$ gives
 \begin{eqnarray}\label{Eqiv_Fitzpatrick_Eq_1}
 \int_{\bQ}f_{g_\eta}\left(x, B^T\sigma_\eta-L_\eta z_\eta, \partial_tz_\eta\right)dx
=\int_{\bQ}\left(B^T\sigma_\eta-L_\eta z_\eta, \partial_tz_\eta\right)dx.
\end{eqnarray}
Using (\ref{MPr1}), (\ref{MPr2}) and (\ref{MPr4}) the right hand side in \eq{Eqiv_Fitzpatrick_Eq_1}
becomes (${\mathbb A}_\eta:={\mathbb C}_\eta^{-1}$)
 \begin{eqnarray}\label{Eqiv_Fitzpatrick_Eq_2}
 \int_{\bQ}\left(B^T\sigma_\eta-L_\eta z_\eta, \partial_tz_\eta\right)dx= 
 \left(B^T\sigma_\eta, \partial_tz_\eta\right)_{\bQ}-\frac12\frac{d}{dt}\left\|L^{1/2}_\eta z_\eta\right\|_{\bQ}^2
 \non\\
 = \left(\sigma_\eta, \ve(\partial_t\na_xu_\eta)\right)_{\bQ}-
 \left({\mathbb A}_\eta\sigma_\eta, \partial_t\sigma_\eta\right)_{\bQ}-\frac12\frac{d}{dt}\left\|L^{1/2}_\eta z_\eta\right\|_{\bQ}^2
 \non\\
 = \left(b, \partial_tu_\eta\right)_{\bQ}
-\frac12\frac{d}{dt}\left\{\left\|{\mathbb A}^{1/2}_\eta \sigma_\eta\right\|_{\bQ}^2+\left\|L^{1/2}_\eta z_\eta\right\|_{\bQ}^2\right\}.
\end{eqnarray}
Integrating relations \eq{Eqiv_Fitzpatrick_Eq_1} and \eq{Eqiv_Fitzpatrick_Eq_2} with respect to $t$
leads to
 \begin{eqnarray}\label{Eqiv_Fitzpatrick_Eq_3}
 &&\int_{\bQ} \left({\mathbb A}_\eta[x]\sigma_\eta(x,t), \sigma_\eta(x,t)\right)dx+
\int_{\bQ} \left(L_\eta[x]z_\eta(x,t), z_\eta(x,t)\right)dx
 \non\\
&&+\int_0^t\int_{\bQ}f_{g_\eta}\left(x, B^T\sigma_\eta(x,\tau)-L_\eta[x]z_\eta(x,\tau), \partial_\tau z_\eta(x,\tau)\right)dxd\tau\\
&& =\int_{\bQ} \left({\mathbb A}_\eta[x]\sigma_\eta(x,0), \sigma_\eta(x,0)\right)dx+
\int_{\bQ} \left(L_\eta[x]z^{(0)}_\eta(x), z^{(0)}_\eta(x)\right)dx+ \left(b, \partial_\tau u_\eta\right)_{{\bQ}_t}.\non
\end{eqnarray}
Taking into account the inequality \eq{Fitzpatrick_relation1}, we conclude that the triple
of functions $(u_\eta,\sigma_\eta,z_\eta)$ satisfies
equality \eq{Eqiv_Fitzpatrick_Eq_3} if and only if the inequality
\begin{eqnarray}\label{Eqiv_Fitzpatrick_Ineq}
 &&\int_{\bQ} \left({\mathbb A}_\eta[x]\sigma_\eta(x,t), \sigma_\eta(x,t)\right)dx+
\int_{\bQ} \left(L_\eta[x]z_\eta(x,t), z_\eta(x,t)\right)dx
 \non\\
&&+\int_0^t\int_{\bQ}f_{g_\eta}\left(x, B^T\sigma_\eta(x,\tau)-L_\eta[x]z_\eta(x,\tau), \partial_\tau z_\eta(x,\tau)\right)dxd\tau\\
&& \le\int_{\bQ} \left({\mathbb A}_\eta[x]\sigma^{(0)}_\eta(x), \sigma^{(0)}_\eta(x)\right)dx+
\int_{\bQ} \left(L_\eta[x]z^{(0)}_\eta(x), z^{(0)}_\eta(x)\right)dx+ \left(b, \partial_\tau u_\eta\right)_{{\bQ}_t}\non
\end{eqnarray}
holds for all $t\in(0, T_e)$ and some function $\sigma^{(0)}_\eta\in L^2({\bQ}, {\cal S}^3)$
solving the elliptic boundary value problem \eq{LePr1} - \eq{LePr3}.

The above computations suggest the following notion of weak solutions for 
the initial boundary value problem (\ref{MPr1}) - (\ref{MPr5}).
\begin{Def}\label{WeakSol} Let the numbers $p, q$ satisfy $1 < q\le 2\le p < \infty, 1/p + 1/q = 1.$
 A function $(u_\eta,\sigma_\eta,z_\eta)$ such that
\[(u_\eta,\sigma_\eta)\in W^{1,q}(0,T_e; W^{1,q}_0({\bQ}, {\mathbb R}^3) \times
 L^{q} ({\bQ}, {\cal S}^3)),\] \[ z_\eta\in   
W^{1,q}(0,T_e; L^q({\bQ},{\mathbb R}^N)),\ \ 
\Sigma_\eta:=B^T\sigma_\eta-L_\eta z_\eta\in L^p({\bQ}_{T_e}, {\mathbb R}^N)\]
with
\[(\sigma_\eta,L^{1/2}_\eta z_\eta)\in L^\infty(0,T_e;L^2({\bQ},{\cal S}^3
\times {\mathbb R}^N))\]
is called a {\rm weak solution} of the initial boundary value
problem (\ref{MPr1}) - (\ref{MPr5}), if for every $t\in (0,T_e)$
the function $(u_\eta(t),\sigma_\eta(t))$ is a weak solution of the boundary value problem
(\ref{MPr1}) - (\ref{MPr2}), (\ref{MPr4}) 
for every given $Bz_\eta(t)\in L^q({\bQ},{\cal S}^3)$, the initial condition (\ref{MPr5}) is satisfied
pointwise for almost every $(x,t)$
 and the inequality (\ref{Eqiv_Fitzpatrick_Ineq}) holds for all $t\in(0, T_e)$
and the function $\sigma_\eta^{(0)}\in L^2({\bQ}, {\cal S}^3)$ determined by equations
\eq{LePr1} - \eq{LePr3}.
\end{Def}
Now, we show that the above definition of weak solutions for (\ref{MPr1}) - (\ref{MPr5})
is consistent. Namely, we are going to prove that if a triple of functions $(u_\eta,\sigma_\eta,z_\eta)$
is a weak solution of (\ref{MPr1}) - (\ref{MPr5}) in the sense of Definition~\ref{WeakSol} and possesses additional regularity, 
then this triple of functions is a solution of the initial boundary value problem 
(\ref{MPr1}) - (\ref{MPr5}), i.e. the constitutive inclusion (\ref{MPr3}) is satisfied pointwise
 for a.e. $(x,t)\in {\bQ}_{T_e}$. 
To this end, we assume that the weak solution $(u_\eta,\sigma_\eta,z_\eta)$ has the following regularity
\[(u_\eta,\sigma_\eta)\in W^{1,1}(0,T_e; H^1_0({\bQ}, {\mathbb R}^3) \times
 L^2 ({\bQ}, {\cal S}^3)),\] \[ z_\eta\in   
W^{1,1}(0,T_e; L^2({\bQ},{\mathbb R}^N)).\]
Then, it is immediately seen that the function $\sigma_\eta^{(0)}\in L^2({\bQ}, {\cal S}^3)$ as a unique solution of the problem \eq{LePr1} - \eq{LePr3} satisfies the relation 
$\sigma^{(0)}_\eta(x)=\sigma_\eta(x, 0)$ for a.e. $x\in {\bQ}$ and the following identity
\[
\big({\mathbb A}_\eta\sigma_\eta(t),\sigma_\eta(t)\big)_{\bQ}-
\big({\mathbb A}_\eta\sigma_\eta^{(0)},\sigma_\eta^{(0)}\big)_{\bQ}=
\int_{{\bQ}_t}\frac{\partial}{\partial \tau}
\left({\mathbb A}_\eta\sigma_\eta(x,\tau), \sigma_\eta(x,s)\right)dsdx\]
Moreover, we have that
\[
\big\|L^{1/2}_\eta z_\eta(t)\big\|^2_{\bQ}-
\big\|L^{1/2}_\eta z_\eta^{(0)}\big\|^2_{\bQ}=
\int_0^t\frac{\partial}{\partial \tau}
\|L^{1/2}_\eta z_\eta(\tau)\|^2_{\bQ}d\tau.\]
Then, the inequality \eq{Eqiv_Fitzpatrick_Ineq}
 can be rewritten as follows
\[\int_{{\bQ}_t}\Big(({\mathbb A}_\eta\partial_\tau\sigma_\eta,\sigma_\eta)+
(L_\eta z_\eta,\partial_\tau z_\eta)
+f_{g_\eta}\left(x, B^T\sigma_\eta-L_\eta z_\eta, \partial_\tau z_\eta\right)\Big)d\tau dx\le
(b,\partial_\tau u_\eta)_{{\bQ}_t}.\]
Handling the equations (\ref{MPr1}) - (\ref{MPr2}) as above
we obtain that the last inequality takes the following form
\[\int_{{\bQ}_t}\Big(
(L_\eta z_\eta,\partial_\tau z_\eta)
+f_{g_\eta}\left(x, B^T\sigma_\eta-L_\eta z_\eta, \partial_\tau z_\eta\right)\Big)d\tau dx\le
(B^T\sigma_\eta,\partial_\tau z_\eta)_{{\bQ}_t}.\]
or, equivalently,
\[\int_{{\bQ}_t}f_{g_\eta}\left(x, B^T\sigma_\eta-L_\eta z_\eta, \partial_\tau z_\eta\right)d\tau dx\le
\int_{{\bQ}_t}(B^T\sigma_\eta-L_\eta z_\eta,\partial_\tau z_\eta)dxd\tau.\]
Therefore, by \eq{Fitzpatrick_relation1} and the standard localization argument we get that
\[f_{g_\eta}\left(x, B^T\sigma_\eta(x,t)-L_\eta[x]z_\eta(x,t), \partial_tz_\eta(x,t)\right)\]\[
=\left(B^T\sigma_\eta(x,t)-L_\eta[x]z_\eta(x,t), \partial_tz_\eta(x,t)\right),\]
which holds for a.e. $(x,t)\in {\bQ}_{T_e}$. Now, based on the
equivalence result  \eq{Fitzpatrick_relation2}
we conclude that the inclusion (\ref{MPr3}) is satisfied pointwise
from the assumed temporal regularity of $(u_\eta,\sigma_\eta,z_\eta)$.
The pointwise meaning of (\ref{MPr5}) follows.

\paragraph{Existence result.} First, we define a class of maximal monotone functions we deal with
in this work.
\begin{Def}\label{CoercClass}
Let $S$ be a measurable set in ${\mathbb R}^s$ and $m\in L^1(S,\mathbb{R})$. 
For $\alpha_1, \alpha_2\in{\mathbb R}_{+}$, 
${\cal M}(S,{\mathbb R}^k,\alpha_1, \alpha_2,m)$ is the set of measurable
multi-valued functions $h:S \to{\mathfrak M}({\mathbb R}^k\times{\mathbb R}^k)$
(in the sense of Definition~\ref{def:meas-max-monotone}) such that with the following inequality
\begin{eqnarray}
\label{inequMain} 
(v,v^*)\ge m(x)+\alpha_1|v^*|^q+
 \alpha_2|v|^p
\end{eqnarray}
holds  for a.e. $x\in S$ and every $v^*\in h(x,v)$, 
where $p$ and $q$ satisfy the relations $2\le p<\infty$ and $q=p/(p-1)$.
\end{Def}
The main properties of the class ${\cal M}(S,{\mathbb R}^k,\alpha_1, \alpha_2,m)$ are
collected in the following proposition (see \cite[Corollary 2.15]{Damlamian07}).
\begin{Prop}\label{MainClassMaxMonoProp}
Let ${\cal H}$ be a canonical extension of a function $h:S \to{\mathfrak M}({\mathbb R}^k\times{\mathbb R}^k)$
in the sense of Definition \ref{CanExtension},
which belongs to
    ${\cal M}(S,{\mathbb R}^k,\alpha_1, \alpha_2,m)$. Then ${\cal H}$
  is maximal monotone, surjective and $D({\cal H})=L^p(S,{\mathbb R}^k)$.
     \end{Prop}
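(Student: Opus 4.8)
The plan is to derive all three conclusions---maximal monotonicity, surjectivity, and $D(\mathcal{H})=L^p(S,\mathbb{R}^k)$---from the coercivity estimate \eq{inequMain} together with the abstract facts already collected in Section~\ref{BasicsConAna}, in particular the Fitzpatrick-function representation of canonical extensions and the standard surjectivity theorem for coercive maximal monotone operators on reflexive Banach spaces. First I would recall that $L^p(S,\mathbb{R}^k)$ is reflexive for $1<p<\infty$ with dual $L^q(S,\mathbb{R}^k)$, so the abstract machinery for maximal monotone operators applies. Since $h\in\mathcal{M}(S,\mathbb{R}^k,\alpha_1,\alpha_2,m)$ is measurable, I would observe that $Gr\,\mathcal{H}\neq\emptyset$: one builds an element by taking a measurable selection $x\mapsto(v_0(x),v_0^*(x))$ of $Gr\,h(x)$ (which exists by Remark~\ref{RemMeasurabilityMultis}), noting that the coercivity bound forces the selected $v_0, v_0^*$ to lie in $L^p$ and $L^q$ respectively once one starts from a bounded measurable selection and truncates, so that $\mathcal{H}$ is nonempty; then Proposition above (the one citing \cite[Proposition 2.13]{Damlamian07}) gives that $\mathcal{H}$ is maximal monotone. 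This settles the first claim.

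For surjectivity and the full domain, the key point is coercivity of $\mathcal{H}$ as a graph in $L^p\times L^q$. Integrating \eq{inequMain} over $S$, for any $[v,v^*]\in Gr\,\mathcal{H}$ one gets
\[
\langle v^*,v\rangle = \int_S (v^*(x),v(x))\,dx \ge \int_S m(x)\,dx + \alpha_1\|v^*\|_{q,S}^q + \alpha_2\|v\|_{p,S}^p,
\]
so that $\langle v^*,v\rangle/\|v\|_{p,S}\to+\infty$ as $\|v\|_{p,S}\to\infty$ (using $p\ge 2$ and that the $\|v\|_{p,S}^p$ term dominates the constant $\int_S m$), i.e.\ $\mathcal{H}$ is coercive. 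A maximal monotone coercive operator on a reflexive Banach space is surjective---this is the classical Browder--Minty-type theorem (see, e.g., \cite[Theorem II.2.2]{Barb76} or \cite{Pas78}); applying it to $\mathcal{H}$ gives $R(\mathcal{H})=L^q(S,\mathbb{R}^k)$. I would phrase this cleanly using that $\mathcal{H}$ maximal monotone and coercive implies $\mathcal{H}+\lambda J$ is surjective for the duality map $J$ and all $\lambda>0$, then pass $\lambda\to 0$ with the coercivity a priori bound.

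For $D(\mathcal{H})=L^p(S,\mathbb{R}^k)$, I would argue that the coercivity estimate also bounds $v^*$ in terms of $v$: from \eq{inequMain}, by Young's inequality $(v,v^*)\le \frac{1}{q}\delta^{-q}\ldots$ one extracts $\alpha_1|v^*|^q \le (v,v^*) - m(x) - \alpha_2|v|^p \le \tfrac12|v|^p + C_\delta|v^*|^q - m(x) - \alpha_2|v|^p$ type bounds, but more directly: the pointwise inequality means each $h(x,\cdot)$ is itself coercive as an operator on $\mathbb{R}^k$ with modulus independent of $x$ up to the integrable function $m$, hence $D(h(x,\cdot))=\mathbb{R}^k$ for a.e.\ $x$; then for any $v\in L^p(S,\mathbb{R}^k)$ one selects measurably $v^*(x)\in h(x,v(x))$ of minimal norm, and \eq{inequMain} gives $\alpha_1\|v^*\|_{q,S}^q \le \int_S(v(x),v^*(x))\,dx - \int_S m - \alpha_2\|v\|_{p,S}^p$; combined with $(v(x),v^*(x))\le |v(x)||v^*(x)|$ and Young's inequality this yields $\|v^*\|_{q,S}<\infty$, so $v\in D(\mathcal{H})$. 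The main obstacle I anticipate is the measurable-selection bookkeeping: one must be careful that the minimal-norm selection $x\mapsto v^*(x)$ from $h(x,v(x))$ is genuinely measurable (this follows from the measurability of $x\mapsto h(x,v(x))$ via Lemma in the excerpt citing \cite[Lemma 3.1]{Visintin2013b}, together with measurability of the projection onto a closed convex set), and that the absorption of the constant $\int_S m$ and the cross term via Young's inequality is done with compatible exponents $p,q$---these are routine but must be spelled out to conclude $v^*\in L^q$ rather than merely finite a.e. Everything else reduces to citing the abstract results already available.
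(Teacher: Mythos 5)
The paper offers no proof of this proposition; it is imported verbatim as a citation to Damlamian's Corollary 2.15, so there is no in-paper argument to compare against. Your reconstruction is structurally the right one and is essentially how the cited result is established: nonempty graph plus Damlamian's Proposition 2.13 for maximal monotonicity, integrated coercivity plus Browder--Minty for surjectivity, and pointwise coercivity plus measurable selection for the full domain, with the measurability bookkeeping delegated to the Visintin-type lemma on $x\mapsto h(x,v(x))$.

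One step is stated with the wrong reason and needs repair before it closes. You write that "the pointwise inequality means each $h(x,\cdot)$ is coercive \dots\ hence $D(h(x,\cdot))=\mathbb{R}^k$." Coercivity of a maximal monotone operator on a reflexive space yields \emph{surjectivity}, i.e.\ $R(h(x))=\mathbb{R}^k$, not a full domain. What actually forces $D(h(x))=\mathbb{R}^k$ is that \eq{inequMain} is symmetric in $(v,v^*)$: read as a property of the inverse $h(x)^{-1}$ it gives $(v,v^*)\ge m(x)+\alpha_1|v^*|^q$ for every $v\in h(x)^{-1}(v^*)$, so $(v,v^*)/|v^*|\ge m(x)/|v^*|+\alpha_1|v^*|^{q-1}\to\infty$ as $|v^*|\to\infty$, whence $h(x)^{-1}$ is itself coercive and therefore surjective, i.e.\ $D(h(x))=R(h(x)^{-1})=\mathbb{R}^k$. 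Once this is in place, a single Young-inequality estimate of the form $|v^*(x)|^q\le C_1|v(x)|^p+C_2|m(x)|$, valid for any measurable selection $v^*(x)\in h(x,v(x))$ and any $v\in L^p(S,\mathbb{R}^k)$, simultaneously proves $D(\mathcal{H})=L^p(S,\mathbb{R}^k)$ and (by taking one fixed $v$) that $Gr\,\mathcal{H}\ne\emptyset$. The somewhat vague "bounded measurable selection and truncates" argument you offer for nonemptiness is then unnecessary, and the minimal-norm selection is overkill since the pointwise bound holds for every selection, not just the one of least norm.
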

Now, we can state the main result of this section.
\begin{Theor}\label{ExResultPositiveSemiDef} 
 Assume that $L_\eta$ is positive semi-definite, ${\mathbb C}_\eta$ is uniformly positive definite
  and ${\mathbb C}_\eta\in C(\bar{\bQ}, {\cal L}({\cal S}^3,{\cal S}^3))$,
the mappings
$g_\eta\in {\cal M}({\bQ},{\mathbb R}^N,\alpha_1, \alpha_2,m)$ with
a function $m$ from $L^1({\bQ}, {\mathbb R})$. Suppose that $b \in W^{1,p}(0,T_e; W^{-1,p}({\bQ}, {\mathbb R}^3))$ and
$z_\eta^{(0)}\in L^2({\bQ}, {\mathbb R}^N)$.

Then the initial boundary value
problem (\ref{MPr1}) - (\ref{MPr5}) has at least one weak solution $(u_\eta,T_\eta,z_\eta)$ in the sense of Definition~\ref{WeakSol}. 
\end{Theor}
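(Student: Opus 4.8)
The plan is to convert the coupled system (\ref{MPr1})--(\ref{MPr5}) into a single evolution inclusion governed by a maximal monotone operator on a suitable Hilbert space, and then apply the standard existence theory for such inclusions (Galerkin approximation / Yosida regularization, or the Fitzpatrick-function variational framework recalled in Section~\ref{BasicsConAna}) together with the a priori estimates that the energy identity \eq{Eqiv_Fitzpatrick_Eq_3} dictates. First I would eliminate $u_\eta$ and $\sigma_\eta$ in favour of $z_\eta$: for each fixed $t$, given $z_\eta(t)\in L^q({\bQ},{\mathbb R}^N)$, the elliptic system (\ref{LePr1})--(\ref{LePr3}) has a unique weak solution $(u_\eta(t),\sigma_\eta(t))$ depending linearly and boundedly on the data $(b(t),Bz_\eta(t))$; this uses the uniform positive definiteness of ${\mathbb C}_\eta$ and Korn's inequality on $\bQ$. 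Writing $\sigma_\eta = \Sigma_{\mathrm{el}}(Bz_\eta) + \Sigma_b(b)$, the quantity $\Sigma_\eta := B^T\sigma_\eta - L_\eta z_\eta$ becomes an affine, bounded, self-adjoint (with respect to the $L^2$ inner products appearing in \eq{Eqiv_Fitzpatrick_Eq_2}) function of $z_\eta$ plus a forcing term, and (\ref{MPr3}) reads $\partial_t z_\eta \in g_\eta(\cdot, \Sigma_\eta(z_\eta,t))$.

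Next I would set up the functional-analytic frame. Take $V=L^p({\bQ},{\mathbb R}^N)$, $V^*=L^q({\bQ},{\mathbb R}^N)$, and let $\mathcal{G}$ be the canonical extension of $g_\eta$ in the sense of Definition~\ref{CanExtension}; by Proposition~\ref{MainClassMaxMonoProp}, since $g_\eta\in{\cal M}({\bQ},{\mathbb R}^N,\alpha_1,\alpha_2,m)$, the operator $\mathcal{G}$ is maximal monotone, surjective and everywhere defined, with the coercivity bound $(v,v^*)_{\bQ}\ge \int_{\bQ}m + \alpha_1\|v^*\|_{q,{\bQ}}^q + \alpha_2\|v\|_{p,{\bQ}}^p$ for $v^*\in\mathcal{G}v$. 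The evolution inclusion then has the structure $\partial_t z_\eta \in \mathcal{G}\big(\Lambda z_\eta + \ell(t)\big)$ with $\Lambda$ a bounded linear self-adjoint operator and $\ell\in W^{1,p}(0,T_e;V^*)$ coming from $b$; composing with the (single-valued, monotone, Lipschitz) affine map and using that $L_\eta$ is only positive semi-definite means one should work with the degenerate bilinear form $\tfrac12\frac{d}{dt}\{\|{\mathbb A}_\eta^{1/2}\sigma_\eta\|^2+\|L_\eta^{1/2}z_\eta\|^2\}$ as the natural ``energy'', exactly as in \eq{Eqiv_Fitzpatrick_Eq_3}. I would regularize: replace $L_\eta$ by $L_\eta + \delta\,\mathrm{Id}$ (and, if needed, mollify $g_\eta$ via its Yosida approximation $g_{\eta,\lambda}$) to obtain a nondegenerate, Lipschitz-in-$z$ problem whose solvability in $W^{1,2}(0,T_e;L^2)$ follows from classical results on monotone evolution equations (e.g.\ Brezis / Showalter), then pass to the limit $\delta,\lambda\to 0$.

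The a priori estimates are the heart of the passage to the limit: testing the regularized inclusion and invoking the coercivity bound \eq{inequMain} gives, for the approximate solutions, a bound on $\int_0^t\int_{\bQ} f_{g_\eta}(\cdot)\,\ge\, \int_0^t\big(\alpha_1\|\partial_\tau z_\eta\|_{q,{\bQ}}^q + \alpha_2\|\Sigma_\eta\|_{p,{\bQ}}^p\big)d\tau$ minus lower-order terms, which together with the Gronwall-type control coming from $(b,\partial_\tau u_\eta)_{{\bQ}_t}$ (handled by Young's inequality using $b\in W^{1,p}(0,T_e;W^{-1,p})$ after an integration by parts in $t$) yields: $(u_\eta,\sigma_\eta)$ bounded in $W^{1,q}(0,T_e;W^{1,q}_0\times L^q)$, $z_\eta$ bounded in $W^{1,q}(0,T_e;L^q)$, $\Sigma_\eta$ bounded in $L^p({\bQ}_{T_e})$, and $(\sigma_\eta,L_\eta^{1/2}z_\eta)$ bounded in $L^\infty(0,T_e;L^2)$ — precisely the regularity demanded in Definition~\ref{WeakSol}. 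Extract weakly(-$*$) convergent subsequences; the linear elliptic relations (\ref{MPr1})--(\ref{MPr2}), (\ref{MPr4}) pass to the limit trivially, and the inclusion (\ref{MPr3}) is recovered in the integrated Fitzpatrick form by lower semicontinuity of the convex functional $F_{\mathcal{G}}$ (from the last bullet of Section~\ref{BasicsConAna}, $F_{\mathcal{G}}(v,v^*)=\int_{\bQ}f_{g_\eta}(x,v,v^*)\,dx$ is convex and weakly l.s.c.) combined with the ``$\limsup\le$'' on the duality-pairing side obtained from the weak convergence of $\partial_t z_\eta$ against the strongly convergent energy terms. The main obstacle I anticipate is exactly this last step under the merely positive-semidefinite hardening $L_\eta$: the term $\tfrac12\frac{d}{dt}\|L_\eta^{1/2}z_\eta\|^2$ controls only $L_\eta^{1/2}z_\eta$, not $z_\eta$ itself in $L^\infty(0,T_e;L^2)$, so one cannot directly get strong convergence of $z_\eta(t)$ at fixed times; the resolution is to keep the argument purely in the integrated inequality \eq{Eqiv_Fitzpatrick_Ineq}, use the $L^\infty$-bound on $\sigma_\eta$ and the $L^p$-bound on $\Sigma_\eta$ to control $L_\eta z_\eta = B^T\sigma_\eta - \Sigma_\eta$, and verify \eq{Eqiv_Fitzpatrick_Ineq} in the limit via Fatou/weak-l.s.c. on the $f_{g_\eta}$ term and weak-$*$ l.s.c. of the quadratic energies on the left, while the right-hand side converges because $\sigma^{(0)}_\eta,z^{(0)}_\eta$ are fixed data and $(b,\partial_\tau u_\eta)_{{\bQ}_t}$ converges by weak convergence of $\partial_\tau u_\eta$ against the fixed $b$. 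Finally I would check the consistency/attainment of the initial condition (\ref{MPr5}) using the $W^{1,q}(0,T_e;L^q)$-bound on $z_\eta$, which gives continuity in time into $L^q$ and hence a well-defined trace at $t=0$ equal to $z_\eta^{(0)}$.
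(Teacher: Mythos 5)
Your overall strategy --- reduce the system to an evolution inclusion for $z_\eta$ alone via the elliptic solution operator, regularize to obtain a solvable approximate problem, derive uniform a priori estimates from the coercivity in Definition~\ref{CoercClass} and the quadratic energy structure \eq{Eqiv_Fitzpatrick_Eq_2}--\eq{Eqiv_Fitzpatrick_Eq_3}, and close by weak lower semicontinuity of the Fitzpatrick functional in the integrated inequality \eq{Eqiv_Fitzpatrick_Ineq} --- is sound and reaches the same conclusion the paper does, but the approximation step is genuinely different. The paper uses Rothe time-discretization: it introduces the implicit Euler scheme \eq{CurlPr1Dis}--\eq{CurlPr6Dis} with a built-in regularization term $\tfrac{1}{m}z^n_m$ (your $\delta\,\mathrm{Id}$ in disguise, with $\delta$ tied to the time step so that only a single limit $m\to\infty$ is required), solves stationary problems at each slice, derives discrete energy estimates, and passes to the limit in the Fitzpatrick form \eq{ConvergenceFitzpatrick1}. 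You instead propose Yosida regularization of $g_\eta$ plus $L_\eta\mapsto L_\eta+\delta\,\mathrm{Id}$, solve the resulting Lipschitz evolution by classical theory, and then perform two successive limits $\lambda\to0$, $\delta\to0$. Both routes work; Rothe is arguably more elementary here since it avoids verifying that the Yosida approximations remain uniformly in the class ${\cal M}({\bQ},{\mathbb R}^N,\alpha_1,\alpha_2,m)$ and sidesteps the $L^p$--$L^q$ duality obstruction to applying off-the-shelf $L^2$ monotone-evolution theory directly. One detail you should make explicit: the estimate $\|u_\eta\|_{1,q,\bQ}\leq C(\|b\|_{q,\bQ}+\|z_\eta\|_{q,\bQ})$ needed to bound $u_\eta$ in $W^{1,q}(0,T_e;W^{1,q}_0({\bQ},{\mathbb R}^3))$ with $q<2$ requires $L^p$-regularity theory for linear elliptic systems (the paper invokes \cite{Giusti2003}, which is precisely why ${\mathbb C}_\eta$ is assumed continuous); Korn's inequality and Lax--Milgram alone yield only $H^1$-solvability and do not suffice when $z_\eta$ is controlled merely in $L^q$.
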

\begin{Rem}
We point out that the requirement of the continuity of ${\mathbb C}_\eta$ is superfluous and is only made to simplify the proof of Theorem~\ref{ExResultPositiveSemiDef}. The proof itself works for the measurable function ${\mathbb C}_\eta$ as well. The continuity assumption allows us
 to apply the $L^p$-regularity theory for linear elliptic systems in \cite{Giusti2003} directly to our problem.  In case of ${\mathbb C}_\eta\in L^\infty({\bQ}, {\cal L}({\cal S}^3,{\cal S}^3))$, 
 some extra technical work
 has to be done before one can use the $L^p$-regularity theory for linear elliptic systems 
 (this strategy is realized in \cite{NesenenkoNeff2012}). 
 To avoid the technicalities we assume the continuity of ${\mathbb C}_\eta$ here.
\end{Rem}
\begin{proof} To simplify the notations we drop $\eta$.
The proof of the theorem is presented in \cite{Nesenenko12a}.
Therefore, we only sketch it here. We show this by
 the Rothe method (a time-discretization
method, see \cite{Roubi05} for details).  
In order to introduce a time-discretized problem, let us fix any
$m\in{\mathbb N}$ and set
\[h=h_m:=\frac{T_e}{2^m}, \ z^0_m:=z^{(0)},\ b^n_m:=
\frac{1}{h}\int^{nh}_{(n-1)h}b(s)ds\in 
W^{-1,p}({\bQ}, {\mathbb R}^3),\ \ n=1,...,2^m. \]
We are looking for functions $u^n_m\in H^1_0({\bQ},{\mathbb R}^3)$,
$\sigma^n_m\in L^2({\bQ},{\cal S}^3)$ and $z^n_m\in
 L^2({\bQ},{\mathbb R}^N)$ with 
\[\Sigma_{n,m}:=B^T\sigma^n_m-
\frac{1}{m}z^n_m-L z^n_m\in L^p({\bQ},{\mathbb R}^N)\]
solving the following problem  
\begin{eqnarray}
- \di_x \sigma^n_m(x) &=&  b^n_m(x), \label{CurlPr1Dis}
\\[1ex]
\sigma^n_m(x) &=& {\mathbb C}[x](\varepsilon(\na_xu^n_m(x)) - Bz^n_m(x)), 
  \label{CurlPr2Dis}
\\[1ex] 
\label{CurlPr3Dis} \frac{z^n_m(x)-z^{n-1}_m(x)}{h} & \in & 
g\big(x,\Sigma_{n,m}(x)\big),\label{microPr3Dis} 
\end{eqnarray}
together with the boundary conditions 
\begin{eqnarray} 
u^n_m(x) &=& 0, \quad x \in \partial{\bQ}\,.\label{CurlPr6Dis}
\end{eqnarray}
The proof of the existence of the triple $$(u^n_m,
\sigma^n_m, z^n_m)\in H_0^1({\bQ}, {\mathbb R}^3)\times L^2({\bQ},{\cal S}^3)\times
 L^2({\bQ},{\mathbb R}^N)$$ satisfying
 \eq{CurlPr1Dis} - \eq{CurlPr6Dis} can be found in \cite{Nesenenko12a}.\vspace{1ex}\\
{\bf A-priori estimates.} 
Multiplying (\ref{CurlPr1Dis}) by $(u^n_m-u^{n-1}_m)/h$ and then integrating
over ${\bQ}$ we get
\begin{eqnarray}
\big(\sigma^n_m, \varepsilon(\na_x(u^n_m-u^{n-1}_m))/h\big)_{\bQ}=
\left(b^n_m,(u^n_m-u^{n-1}_m)/h\right)_{\bQ}. \label{CurlPr2Disestimate}
\end{eqnarray}
Applying $g^{-1}(x)$ to both sides of (\ref{microPr3Dis}), 
multiplying by $w^n_m:=(z^n_m-z^{n-1}_m)/h$ and
then integrate over ${\bQ}$ to obtain
\[\int_{\bQ} \big(g^{-1}(w^n_m),w^n_m\big)dx=
(\sigma^{n}_m,Bw^n_m)_{\bQ}
-\frac{1}{mh}\Big(z^n_m-z^{n-1}_m, z^n_m\Big)_{\bQ}
-\frac{1}{h}\Big(z^n_m-z^{n-1}_m, Lz^n_m\Big)_{\bQ}.\]
With (\ref{CurlPr2Disestimate}) we get that
\[\frac{1}{h}\Big({\mathbb C}^{-1}\sigma^n_m,
\sigma^n_m-\sigma^{n-1}_m\Big)_{\bQ}+ \frac{1}{h}
\Big(L^{1/2}(z^n_m-z^{n-1}_m), L^{1/2}z^n_m\Big)_{\bQ}\]
\[
+\frac{1}{m} \frac{1}{h}\Big(z^n_m-z^{n-1}_m, z^n_m\Big)_{\bQ}
+\int_{\bQ} \big({g}^{-1}(w^n_m),w^n_m\big)dx
=\frac{1}{h}\left(b^n_m,u^n_m-u^{n-1}_m\right)_{\bQ}.\]
Multiplying by $h$ and summing the obtained relation for $n=1,...,l$ 
for any fixed $l\in[1,2^m]$ we derive the following inequality (${\mathbb A}={\mathbb C}^{-1}$)
\begin{eqnarray}
&& \frac{1}2\Big(
\|{\mathbb A}^{1/2}\sigma^l_m\|^2_{\bQ}+\|L^{1/2}z^l_m\|^2_{\bQ}
+\frac{1}{m}\|z^l_m\|^2_{\bQ}\Big)
+h\sum^l_{n=1}\int_{\bQ}\big({g}^{-1}(w^n_m),w^n_m\big)dx\non\\
&&\label{AprioriEstimHelp1}\hspace{15ex}
\le
C^{(0)}+
h\sum^l_{n=1}\left(b^n_m,\frac{u^n_m-u^{n-1}_m}h\right)_{\bQ},\label{AprioriEstimHelp11}
\end{eqnarray}
where 
\[2C^{(0)}:=\|{\mathbb A}^{1/2}\sigma^0_m\|^2_{\bQ}+\|L^{1/2} z^0_m\|^2_{\bQ}
+\frac{1}{m}\|z^0_m\|^2_{\bQ}.\]
We estimate now the right hand side of the last inequality. Since $u^n_m$
is a solution of the linear elliptic problem formed by the equations 
(\ref{CurlPr1Dis}), (\ref{CurlPr2Dis}) and
(\ref{CurlPr6Dis}), it satisfies (see \cite{Giusti2003}) the inequality
\begin{eqnarray}\label{AprioriEstimHelp2}
\|u^n_m\|_{1,q,{\bQ}}\le C\big(\|b^n_m\|_{q,{\bQ}}+\|z^n_m\|_{q,{\bQ}}\big),
\end{eqnarray}
where $C$ is a positive constant independent of $n$ and $m$. Therefore,
using the linearity of the problem formed by 
(\ref{CurlPr1Dis}), (\ref{CurlPr2Dis}) and (\ref{CurlPr6Dis}), the inequality
(\ref{AprioriEstimHelp2}) and Young's
inequality with $\epsilon>0$ we get that
\begin{eqnarray}\label{AprioriEstimHelp3} 
&&\left(b^n_m,\frac{u^n_m-u^{n-1}_m}h\right)_{\bQ}\le \|b^n_m\|_{p,{\bQ}}
\|({u^n_m-u^{n-1}_m})/h\|_{1,q,{\bQ}}\le
CC_\epsilon\|b^n_m\|^p_{p,{\bQ}}\non\\
&&+\epsilon C\|(b^n_m-b^{n-1}_m)/h\|^q_{q,{\bQ}}
+\epsilon C\|(z^n_m-z^{n-1}_m)/h\|^q_{q,{\bQ}},
\end{eqnarray}
where $C_\epsilon$ is a positive constant appearing in the Young
inequality.
Combining the inequalities (\ref{AprioriEstimHelp11}) and 
(\ref{AprioriEstimHelp3}), applying \eqref{Fitzpatrick_relation1} and \eq{inequMain}
and choosing an appropriate value for $\epsilon>0$ we obtain 
the following estimate
\begin{eqnarray}
&&\frac{1}2\Big(
\|{\mathbb A}^{1/2}\sigma^l_m\|^2_{\bQ}+\|L^{1/2}z^l_m\|^2_{\bQ}
+\frac{1}{m}\|z^l_m\|^2_{\bQ}\Big)+h\hat{C}_\epsilon\sum^l_{n=1}
\int_{\bQ}\Big|\frac{z^n_m-z^{n-1}_m}h\Big|^qdx\non\\
&&\hspace{10ex}\label{aprioriEstim1N} \le
C^{(0)}+h\tilde{C}_\epsilon\sum^l_{n=1}\Big(\|b^n_m\|^p_{p,{\bQ}}+
\|(b^n_m-b^{n-1}_m)/h\|^q_{q,{\bQ}}\Big),
\end{eqnarray}
where $\tilde{C}, \tilde{C}_\epsilon$ and $\hat{C}_\epsilon$ are some positive
constants. Now, using the definition
of Rothe's approximation functions (see \eqref{RotheConstantinterpolant}) we rewrite (\ref{aprioriEstim1N}) as follows
\begin{eqnarray}
&&
\|{\mathbb A}^{1/2}\bar \sigma_m(t)\|^2_{\bQ}+C_1\|L^{1/2}\bar z_m(t)\|^2_{\bQ}
+\frac{1}{m}\|\bar z_m(t)\|^2_{\bQ}
\label{aprioriEstim2N} \\
&& \hspace{3ex}+
2\hat{C}_\epsilon\int_0^{T_e}\int_{\bQ}\big|\partial_t z_m(x,t)\big|^qdxdt\le
2C^{(0)}+2\tilde{C}_\epsilon\|b\|^p_{W^{1,p}(0,T_e;L^p({\bQ},{\mathbb R}^3))}.\non
\end{eqnarray}
From the estimate (\ref{aprioriEstim2N}) we get then that
\begin{eqnarray}
&&\{z_m\}_m \ {\rm is}\  {\rm uniformly}\  {\rm bounded}\  {\rm in}
 \ W^{1,q}(0,{T_e};L^q({\bQ},{\mathbb R}^N)),\label{aprioriEstim3N}\\[1ex]
&&\{L^{1/2}\bar z_m\}_m \ {\rm is}\  {\rm uniformly}\  {\rm bounded}\  {\rm in}
 \ L^\infty(0,{T_e};L^{2}({\bQ},{\mathbb R}^N)),\label{aprioriEstim3aa}\\[1ex]
&&\{\bar \sigma_m\}_m\ {\rm is}\  {\rm uniformly}\  {\rm bounded}\  {\rm in}
 \ L^\infty(0,{T_e};L^{2}({\bQ},{\cal S}^3)),\label{aprioriEstim3a}\\[1ex]
&&\left\{\frac1{\sqrt{m}} \bar z_m\right\}_m\ {\rm is}\  {\rm uniformly}\  {\rm bounded}\  {\rm in}
 \ L^\infty(0,{T_e};L^{2}({\bQ},{\mathbb R}^N)).\label{aprioriEstim3b}
\end{eqnarray}
In particular, the uniform boundness of the sequences in 
(\ref{aprioriEstim3N}) - (\ref{aprioriEstim3b}) yields 
\begin{eqnarray}
&&\left\{\bar\Sigma_{m}\right\}_m\ {\rm is}\ 
 {\rm uniformly}\  {\rm bounded}\  {\rm in}
 \ L^p(0,{T_e};L^p({\bQ},{\mathbb R}^N)),\label{aprioriEstim4N}\\[1ex]
&&\{u_m\}_m \ {\rm is}\  {\rm uniformly}\  {\rm bounded}\  {\rm in}
 \  W^{1,q}(0,{T_e};W^{1,q}_0({\bQ},{\mathbb R}^3)).\label{aprioriEstim5N}
\end{eqnarray}
Employing (\ref{RotheEstim}), the estimates 
(\ref{aprioriEstim3aa}) - (\ref{aprioriEstim4N}) further imply that the
 sequnces  $\{\sigma_m\}_m$, $\{L^{1/2} z_m\}_m$,
 $\left\{ z_m/\sqrt{m}\right\}_m$ and 
$\left\{\Sigma_{m}\right\}_m$ are also
uniformly bounded in the spaces $L^\infty(0,{T_e};L^{2}({\bQ},{\cal S}^3))$,
$L^\infty(0,{T_e};L^{2}({\bQ},{\mathbb R}^N))$, 
$L^\infty(0,{T_e};L^{2}({\bQ},{\mathbb R}^N))$ and 
$L^p(0,{T_e};L^{p}({\bQ},{\mathbb R}^N))$, respectively.
Moreover, due to (\ref{aprioriEstim3N}) and the following obvious relation
\[z^l_m=z^0_m+h\sum^l_{n=1}\left(\frac{z^n_m-z^{n-1}_m}h\right)\]
we may conclude that $\{\bar z_m\}_m$ is uniformly bounded in 
$L^q(0,{T_e};L^q({\bQ},{\mathbb R}^N))$.\vspace{1ex}\\
In \cite{Nesenenko12a} it is shown that  the limit functions denoted by $u, T, z$ and
$\Sigma$ of the corresponding weakly convergent sequences have the following properties
\[u\in W^{1,q}(0,T_e;W^{1,q}_0({\bQ},{\mathbb R}^3)),\ \ (\sigma,L^{1/2}z) \in 
L^\infty(0,T_e;L^2({\bQ},{\cal S}^3\times{\mathbb R}^N)),\]
and 
\[z\in 
W^{1,q}(0,{T_e};L^q({\bQ},{\mathbb R}^N)),
\ \ \Sigma=B^TT- Lz\in 
L^p({\bQ}_{T_e},{\mathbb R}^N).\]
To prove that the weak limit of $(u_m, T_m,z_m)$ is a weak solution of the problem
 (\ref{MPr1}) - (\ref{MPr5}), we are going to employ the concept of the Fitzpatrick function again.
To this end, we rewrite \eq{AprioriEstimHelp11} as follows
\begin{eqnarray}\label{ConvergenceFitzpatrick1}
&& \frac{1}2\Big(
\|{\mathbb A}^{1/2}\bar\sigma_m(t)\|^2_{\bQ}+\|L^{1/2}\bar z_m(t)\|^2_{\bQ}
+\frac{1}{m}\|\bar{z}_m(t)\|^2_{\bQ}\Big)\\
&&+\int_0^t\int_{\bQ} F_{g}\left(x, B^T\bar\sigma_m-L\bar{z}_m, \partial_\tau z_m\right)dxds\le
C^{(0)}+
\int_0^t\left(\bar{b}_m,\partial_\tau u_m\right)_{\bQ}d\tau.\non
\end{eqnarray}
Next, using the lower semi-continuity of convex functionals we get (\ref{Eqiv_Fitzpatrick_Ineq}) 
after passing to the weak limit in \eq{ConvergenceFitzpatrick1}.
 This completes the proof of Theorem~\ref{ExResultPositiveSemiDef}.
\end{proof}



\section{\label{sec:Preliminaries}Stochastic homogenization}

Throughout this section, we follow the setting for stochastic homogenization proposed in \cite{heida2016a} for rate-independent
systems.
\begin{Rem}\label{rem:palm-lebesgue}
In the following, we introduce the concept of Palm measures. Note that we will need this concept only in the context of the results in Section \ref{sec:hom-convex}. For the main results proved in Section \ref{Homogenization} we will restrict to the case $\muomega^\eta=\lebesgueL$ which implies $\mupalm=\cP$ (this follows from the translation invariance and Fubini's theorem). In this case, we will omit $\mupalm$ and every integral over $\Omega$ is meant with respect to $\cP$. In particular, we will write $\int_\Omega f:=\int_\Omega f(\omega)d\cP(\omega)$.
\end{Rem}
\subsection{\label{sub:Ergodic-dynamical-systems}Concept of Palm measures}
Let $(\Omega,\cF_{\Omega},\cP,\tau)$ be a probability space with
dynamical system satisfying Assumption \ref{assu:Omega-mu-tau} and
let $\cM(\Rn)$ be the set of Radon measures on $\Rn$ equipped with
the Vague topology. 
\begin{Def}
\label{def:random-measure} Let $(\Omega,\cF_{\Omega},\cP,\tau)$
satisfy Assumtion \ref{assu:Omega-mu-tau}. A \emph{random measure}
is a mapping $\mu_{\bullet}:\,\Omega\to\cM(\Rn)$, $\omega\mapsto\mu_{\omega}$
such that $\omega\mapsto\mu_{\omega}(A)$ is measurable for all Borel
sets $A\subset\Rn$. A random measure is called stationary, if $\mu_{\tau_{x}\omega}(A)=\mu_{\omega}(A+x)$
for all Borel sets $A\subset\Rn$. The \emph{intensity} $\lambda(\muomega)$
is defined by: 
\begin{equation}
\lambda(\muomega):=\int_{\Omega}\int_{[0,1]^{n}}d\muomega(x)\,d\cP(\omega)\,.
\end{equation}
\end{Def}
\begin{Theor}[Mecke \cite{Mecke1967,Daley1988}: Existence of Palm measure]
\label{thm:Mecke-Palm}Let $\omega\mapsto\mu_{\omega}$ be a stationary
random measure. Then there exists a unique measure $\mupalm$ on $\Omega$
such that 
\[
\intomega\intrn f(x,\tau_{x}\omega)\,d\muomega(x)d\cP(\omega)=\intrn\intomega f(x,\omega)\,d\mupalm(\omega)dx
\]
 for all $\lebesgueL\times\mupalm$-measurable non negative functions
and all $\lebesgueL\times\mupalm$- integrable functions $f$. Furthermore
for all $A\subset\Omega$, $u\in L^{1}(\Omega,\mupalm)$ there holds
\begin{align}
\mupalm(A) & =\intomega\intrn g(s)\chi_{A}(\tau_{s}\omega)d\muomega(s)d\cP(\omega)\label{eq:def-mupalm}\\
\intomega u(\omega)d\mupalm & =\intomega\intrn g(s)u(\tau_{s}\omega)d\muomega(s)d\cP(\omega)
\end{align}
 for an arbitrary $g\in L^{1}(\Rn,\lebesgueL)$ with $\intrn g(x)dx=1$
and $\mupalm$ is $\sigma$-finite.\end{Theor}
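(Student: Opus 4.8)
The plan is to take the explicit formula \eqref{eq:def-mupalm} as the \emph{definition} of $\mupalm$ and then check that it has all the asserted properties, the analytic core being a translation-invariance argument. First I would fix, once and for all, a non-negative $g\in L^{1}(\Rn,\lebesgueL)$ with $\intrn g=1$ and set
\[
\mupalm(A):=\intomega\intrn g(s)\,\chi_{A}(\tau_{s}\omega)\,d\muomega(s)\,d\cP(\omega)\,,\qquad A\in\cF_{\Omega}\,.
\]
Well-definedness rests on the joint measurability of $(s,\omega)\mapsto\chi_{A}(\tau_{s}\omega)$, which follows from Assumption \ref{assu:Omega-mu-tau}(iii) together with the measurability built into Definition \ref{def:random-measure} via a monotone-class argument; countable additivity of $A\mapsto\mupalm(A)$ is then immediate from monotone convergence. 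The same indicator-to-simple-to-nonnegative approximation upgrades the definition to $\intomega u\,d\mupalm=\intomega\intrn g(s)\,u(\tau_{s}\omega)\,d\muomega(s)\,d\cP(\omega)$ for every measurable $u\colon\Omega\to[0,\infty]$; in particular this yields the second displayed identity of the statement for non-negative $u$.

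Next I would reduce the Campbell (transfer) identity to product functions $f(x,\omega)=\phi(x)h(\omega)$ with $\phi=\chi_{B}$, $B\subset\Rn$ bounded Borel, and $h=\chi_{A}$, $A\in\cF_{\Omega}$; linearity and monotone convergence extend it to all non-negative product-measurable $f$, and the decomposition $f=f^{+}-f^{-}$ handles the integrable case. For such a product function the right-hand side equals $\bigl(\intrn\phi\bigr)\bigl(\intomega h\,d\mupalm\bigr)$, while the left-hand side equals $\Psi_{h}(\phi)$, where for non-negative measurable $h$ I write $\Psi_{h}(\rho):=\intomega\intrn\rho(x)\,h(\tau_{x}\omega)\,d\muomega(x)\,d\cP(\omega)$. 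Since $\Psi_{h}(g)=\intomega h\,d\mupalm$ by the previous paragraph, the whole identity is equivalent to the claim that, for each fixed $h$, the functional $\rho\mapsto\Psi_{h}(\rho)$ is proportional to the Lebesgue integral, $\Psi_{h}(\rho)=\Psi_{h}(g)\,\intrn\rho$.

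The key step — and the one I expect to be the main obstacle — is to show that $\Psi_{h}$ is translation invariant, $\Psi_{h}(\rho(\cdot-y))=\Psi_{h}(\rho)$ for all $y\in\Rn$; granting this, local finiteness of $\Psi_{h}$ (for which I would first reduce to bounded $h$ and invoke $\intomega\muomega(K)\,d\cP=\lambda(\muomega)\,|K|$, itself a consequence of stationarity and measure preservation) forces $\Psi_{h}$ to extend to a translation-invariant locally finite Borel measure, hence a constant multiple of $\lebesgueL$ by uniqueness of Haar measure on $\Rn$. For the invariance one rewrites stationarity of $\muomega$ as $\intrn\psi(z)\,d\mu_{\tau_{y}\omega}(z)=\intrn\psi(z-y)\,d\muomega(z)$ and applies it with $\psi(z):=\rho(z)\,h\bigl(\tau_{z}(\tau_{y}\omega)\bigr)$, using the group property $\tau_{z+y}=\tau_{z}\circ\tau_{y}$, to obtain
\[
\intrn\rho(x-y)\,h(\tau_{x}\omega)\,d\muomega(x)=\intrn\rho(x)\,h\bigl(\tau_{x}(\tau_{y}\omega)\bigr)\,d\mu_{\tau_{y}\omega}(x)\,;
\]
the change of variables $\omega\mapsto\tau_{y}\omega$ in the outer $\cP$-integral — legitimate because $\tau_{y}^{-1}=\tau_{-y}$ and $\cP$ is measure preserving by Assumption \ref{assu:Omega-mu-tau}(ii), so $(\tau_{y})_{*}\cP=\cP$ — then collapses the right-hand side back to $\Psi_{h}(\rho)$. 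The genuine difficulty is less the formal manipulation than its justification: one must first verify that $d\muomega(x)\,d\cP(\omega)$ really defines a $\sigma$-finite measure on $\Omega\times\Rn$ (the Campbell measure) so that the Fubini--Tonelli steps are valid — this is exactly where the joint measurability in Assumption \ref{assu:Omega-mu-tau}(iii) and the finiteness of the intensity come in — and one must combine the two distinct invariances (spatial stationarity of $\muomega$, and measure preservation of the flow on $(\Omega,\cP)$) in the right order.

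Finally I would collect the remaining assertions. Uniqueness: if $\mu'$ is any measure on $\Omega$ satisfying the Campbell identity, then testing with $f(x,\omega)=g(x)\chi_{A}(\omega)$ and using $\intrn g=1$ forces $\mu'(A)=\mupalm(A)$; in particular $\mupalm$ is independent of the chosen $g$. Taking $f(x,\omega)=g(x)u(\omega)$ in the now-established identity reproduces the last displayed formula for all $u\in L^{1}(\Omega,\mupalm)$. And $\mupalm(\Omega)=\intomega\intrn g(s)\,d\muomega(s)\,d\cP(\omega)=\lambda(\muomega)$ — once more by stationarity plus measure preservation, since $A\mapsto\intomega\muomega(A)\,d\cP$ is a translation-invariant locally finite measure on $\Rn$ and hence equals $\lambda(\muomega)\,\lebesgueL$ — so under the standing assumption that the intensity is finite, $\mupalm$ is a finite, and in particular $\sigma$-finite, measure.
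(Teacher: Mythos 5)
The paper does not prove this theorem: it is quoted verbatim from the literature (Mecke's theorem, cited to \cite{Mecke1967,Daley1988}) and used as a black box, so there is no in-paper argument to compare your proposal against. That said, your proposal is a correct reconstruction of the standard proof of the refined Campbell theorem. The central structure is exactly right: define $\mupalm$ by \eqref{eq:def-mupalm} for one fixed $g$; reduce the Campbell identity to product functions $\rho\otimes h$ and hence to showing that $\rho\mapsto\Psi_{h}(\rho)$ is a constant multiple of Lebesgue measure; establish translation invariance of $\Psi_{h}$ by combining stationarity of $\mu_{\omega}$ (acting on the inner integral) with $\tau$-invariance of $\cP$ (acting on the outer integral) through the group law $\tau_{z-y}\circ\tau_{y}=\tau_{z}$; and then invoke uniqueness of Haar measure on $\Rn$, with local finiteness supplied by $A\mapsto\int_{\Omega}\mu_{\omega}(A)\,d\cP=\lambda(\mu_{\omega})\,|A|$ for bounded $h$, upgraded to general non-negative $h$ by monotone convergence. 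Uniqueness of $\mupalm$ and its independence of $g$ follow as you say by testing with $f=g'\otimes\chi_{A}$, and $\sigma$-finiteness follows from $\mupalm(\Omega)=\lambda(\mu_{\omega})$. You correctly flag the two genuine technical obligations that a careless exposition might skip: joint measurability of $(x,\omega)\mapsto\chi_{A}(\tau_{x}\omega)$ (Assumption \ref{assu:Omega-mu-tau}(iii) plus a monotone-class argument) and $\sigma$-finiteness of the Campbell measure $d\mu_{\omega}(x)\,d\cP(\omega)$ on $\Rn\times\Omega$ so that Tonelli applies. Two small caveats worth making explicit if you wrote this out fully: the $\sigma$-finiteness and finiteness of $\mupalm$ do require the finite-intensity hypothesis (which the paper assumes throughout but the theorem statement leaves implicit, as you note), and in the Haar-measure step one should observe that $\Psi_{h}$, extended from indicators by Carathéodory or by Riesz representation on $C_{c}(\Rn)$, is a genuine Borel measure before appealing to uniqueness. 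Neither is a gap in the argument, only a matter of bookkeeping.
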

\begin{Rem}
\label{rem:Remark-Palm}a) Setting $g(s):=\chi_{[0,1]^{n}}(s)$, the
Palm measure can equally be defined through (\ref{eq:def-mupalm}).

b) For the constant measure $\omega\mapsto\lebesgueL$, we simply
find $\mupalm=\cP$, the original probability measure. This is a direct
consequence of (\ref{eq:def-mupalm}), Fubini's theorem and Assumption
\ref{assu:Omega-mu-tau} (ii).
\end{Rem}
For a random measure $\muomega$, we define
\begin{equation}
\muomega^{\eta}(A):=\eta^{n}\muomega(\eta^{-1}A)\,.\label{eq:def-mu-omega-eps}
\end{equation}

\begin{Theor}[Ergodic Theorem \cite{Daley1988}]
\label{thm:ergodic-thm-palm}Let Assumption \ref{assu:Omega-mu-tau}
hold for $(\Omega,\cF_{\Omega},\cP,\tau)$. Let $\mu_{\omega}$ be
a stationary random measure with finite intensity and Palm measure
$\mupalm$. Then, for all $g\in L^{1}(\Omega,\mupalm)$ there holds
$\cP$ almost surely 
\begin{equation}
\lim_{\eta\rightarrow0}\int_{A}g(\tau_{\frac{x}{\eta}}\omega)d\muomega^{\eta}(x)=|A|\int_{\Omega}g(\omega)d\mupalm(\omega)\label{eq:palm-ergodic-formula}
\end{equation}
 for all bounded Borel sets $A$.
\end{Theor}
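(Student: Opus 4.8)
The plan is to reduce the statement, via the scaling $\muomega^{\eta}(A)=\eta^{n}\muomega(\eta^{-1}A)$, to the classical multiparameter pointwise ergodic theorem for the ergodic flow $(\tau_{x})_{x\in\Rn}$ on $(\Omega,\cP)$, with Mecke's theorem (Theorem~\ref{thm:Mecke-Palm}) supplying the value of the limiting constant. First I would substitute $x=\eta y$ and set $R=1/\eta$, so that the left-hand side of \eqref{eq:palm-ergodic-formula} becomes $R^{-n}\int_{RA}g(\tau_{y}\omega)\,d\muomega(y)$; it then suffices to show that $R^{-n}\int_{RA}g(\tau_{y}\omega)\,d\muomega(y)\to|A|\int_{\Omega}g\,d\mupalm$ as $R\to\infty$, $\cP$-almost surely. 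Decomposing $g=g^{+}-g^{-}$ we may assume $g\ge0$. Then $A\mapsto\int_{RA}g(\tau_{y}\omega)\,d\muomega(y)$ is monotone, so once the limit is established for half-open cubes it follows for every bounded Borel set $A$: given $\varepsilon>0$, squeeze $A$ between finite unions of cubes $D_{-}\subseteq A\subseteq D_{+}$ with $|D_{+}\setminus D_{-}|<\varepsilon$, pass to the limit on $D_{\pm}$, and let $\varepsilon\to0$; letting the approximating cubes run over those with rational data keeps the exceptional $\cP$-null set independent of $A$.

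For the cube I would introduce $\Phi(\omega):=\int_{[0,1)^{n}}g(\tau_{s}\omega)\,d\muomega(s)$. Applying Mecke's theorem to $f(x,\omega)=\chi_{[0,1)^{n}}(x)\,g(\omega)$ gives $\Phi\in L^{1}(\Omega,\cP)$ together with $\int_{\Omega}\Phi\,d\cP=\int_{\Omega}g\,d\mupalm=:L$ (cf. Remark~\ref{rem:Remark-Palm}a). Using stationarity of the random measure in the form $\int\psi(s)\,d\mu_{\tau_{x}\omega}(s)=\int\psi(s-x)\,d\muomega(s)$ one computes $\Phi(\tau_{x}\omega)=\int_{x+[0,1)^{n}}g(\tau_{s}\omega)\,d\muomega(s)$, so that, by Tonelli,
\[
\int_{[0,R)^{n}}\Phi(\tau_{x}\omega)\,dx=\int_{\Rn}\big|[0,R)^{n}\cap(s-[0,1)^{n})\big|\;g(\tau_{s}\omega)\,d\muomega(s),
\]
where the volume factor equals $1$ for $s\in[1,R)^{n}$, lies in $[0,1]$ always, and vanishes outside $[0,R+1)^{n}$; hence
\[
\int_{[1,R)^{n}}g(\tau_{s}\omega)\,d\muomega(s)\ \le\ \int_{[0,R)^{n}}\Phi(\tau_{x}\omega)\,dx\ \le\ \int_{[0,R+1)^{n}}g(\tau_{s}\omega)\,d\muomega(s).
\]
By the multiparameter pointwise ergodic theorem for the ergodic $\Rn$-flow, $R^{-n}\int_{[0,R)^{n}}\Phi(\tau_{x}\omega)\,dx\to L$ $\cP$-a.s., and the same holds with $[0,R)^{n}$ replaced by $RA$ for any half-open cube $A$ (a convex averaging sequence), with limit $|A|L$. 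The two outer integrals above differ only by the integral of $g(\tau_{\cdot}\omega)$ over a boundary frame of thickness $1$ around $\partial[0,R)^{n}$; covering this frame by $O(R^{n-1})$ unit cubes and invoking Birkhoff's theorem along the $n-1$ unbounded directions — where only the $\cP$-a.s. boundedness of the resulting $(n-1)$-dimensional averages is used — shows that $R^{-n}$ times the frame contribution tends to $0$ $\cP$-a.s. Sandwiching, and using $[1,R)^{n}\subseteq[0,R)^{n}\subseteq[0,R+1)^{n}$ together with $g\ge0$, yields $R^{-n}\int_{RA}g(\tau_{s}\omega)\,d\muomega(s)\to|A|L$ for cubes; combined with the reduction this proves the theorem.

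Two points carry the weight of the argument. The appearance of $\mupalm$ rather than $\cP$ in the limit is dictated precisely by Mecke's identity, once one decides to average the auxiliary function $\Phi$ — built from $g$ and the random measure — rather than $g$ itself; this is the conceptual heart but is essentially bookkeeping. The genuinely technical step, and the one I expect to be the main obstacle, is the control of the boundary-layer term $R^{-n}\int_{\mathrm{frame}}g(\tau_{\cdot}\omega)\,d\muomega$: one cannot simply appeal to the $\Zn$-invariant $\sigma$-algebra (which need not be trivial even though the $\Rn$-flow is ergodic) and must instead peel off lower-dimensional Birkhoff averages, for which a.s. boundedness is all that is required.
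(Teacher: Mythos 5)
The paper states this theorem with a citation to \cite{Daley1988} and gives no proof of its own, so there is no in-paper argument to compare against. Your outline follows the standard route: rescale to $R^{-n}\int_{RA}g(\tau_y\omega)\,d\muomega(y)$, introduce the auxiliary function $\Phi(\omega)=\int_{[0,1)^n}g(\tau_s\omega)\,d\muomega(s)$, identify $\int_\Omega\Phi\,d\cP=\int_\Omega g\,d\mupalm$ by Mecke's formula, compare $\int g\,d\muomega$ with $\int\Phi(\tau_x\omega)\,dx$ via Tonelli, and finish with Wiener's multiparameter ergodic theorem. The conceptual point you single out — that $\mupalm$ rather than $\cP$ appears because one averages $\Phi$, not $g$ — is exactly right.

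Two steps do not hold up as written. First, your frame estimate asserts that the boundary slab can be controlled by Birkhoff averages ``along the $n-1$ unbounded directions,'' but this only covers the faces anchored near the origin. The faces near $s_1\approx R$ produce sums $\sum_{k_2,\dots,k_n}\Phi(\tau_{(\lceil R\rceil,k_2,\dots,k_n)}\omega)$, whose base point $\tau_{(\lceil R\rceil,0,\dots,0)}\omega$ moves with $R$; the $\cP$-null set of the pointwise ergodic theorem depends on that base point, so a.s.\ boundedness uniformly in $R$ does not follow the way you claim. Fortunately, the whole frame argument is avoidable: the same Tonelli computation gives
\begin{equation*}
\int_{[0,R-1)^n}\Phi(\tau_x\omega)\,dx\ \le\ \int_{[0,R)^n}g(\tau_s\omega)\,d\muomega(s)\ \le\ \int_{[-1,R)^n}\Phi(\tau_x\omega)\,dx\,,
\end{equation*}
since the corresponding volume factors satisfy $\bigl|[0,R-1)^n\cap(s-[0,1)^n)\bigr|\le\chi_{[0,R)^n}(s)\le\bigl|[-1,R)^n\cap(s-[0,1)^n)\bigr|$; dividing by $R^n$ and applying Wiener to the nested cubes $[0,R-1)^n$ and $[-1,R)^n$ closes the argument directly, with no boundary layer to estimate. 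Second, and more seriously, the extension to ``all bounded Borel sets $A$'' by squeezing $D_-\subseteq A\subseteq D_+$ with $|D_+\setminus D_-|<\varepsilon$ works only for Jordan-measurable $A$ (i.e., $|\partial A|=0$). A fat Cantor set of positive measure has empty interior, so admits no inner approximation by finite unions of cubes, and your argument does not reach it. Proving the statement with a single $\cP$-null set valid for \emph{every} bounded Borel $A$ requires an additional idea (for instance, using positivity of $\nu_R^\omega:=R^{-n}g(\tau_\cdot\omega)\muomega\!\restriction_{R\,\cdot}$ together with outer regularity to control $\nu_R^\omega(U)$ for open $U$ of small measure, then passing to Borel sets), and this is where the real content of the cited proof lies.
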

The ergodic theorem only holds for functions on $\Omega$. Nevertheless,
it motivates the following generalization of the concept of ergodicity:
\begin{Def}
\label{def:ergodicity-general-functions}Let $f\in L^{p}(\bQ\times\Omega;\lebesgueL\otimes\mupalm)$
for some $1\leq p<\infty$. 
We say that $f$ is an ergodic function if it has a ${\cal B}(\bQ)\otimes\cF_\Omega$-measurable 
representative $\tilde f$ such that for $a=\tilde f$ and $a=|\tilde f|^p$ it holds
\begin{equation}
\begin{aligned}\lim_{\eta\rightarrow0}\int_{\bQ}a(x,\tau_{\frac{x}{\eta}}\omega)\,d\muomega^{\eta}(x) & =\int_{\bQ}\int_{\Omega}a(x,\tilde{\omega})\,d\mupalm(\tilde{\omega})\,dx\,.\end{aligned}
\label{eq:ergodicity-general-functions}
\end{equation}
Finally, we say $f\in L^{p}(\bQ\times\Omega;\lebesgueL\otimes\mupalm)$
is an ergodic function if (\ref{eq:ergodicity-general-functions})
holds for almost all $\omega$.
\end{Def}
By Theorem \ref{thm:ergodic-thm-palm}, we find that every $g\in L^{1}(\Omega,\mupalm)$
is ergodic. In \cite[Section 2.5]{heida2016a} a larger set of ergodic functions
was identified:
\begin{Lem}
\label{lem:ergodicity-charac-functions} Let Assumption
\ref{assu:Omega-mu-tau} hold for $(\Omega,\cF_{\Omega},\cP,\tau)$.
Let $\bQ\subset\Rn$ be a bounded domain and let $f\in L^{\infty}(\bQ\times\Omega;\lebesgueL\otimes\mupalm)$.
Then, $f$ is an ergodic function. 
\end{Lem}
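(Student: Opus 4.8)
).} For $b\in L^{1}(\bQ\times\Omega;\lebesgueL\otimes\mupalm)$ and $\eta>0$ set $I_{\eta}(b):=\int_{\bQ}b(x,\tau_{\frac{x}{\eta}}\omega)\,d\muomega^{\eta}(x)$ and $\langle b\rangle:=\int_{\bQ}\int_{\Omega}b(x,\tilde\omega)\,d\mupalm(\tilde\omega)\,dx$. It suffices to prove the single-function statement: \emph{for every $a\in L^{\infty}(\bQ\times\Omega;\lebesgueL\otimes\mupalm)$ admitting a $\cB(\bQ)\otimes\cF_{\Omega}$-measurable representative (again denoted $a$) there is $\Omega_{a}\subset\Omega$ with $\cP(\Omega_{a})=1$ such that $I_{\eta}(a)\to\langle a\rangle$ as $\eta\to0$ for every $\omega\in\Omega_{a}$.} Indeed, fixing a jointly measurable representative $\tilde f$ of $f$, the functions $\tilde f$ and $|\tilde f|^{p}$ are bounded and jointly measurable, so applying the statement to $a=\tilde f$ and $a=|\tilde f|^{p}$ and intersecting the two full-measure sets yields precisely Definition~\ref{def:ergodicity-general-functions}. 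Throughout we use that $\mupalm$ has finite intensity, hence $\mupalm(\Omega)<\infty$ by Theorem~\ref{thm:Mecke-Palm}, so $\lebesgueL\otimes\mupalm$ is a \emph{finite} measure on $\bQ\times\Omega$ (recall $\bQ$ is bounded).

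\emph{Step 1: elementary functions.} I would first verify the statement for $a=\sum_{i=1}^{k}c_{i}\chi_{B_{i}}(x)\chi_{C_{i}}(\omega)$ with $B_{i}\subset\bQ$ Borel and $C_{i}\in\cF_{\Omega}$. Since $\chi_{C_{i}}\in L^{1}(\Omega,\mupalm)$, Theorem~\ref{thm:ergodic-thm-palm} provides, for each $i$, a $\cP$-full-measure set on which $\int_{A}\chi_{C_{i}}(\tau_{\frac{x}{\eta}}\omega)\,d\muomega^{\eta}(x)\to|A|\int_{\Omega}\chi_{C_{i}}\,d\mupalm$ as $\eta\to0$, simultaneously for all bounded Borel sets $A$; intersecting over $i=1,\dots,k$ and forming the linear combination gives $I_{\eta}(a)\to\langle a\rangle$. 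In particular, taking $a\equiv1$, we get $\muomega^{\eta}(\bQ)\to|\bQ|\mupalm(\Omega)$ $\cP$-a.s.

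\emph{Step 2: two-sided approximation.} For $a$ with $0\le a\le M$ I would construct, for each $j\in\N$, elementary functions $s_{j}^{-}\le a\le s_{j}^{+}$ of the form used in Step~1 such that $\|s_{j}^{+}-s_{j}^{-}\|_{L^{1}(\bQ\times\Omega;\lebesgueL\otimes\mupalm)}\to0$. Setting $\Omega_{a}:=\bigcap_{j\in\N}\big(\Omega_{s_{j}^{+}}\cap\Omega_{s_{j}^{-}}\big)$ (a full-measure set), monotonicity of the integral together with Step~1 gives, on $\Omega_{a}$ and for every $j$,
\[
\langle s_{j}^{-}\rangle\le\liminf_{\eta\to0}I_{\eta}(a)\le\limsup_{\eta\to0}I_{\eta}(a)\le\langle s_{j}^{+}\rangle;
\]
since $0\le s_{j}^{+}-a\le s_{j}^{+}-s_{j}^{-}$ and $0\le a-s_{j}^{-}\le s_{j}^{+}-s_{j}^{-}$, both bounds converge to $\langle a\rangle$ as $j\to\infty$, so $I_{\eta}(a)\to\langle a\rangle$ on $\Omega_{a}$. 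For general $a\in L^{\infty}$ one writes $a=a^{+}-a^{-}$ and intersects the two resulting full-measure sets. This establishes the single-function statement, and hence the lemma.

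\emph{Where the difficulty lies.} The only non-routine point is the two-sided approximation in Step~2, and it is here that the boundedness of $\bQ$ and the finiteness of $\mupalm$ are essential (for infinite $\mupalm$ even $\langle 1\rangle$ would be infinite). Since the finite unions of measurable rectangles $B\times C$ form an algebra generating $\cB(\bQ)\otimes\cF_{\Omega}$, it suffices to prove that the finite measure $\mu:=\lebesgueL\otimes\mupalm$ is \emph{inner regular} with respect to this algebra, i.e.\ $\mu(E)=\sup\{\mu(R)\,:\,R\subset E\text{ a finite union of measurable rectangles}\}$ for every measurable $E$; outer regularity then follows by complementation (the algebra is closed under complements in $\bQ\times\Omega$), and approximating the superlevel sets $\{a\ge t\}$ from inside and from outside produces $s_{j}^{\pm}$ with gap bounded by $M$ times the approximation error of the levels plus a dyadic truncation term $2^{-j}$. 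The inner regularity of $\mu$ I would deduce from the regularity of $\lebesgueL$ on $\bQ$ together with the fact that, $\cF_{\Omega}$ being countably generated, $\mupalm$ is measure-isomorphic to a finite Borel (hence Radon) measure on a compact metric space, so that finite unions of compact rectangles form a compact class with respect to which $\mu$ is inner regular. I expect this measure-theoretic approximation to be the only delicate step; everything else is bookkeeping with Theorem~\ref{thm:ergodic-thm-palm}.
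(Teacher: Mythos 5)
Your Step~1 is correct: Theorem~\ref{thm:ergodic-thm-palm} applied to each $\chi_{C_i}\in L^1(\Omega,\mupalm)$, after intersecting finitely many full-measure sets, does handle simple functions built from measurable rectangles. The problem is Step~2: you reduce the general case to the existence of a pointwise two-sided approximation $s_j^-\le a\le s_j^+$ by rectangle simple functions with $\|s_j^+-s_j^-\|_{L^1}\to0$, and you deduce this from inner regularity of $\mu=\lebesgueL\otimes\mupalm$ with respect to finite unions of measurable rectangles. That inner regularity is false in general. Take $n=1$, $\bQ=(0,1)$, $\Omega=[0,1)$ with $\cP$ Lebesgue, $\tau_x\omega=\omega+x\bmod 1$, and $\muomega=\lebesgueL$ (so $\mupalm=\cP$). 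Choose $N\subset[0,1)$ of full measure containing no interval (for instance the irrationals) and set $E=\{(x,\omega):\ \omega-x\bmod 1\in N\}$. Then $\mu(E)=1$, but if $B\times C\subset E$ with $|B|,|C|>0$ then $(C-B)\bmod 1\subset N$, which by the Steinhaus theorem would force $N$ to contain an interval. Hence every finite union of rectangles contained in $E$ has measure zero, and $\sup\{\mu(R):R\subset E,\ R\text{ a finite union of rectangles}\}=0\neq1$. Note that the lemma itself is true for $f=\chi_E$ in this example (a direct computation gives $I_\eta(\chi_E)\to1=\langle\chi_E\rangle$), so your route is blocked even though the statement holds.

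Your appeal to a compact class does not repair this. Compactness of finite unions of compact rectangles shows that the product premeasure is $\sigma$-additive and that the extension is inner regular with respect to arbitrary compact subsets of the product (equivalently, the $\sigma$-closure of the compact class), not with respect to finite unions of rectangles themselves. Consequently the two-sided sandwich you need does not exist for a general bounded measurable $a$, and the monotone comparison in Step~2 cannot close. The paper does not prove this lemma here but refers to \cite[Section~2.5]{heida2016a}; whatever argument is used there must avoid pointwise sandwiching by rectangles. A plausible replacement is to approximate by cell averages $a_j(x,\omega)=\sum_i\chi_{Q_i}(x)\,|Q_i|^{-1}\int_{Q_i}a(y,\omega)\,dy$, for which Theorem~\ref{thm:ergodic-thm-palm} still applies since each coefficient lies in $L^1(\Omega,\mupalm)$, and then to control the error $I_\eta(a)-I_\eta(a_j)$ uniformly in small $\eta$ via a maximal-function or uniform-integrability estimate rather than by monotonicity; as it stands, your argument has a genuine gap at this point.
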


\subsection{Potentials and Solenoidals}

Let ${\cal BD}(\Rn)$ denote the set of bounded domains in $\Rn$. 
For every $p$ with $1<p<\infty$, we introduce the following
spaces:
\begin{align*}
L_{\rm pot}^{p}(\Rn) & :=\overline{\left\{ g\in L_{loc}^{p}(\Rn;\Rn)\mid\forall\bQ\in{\cal BD}(\Rn)\,\exists f\in W^{1,p}(\bQ):\,\,g=\nabla f\right\} }^{\norm{\cdot}_{L_{loc}^{p}}}\,,\\
L_{{\rm sol}}^{p}(\Rn) & :=\left\{ g\in L_{loc}^{p}(\Rn;\Rn)\mid\forall\bQ\in{\cal BD}(\Rn)\,\exists f\in W_{0}^{1,p}(\bQ)\,:\,\int_{\bQ} g\cdot\nabla f=0\right\} \,.
\end{align*}
On $\Omega$, we introduce the corresponding spaces
\begin{align*}
L_{\rm pot}^{p}(\Omega) & :=\left\{ f\in L^{p}(\Omega;\Rn)\mid f_{\omega}\in L_{\rm pot}^{p}(\Rn)\,\,\cP-\mbox{a.s. and }\int_{\Omega}f=0\right\} \,,\\
L_{{\rm sol}}^{p}(\Omega) & :=\left\{ f\in L^{p}(\Omega;\Rn)\mid f_{\omega}\in L_{{\rm sol}}^{p}(\Rn)\,\,\cP-\mbox{a.s. and }\int_{\Omega}f=0\right\} \,.
\end{align*}
Then, there holds the following orthogonal decomposition.
\begin{Lem}[{\cite[Theorem 3.1.2]{Pankov97}}]
\label{lem:Ortho-Pot-Sol}Let $1<p<\infty$ and $p^{-1}+q^{-1}=1$
and let Assumption \ref{assu:Omega-mu-tau} hold for $(\Omega,\cF_{\Omega},\cP,\tau)$.
Then the following relations hold in the sense of duality between
the spaces $L^{p}(\Omega,\cP)$ and $L^{q}(\Omega,\cP)$:
\[
\left(L_{\rm pot}^{p}(\Omega)\right)^{\bot}=L_{{\rm sol}}^{q}(\Omega)\oplus\Rn\,,\qquad\left(L_{{\rm sol}}^{p}(\Omega)\right)^{\bot}=L_{\rm pot}^{q}(\Omega)\oplus\Rn\,.
\]

\end{Lem}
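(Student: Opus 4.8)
The plan is to reduce the lemma to two facts: (\textbf{Orthogonality}) $\intomega f\cdot g\,\d\cP=0$ whenever $f\in L_{\rm pot}^{p}(\Omega)$, $g\in L_{{\rm sol}}^{q}(\Omega)$, and --- by the same argument --- whenever $f\in L_{\rm pot}^{q}(\Omega)$, $g\in L_{{\rm sol}}^{p}(\Omega)$; and (\textbf{Weyl decomposition}) for every $1<p<\infty$ one has a topological direct sum $L^{p}(\Omega;\Rn)=L_{\rm pot}^{p}(\Omega)\oplus\Rn\oplus L_{{\rm sol}}^{p}(\Omega)$, where $\Rn$ stands for the constant fields. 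Granting these, the lemma follows by a soft duality argument. Every $f\in L_{\rm pot}^{p}(\Omega)\cup L_{{\rm sol}}^{p}(\Omega)$ satisfies $\intomega f=0$, so constant fields annihilate both spaces; together with Orthogonality this gives $L_{{\rm sol}}^{q}(\Omega)\oplus\Rn\subseteq\bigl(L_{\rm pot}^{p}(\Omega)\bigr)^{\bot}$ and $L_{\rm pot}^{q}(\Omega)\oplus\Rn\subseteq\bigl(L_{{\rm sol}}^{p}(\Omega)\bigr)^{\bot}$. For the reverse inclusions, let $P^{p}\colon L^{p}(\Omega;\Rn)\to L_{\rm pot}^{p}(\Omega)$ be the bounded projection furnished by the Weyl decomposition; its range is $L_{\rm pot}^{p}(\Omega)$ and its kernel is $\Rn\oplus L_{{\rm sol}}^{p}(\Omega)$, whence $\bigl(L_{\rm pot}^{p}(\Omega)\bigr)^{\bot}=\Ker (P^{p})^{*}$ and $\bigl(\Rn\oplus L_{{\rm sol}}^{p}(\Omega)\bigr)^{\bot}=\operatorname{ran}(P^{p})^{*}$. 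By the inclusions just proved, $\Rn\oplus L_{{\rm sol}}^{q}(\Omega)\subseteq\Ker (P^{p})^{*}$ and $L_{\rm pot}^{q}(\Omega)\subseteq\operatorname{ran}(P^{p})^{*}$; since these two subspaces of $L^{q}(\Omega;\Rn)$ are complementary (Weyl decomposition for the exponent $q$) while $\Ker (P^{p})^{*}$ and $\operatorname{ran}(P^{p})^{*}$ are complementary as well, both containments are equalities, and in particular $\bigl(L_{\rm pot}^{p}(\Omega)\bigr)^{\bot}=\Rn\oplus L_{{\rm sol}}^{q}(\Omega)$. The second identity is obtained the same way with the roles of \emph{pot} and \emph{sol} interchanged.

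To prove Orthogonality it suffices, by density and continuity of the pairing, to treat $f=\nablaomega\psi$ with $\psi\in L^{\infty}(\Omega)$ lying in the domain of every directional generator of the group $(\tau_{x})_{x\in\Rn}$, and $g\in L^{\infty}(\Omega;\Rn)\cap L_{{\rm sol}}^{q}(\Omega)$; for such $f$ the realization $f_{\omega}(x):=f(\tau_{x}\omega)$ equals $\nabla_{x}u_{\omega}(x)$ with the \emph{bounded} potential $u_{\omega}(x):=\psi(\tau_{x}\omega)$, while $g_{\omega}(x):=g(\tau_{x}\omega)$ lies in $L_{{\rm sol}}^{q}(\Rn)$ for $\cP$-a.e.\ $\omega$. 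Fix $\delta>0$ and a cutoff $\varphi_{R}\in C_{c}^{\infty}(B_{(1+\delta)R})$ with $\varphi_{R}\equiv1$ on $B_{R}$ and $\norm{\nabla\varphi_{R}}_{\infty}\le C(\delta R)^{-1}$. Solenoidality of $g_{\omega}$ applied to the test function $\varphi_{R}u_{\omega}$ gives $0=\int\varphi_{R}\,g_{\omega}\cdot f_{\omega}+\int u_{\omega}\,g_{\omega}\cdot\nabla\varphi_{R}$. Dividing by $|B_{R}|$ and letting $R\to\infty$: the last integral is $O\!\bigl((\delta R)^{-1}\norm{\psi}_{\infty}\norm{g}_{\infty}\bigr)\to0$, while the first converges, by the ergodic theorem (Theorem~\ref{thm:ergodic-thm-palm} with $\muomega=\lebesgueL$, hence $\mupalm=\cP$, applied to $\omega\mapsto f(\omega)\cdot g(\omega)\in L^{1}(\Omega,\cP)$), to $\intomega f\cdot g\,\d\cP$ up to an error of order $((1+\delta)^{n}-1)\intomega|f\cdot g|\,\d\cP$ stemming from the annulus $B_{(1+\delta)R}\setminus B_{R}$. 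Letting $\delta\to0$ forces $\intomega f\cdot g\,\d\cP=0$, and a passage to the limit in $L^{p}(\Omega;\Rn)\times L^{q}(\Omega;\Rn)$ extends this to all admissible $f,g$. Here the density of the fields $\nablaomega\psi$ in $L_{\rm pot}^{p}(\Omega)$ is the regularity statement identifying the realization-wise description of $L_{\rm pot}^{p}(\Omega)$ with the generator-based one, which I would invoke from \cite{Pankov97,Zhikov_Pyatnitskii_2006}.

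For the Weyl decomposition, when $p=2$ I would follow Papanicolaou--Varadhan: the operator $-\diveromega\nablaomega$ on $L^{2}(\Omega,\cP)$ is nonnegative and self-adjoint, so for $f\in L^{2}(\Omega;\Rn)$ with $\intomega f=0$ and $\lambda>0$ there is a unique solution $v_{\lambda}$ of $\lambda v_{\lambda}-\diveromega\nablaomega v_{\lambda}=\diveromega f$; the energy identity yields $\sqrt{\lambda}\,\norm{v_{\lambda}}_{2}\le C$ and $\norm{\nablaomega v_{\lambda}}_{2}\le C$, ergodicity forces $\sqrt{\lambda}\,v_{\lambda}\to0$, and along a subsequence $\nablaomega v_{\lambda}\weakto w\in L_{\rm pot}^{2}(\Omega)$ with $f-w\in L_{{\rm sol}}^{2}(\Omega)$, the splitting being unique by Orthogonality together with the mean-zero constraint and the constant field accounting for $\intomega f$. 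The extension to all $1<p<\infty$ rests on $L^{p}$-estimates for these stationary auxiliary problems. The step I expect to be the main obstacle is precisely this last one, together with the companion regularity result used above: both go beyond Hilbert-space arguments and require the full $L^{p}$-theory of the random differential operators $\nablaomega$ and $\diveromega$, which is why it is natural to quote \cite[Theorem 3.1.2]{Pankov97} here rather than reprove it.
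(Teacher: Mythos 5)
The paper does not prove this lemma at all: it is imported verbatim as \cite[Theorem 3.1.2]{Pankov97}, so there is no in-paper argument to compare against. Your sketch is a faithful reconstruction of the standard proof behind that citation (Papanicolaou--Varadhan / Zhikov--Kozlov--Oleinik / Pankov), and its logical skeleton is sound: the duality bookkeeping via the bounded projection $P^{p}$ and its adjoint is correct (two direct-sum decompositions of $L^{q}(\Omega;\Rn)$ with componentwise inclusions force componentwise equalities), the integration-by-parts-plus-ergodic-theorem argument for orthogonality is the right one, and the annulus error $\bigl((1+\delta)^{n}-1\bigr)\intomega|f\cdot g|\,d\cP$ followed by $\delta\to0$ is handled correctly. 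You have also correctly isolated the two genuinely nontrivial analytic inputs --- the density of bounded potentials $\nablaomega\psi$ in $L_{\rm pot}^{p}(\Omega)$ (equivalence of the realization-wise and generator-based descriptions) and the $L^{p}$-theory of the stationary auxiliary problem for $p\neq2$ --- and these are precisely the content that makes the result a theorem in \cite{Pankov97} rather than an exercise; deferring them to the cited source is exactly what the paper itself does, only less explicitly. One cosmetic remark: the definition of $L_{{\rm sol}}^{p}(\Rn)$ in the paper contains an evident typo ($\exists f$ should be $\forall f$), and your argument implicitly uses the corrected ($\forall$) version, which is the intended one.
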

Every $L_{\rm pot}^{p}(\Omega)$ function can be optained as the ergodic limit of a sequence of gradients with vanishing potentials. The following result can be proved like in 
\cite[Section 2.3]{heida2014stochastic}.
\begin{Lem}\label{lem:vanishing-ergodic-potential}
Let $\v\in L_{\rm pot}^{p}(\Omega)$, $1<p<\infty$. For almost every $\omega$ there exists $C>0$ such that the following holds: For every $\eta>0$ there exists a unique $V_\eta^\omega$ with $\nabla V_\eta^\omega(x)=\v(\tau_{\frac x\eta}\omega)$ and $\|V_\eta\|_{H^1(\bQ)}\leq C\|\v\|_{L_{\rm pot}^{p}(\Omega)}$ for all $\eta>0$. Furthermore,
$$\lim_{\eta\to0}\|V^\omega_\eta\|_{L^p(\bQ)}=0\,.$$
\end{Lem}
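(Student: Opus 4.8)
The plan is to construct $V_\eta^\omega$ explicitly by rescaling a potential built on the whole space, and then exploit the ergodic theorem together with the mean-zero property of $\v$ to get the $L^p$-convergence to zero.

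\textbf{Step 1: Construct a potential on $\Rn$.}
Since $\v\in L_{\rm pot}^p(\Omega)$, for $\cP$-a.e. $\omega$ the realization $x\mapsto\v(\tau_x\omega)$ lies in $L_{\rm pot}^p(\Rn)$ (here I use the standard fact, implicit in the definition of $L_{\rm pot}^p(\Omega)$ and provable via the measurability of $(x,\omega)\mapsto\tau_x\omega$ and Fubini, that $\v_\omega(x):=\v(\tau_x\omega)$ is a stationary representative whose realizations are potential fields). By the very definition of $L_{\rm pot}^p(\Rn)$ as a closure of gradients, there is a function $W^\omega\in W^{1,p}_{loc}(\Rn)$, unique up to an additive constant, with $\nabla W^\omega=\v_\omega$ on all of $\Rn$; fix the constant by requiring $\int_\bQ W^\omega=0$, say, so that $W^\omega$ is unique. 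Define
\[
V_\eta^\omega(x):=\eta\, W^\omega\!\left(\tfrac{x}{\eta}\right)\,,
\]
so that $\nabla V_\eta^\omega(x)=(\nabla W^\omega)(\tfrac x\eta)=\v(\tau_{\frac x\eta}\omega)$, as required. The normalization $\int_\bQ V_\eta^\omega=0$ can be arranged by subtracting the mean, which does not affect the gradient.

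\textbf{Step 2: Uniform $H^1$ bound.}
Since $\v\in L^p(\Omega)$ with $p\ge 2$ (in the relevant range) and $\bQ$ is bounded, $\v_\omega\in L^2(\bQ)$ and by the ergodic theorem applied to $|\tilde\v|^2$ (Theorem~\ref{thm:ergodic-thm-palm}, with $\muomega=\lebesgueL$, $\mupalm=\cP$) one gets $\|\nabla V_\eta^\omega\|_{L^2(\bQ)}^2=\int_\bQ|\v(\tau_{\frac x\eta}\omega)|^2dx\to|\bQ|\int_\Omega|\v|^2\,d\cP$, hence this quantity is bounded uniformly in $\eta$; combined with a Poincaré inequality (using the mean-zero normalization) this yields $\|V_\eta^\omega\|_{H^1(\bQ)}\le C\|\v\|_{L_{\rm pot}^p(\Omega)}$ for all $\eta$, with $C$ depending on $\omega$ and $\bQ$ only. (More carefully, one should state the bound in the $L^p$-based Sobolev norm if that is what is intended; the argument is the same with $|\tilde\v|^p$ in place of $|\tilde\v|^2$, using that $\v$ is an ergodic function in the sense of Lemma~\ref{lem:ergodicity-charac-functions} after truncation, or directly if $\v\in L^\infty$.)

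\textbf{Step 3: $L^p$-convergence to zero — the main obstacle.}
This is the delicate part. The uniform $H^1$ bound gives, along a subsequence, weak convergence $V_\eta^\omega\weakto V_0$ in $H^1(\bQ)$ and strong convergence in $L^p(\bQ)$ by Rellich–Kondrachov (for $p$ below the Sobolev exponent; in general one argues on a ball and uses compact embedding into $L^p$ for the relevant $p$, or interpolates). It remains to identify $V_0=0$. For this, note $\nabla V_0=\wlim_\eta\v(\tau_{\frac\cdot\eta}\omega)$; by the ergodic theorem this weak limit equals the constant $\int_\Omega\v\,d\cP=0$ (using the defining mean-zero property of $L_{\rm pot}^p(\Omega)$ functions and that weak $L^p$-limits of the oscillating fields are given by the mean, cf.\ the two-scale/ergodic averaging machinery). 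Hence $\nabla V_0=0$ on the connected domain $\bQ$, so $V_0$ is constant, and the normalization $\int_\bQ V_0=0$ forces $V_0=0$. Since the limit is the same along every subsequence, the whole family converges: $\|V_\eta^\omega\|_{L^p(\bQ)}\to0$. The point requiring care is justifying that the weak $L^p$-limit of $x\mapsto\v(\tau_{x/\eta}\omega)$ is the ensemble average $\int_\Omega\v$ — this is exactly where ergodicity enters, via Theorem~\ref{thm:ergodic-thm-palm} tested against $L^q$ functions (first simple functions, then density), and it is the step one should write out carefully; everything else is the standard rescaling-plus-compactness routine.
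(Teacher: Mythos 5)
Your proposal is correct and uses the standard rescaling argument that the paper itself attributes to \cite[Section 2.3]{heida2014stochastic}: realize $\v$ on $\Rn$, take a potential $W^\omega$, rescale to $V_\eta^\omega(x)=\eta W^\omega(x/\eta)$, bound it via the ergodic theorem, then pass to the limit by Rellich--Kondrachov and identify the weak limit of the gradient with $\int_\Omega\v\,d\cP=0$. Since the paper does not print a proof but only cites a reference, there is nothing to contrast in detail; what you wrote is exactly the expected argument.

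Two small points worth tightening. First, the uniform bound is cleaner if stated directly in $W^{1,p}(\bQ)$: apply Theorem~\ref{thm:ergodic-thm-palm} to $|\v|^p\in L^1(\Omega,\cP)$ (no truncation or appeal to Lemma~\ref{lem:ergodicity-charac-functions} is needed, since the ergodic theorem applies to $L^1$ data directly), which gives $\sup_\eta\|\nabla V_\eta^\omega\|_{L^p(\bQ)}<\infty$ for a.e.\ $\omega$, and then Poincar\'e with the mean-zero normalization gives the $W^{1,p}$ bound; this sidesteps the awkwardness of the $H^1$ norm when $p\neq2$. Second, in Step~3 the compactness should then be taken as $W^{1,p}(\bQ)\hookrightarrow\hookrightarrow L^p(\bQ)$, which holds for every $1<p<\infty$ on a bounded Lipschitz domain; that removes the caveat about the Sobolev exponent entirely. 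With these cosmetic adjustments the proof is complete: weak $L^p$-convergence $\nabla V_\eta^\omega\weakto\int_\Omega\v\,d\cP=0$ (by the ergodic theorem tested against a dense class in $L^q(\bQ)$ and the mean-zero property built into $L^p_{\rm pot}(\Omega)$), hence any $L^p$-limit of a subsequence has vanishing gradient and zero mean and is therefore $0$, and the subsequence principle upgrades this to convergence of the full family.
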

Furthermore, we find the following important Korn inequality, which
can be proved like in \cite[Section 2.3]{heida2014stochastic}. 
\begin{Lem}
\label{lem:Korn-Omega}Let Assumption \ref{assu:Omega-mu-tau} hold
for $(\Omega,\cF_{\Omega},\cP,\tau)$. For every $1<p<\infty$ there
exists $C_{p}>0$ such that for all $\v\in L_{\rm pot}^{p}(\Omega;\Rn)$
\begin{equation}
\norm{\v}_{L^{p}(\Omega;\Rnn)}\leq C_{p}\norm{\v^{s}}_{L^{p}(\Omega;\Rnn)}\,.\label{eq:Korn-Omega}
\end{equation}
\end{Lem}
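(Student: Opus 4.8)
The plan is to transfer the classical $L^p$-Korn inequality on bounded domains in $\Rn$ to the probability space $\Omega$ by means of the ergodic realization map and Lemma \ref{lem:vanishing-ergodic-potential}. Let me fix $1<p<\infty$ and take an arbitrary $\v\in L_{\rm pot}^{p}(\Omega;\Rnn)$; I must produce $C_p$ independent of $\v$ so that $\norm{\v}_{L^{p}(\Omega;\Rnn)}\leq C_{p}\norm{\v^{s}}_{L^{p}(\Omega;\Rnn)}$, where $\v^s=\tfrac12(\v+\v^T)$ componentwise. The starting observation is that for almost every $\omega$, the realization $x\mapsto\v(\tau_x\omega)$ is a (matrix-valued) potential field, i.e. each of its rows is the gradient of an $L^p_{loc}$ function; applying Lemma \ref{lem:vanishing-ergodic-potential} row by row gives, on a fixed bounded domain $\bQ\subset\Rn$, a family $V^\omega_\eta\in W^{1,p}(\bQ;\Rn)$ with $\nabla V^\omega_\eta(x)=\v(\tau_{x/\eta}\omega)$, with $\norm{V^\omega_\eta}_{H^1(\bQ)}$ uniformly bounded and $\norm{V^\omega_\eta}_{L^p(\bQ)}\to0$ as $\eta\to0$.

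First I would apply the classical Korn inequality in $\bQ$ to the vector field $V^\omega_\eta$: there is a constant $C_{\bQ,p}$ (depending only on $\bQ$ and $p$, not on $\eta$ or $\omega$) such that
\begin{equation*}
\norm{\nabla V^\omega_\eta}_{L^p(\bQ)}\leq C_{\bQ,p}\left(\norm{(\nabla V^\omega_\eta)^s}_{L^p(\bQ)}+\norm{V^\omega_\eta}_{L^p(\bQ)}\right)\,.
\end{equation*}
Substituting $\nabla V^\omega_\eta(x)=\v(\tau_{x/\eta}\omega)$ and using that $(\nabla V^\omega_\eta)^s(x)=\v^s(\tau_{x/\eta}\omega)$, this reads
\begin{equation*}
\left(\int_{\bQ}\left|\v(\tau_{x/\eta}\omega)\right|^p dx\right)^{1/p}\leq C_{\bQ,p}\left(\left(\int_{\bQ}\left|\v^s(\tau_{x/\eta}\omega)\right|^p dx\right)^{1/p}+\norm{V^\omega_\eta}_{L^p(\bQ)}\right)\,.
\end{equation*}
Next I would let $\eta\to0$. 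Since $\v\in L^p(\Omega;\Rnn)\subset L^p(\Omega;\cP)$ and similarly $|\v|^p,|\v^s|^p\in L^1(\Omega,\cP)$, and since in the present setting $\muomega^\eta=\lebesgueL$, $\mupalm=\cP$ (Remark \ref{rem:palm-lebesgue}), the ergodic theorem — in the form of Lemma \ref{lem:ergodicity-charac-functions} applied to the (bounded-truncations of the) functions $|\v|^p$ and $|\v^s|^p$, or directly Theorem \ref{thm:ergodic-thm-palm} together with a truncation/monotone-convergence argument to handle unbounded $L^1$ integrands — gives $\cP$-a.s.
\begin{equation*}
\lim_{\eta\to0}\frac1{|\bQ|}\int_{\bQ}\left|\v(\tau_{x/\eta}\omega)\right|^p dx=\int_{\Omega}|\v|^p\,d\cP=\norm{\v}_{L^p(\Omega;\Rnn)}^p\,,
\end{equation*}
and likewise for $\v^s$; meanwhile the error term $\norm{V^\omega_\eta}_{L^p(\bQ)}$ vanishes by Lemma \ref{lem:vanishing-ergodic-potential}. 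Passing to the limit in the displayed inequality yields $|\bQ|^{1/p}\norm{\v}_{L^p(\Omega;\Rnn)}\leq C_{\bQ,p}\,|\bQ|^{1/p}\norm{\v^s}_{L^p(\Omega;\Rnn)}$, so $C_p:=C_{\bQ,p}$ works (the $|\bQ|^{1/p}$ factors cancel, and one may simply fix $\bQ=[0,1]^n$).

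The main obstacle is the interchange of limits and the choice of ergodic-convergence tool: the realizations $x\mapsto\v(\tau_{x/\eta}\omega)$ need not be bounded, so Lemma \ref{lem:ergodicity-charac-functions} does not apply verbatim; one must either invoke the $L^1$ ergodic theorem for $|\v|^p,|\v^s|^p\in L^1(\Omega,\cP)$ directly, or truncate at level $k$, pass to the limit in $\eta$, and then send $k\to\infty$ using monotone convergence, taking care that the null set of bad $\omega$ can be chosen uniformly (a countable intersection over $k\in\N$). A secondary, more technical point is verifying that Lemma \ref{lem:vanishing-ergodic-potential} genuinely applies to the matrix-valued potential $\v$: one applies it componentwise to each row, which lies in $L^p_{\rm pot}(\Omega)$ by definition of $L^p_{\rm pot}(\Omega;\Rnn)$, and assembles the rows of the potentials $V^\omega_\eta$ into a single vector field; the uniform $H^1(\bQ)$-bound and the $L^p(\bQ)\to0$ convergence then hold for the assembled field as well. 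Once these two points are settled, the argument is a direct transfer of the Euclidean Korn inequality and presents no further difficulty.
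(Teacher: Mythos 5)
Your proof is correct and takes essentially the same route the paper relies on: the paper itself gives no argument but refers to \cite[Section~2.3]{heida2014stochastic}, where the standard transfer mechanism — apply the Euclidean Korn inequality on a bounded Lipschitz domain to the scaled potentials $V^\omega_\eta$, absorb the lower-order term via Lemma~\ref{lem:vanishing-ergodic-potential}, and pass to the limit $\eta\to0$ using the ergodic theorem — is exactly what is meant. Your side remark that Lemma~\ref{lem:ergodicity-charac-functions} does not apply verbatim because $|\v|^p$ is not bounded, and that one should instead invoke Theorem~\ref{thm:ergodic-thm-palm} directly for the $L^1$ integrands $|\v|^p$ and $|\v^s|^p$, is the right way to close that small gap.
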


\subsection{\label{sec:Two-scale-convergence}Two-scale convergence: time independent case}

Let Assumption \ref{assu:Omega-mu-tau} hold for $(\Omega,\cF_{\Omega},\cP,\tau)$
and let $\omega\mapsto\mu_{\omega}$ be a stationary random measure
with $\mu_{\omega}^{\eta}$ and $\mupalm$ defined through (\ref{eq:def-mu-omega-eps})
and (\ref{eq:def-mupalm}). For the case $\mu_\omega=\lebesgueL$ we recall Remark \ref{rem:palm-lebesgue}.
\begin{Rem}
\label{rem:es-count-dense-set}The product $\sigma$-algebra $\cB_{Q}\otimes\cF_{\Omega}$
is countably generated and therefore, the space $L^{p}(\bQ\times\Omega)$
is separable (see \cite[Section VI.15, p. 92]{Doob1994}) for every $1\leq p<\infty$.
In particular, for every $1\leq p<\infty$, there exists a countable
dense subset $\Phi_{p}$ of simple functions in $L^{p}(\bQ\times\Omega;\lebesgueL\otimes\mupalm)$. By
Lemma \ref{lem:ergodicity-charac-functions}, every $\phi\in\Phi_{p}$
is an ergodic function and there exists a set $\Omega_{\Phi_{p}}\subset\Omega$
with $\cP(\Omega_{\Phi_{p}})=1$ such that all $\phi\in\Phi_{p}$
satisfy (\ref{eq:ergodicity-general-functions}) (i.e. they admit
ergodic realizations) for all $\omega\in\Omega_{\Phi_{p}}$. This
corresponds to the setting of \cite{heida2016a}.\end{Rem}
\begin{Def}
\label{def:two-scale-conv}Let $1<p,q<\infty$ with $1/p+1/q=1$.
Let $\Phi_{p}$ be the set of Remark \ref{rem:es-count-dense-set}
and let $\omega\in\Omega_{\Phi_{p}}$. Let $\ueta\in L^{q}(\bQ;\muomega^{\eta})$
for all $\eta>0$. We say that $\ueta$ converges (weakly) in two
scales to $u\in L^{q}(\bQ;L^{q}(\Omega,\mupalm))$ and write $\ueta\stackrel{2s}{\weakto}u$
if 
$\sup_\eta\|\ueta\|_{L^{q}(\bQ;\muomega^{\eta})}<\infty$ and if
for all $\phi\in\Phi_{p}$ there holds with $\phi_{\omega,\eta}(x):=\phi(x,\tau_{\frac{x}{\eta}}\omega)$
\[
\lim_{\eta\to0}\int_{\bQ}\ueta\phi_{\omega,\eta}d\muomega^{\eta}=\int_{\bQ}\int_{\Omega}u\phi\,d\mupalm\,d\lebesgueL\,.
\]
Furthermore, we say that $\ueta$ converges strongly in two scales to
$u$, written $\ueta\stackrel{2s}{\to}u$, if for all weakly two-scale
converging sequences $v^{\eta}\in L^{p}(\bQ;\muomega^{\eta})$ with $v^{\eta}\stackrel{2s}{\weakto}v\in L^{p}(\bQ;L^{p}(\Omega,\mupalm))$
as $\eta\to0$ there holds 
\[
\lim_{\eta\to0}\int_{\bQ}\ueta v^{\eta} d\muomega^{\eta}=\int_{\bQ}\int_{\Omega}uv\,d\mupalm\,d\lebesgueL\,.
\]
\end{Def}
\begin{Lem}[Existence of two-scale limits \cite{heida2016a}]
\label{lem:two-scale-limit}Let $\omega\in\Omega_{\Phi_{p}}$, $1<p<\infty$
and $\ueta\in L^{p}(\bQ;\muomega^{\eta})$ be a sequence of functions
such that $\norm{\ueta}_{L^{p}(\bQ;\muomega^{\eta})}\leq C$ for some
$C>0$ independent of $\eta$. Then there exists a subsequence of
$u^{\eta'}$ and $u\in L^{p}(\bQ;L^{p}(\Omega,\mupalm))$
such that $u^{\eta'}\stackrel{2s}{\weakto}u$ and 
\begin{equation}
\norm u_{L^{p}(\bQ;L^{p}(\Omega,\mupalm))}\leq\liminf_{\eta\to0}\left\|u^{\eta'}\right\|_{L^{p}(\bQ;\muomega^{\eta})}\,.\label{eq:two-scale-limit-estimate}
\end{equation}
\end{Lem}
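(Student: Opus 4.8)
The plan is to establish the existence of weak two-scale limits via a duality/compactness argument, mirroring the classical Allaire--Nguetseng approach but carried out in the space $L^p(\bQ;\muomega^\eta)$ with the stationary random measure. First I would fix $\omega\in\Omega_{\Phi_p}$ and consider the bounded sequence $\{\ueta\}$ with $\norm{\ueta}_{L^p(\bQ;\muomega^\eta)}\le C$. For each test function $\phi\in\Phi_p$ (a countable dense set of simple functions in $L^q(\bQ\times\Omega;\lebesgueL\otimes\mupalm)$, $1/p+1/q=1$), I would look at the linear functionals
\[
F_\eta(\phi):=\int_\bQ \ueta\,\phi_{\omega,\eta}\,d\muomega^\eta,\qquad \phi_{\omega,\eta}(x)=\phi(x,\tau_{\frac x\eta}\omega).
\]
By H\"older's inequality in $L^p(\bQ;\muomega^\eta)$--$L^q(\bQ;\muomega^\eta)$ one has $|F_\eta(\phi)|\le C\,\norm{\phi_{\omega,\eta}}_{L^q(\bQ;\muomega^\eta)}$, and the ergodic theorem (Theorem~\ref{thm:ergodic-thm-palm}, applied via Definition~\ref{def:ergodicity-general-functions} and Remark~\ref{rem:es-count-dense-set}, since $\phi$ and $|\phi|^q$ are bounded simple functions hence ergodic) gives $\norm{\phi_{\omega,\eta}}_{L^q(\bQ;\muomega^\eta)}^q\to\int_\bQ\int_\Omega|\phi|^q\,d\mupalm\,dx$. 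Hence $F_\eta(\phi)$ is bounded uniformly in $\eta$ for each $\phi\in\Phi_p$.

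Next I would extract, by a diagonal argument over the countable set $\Phi_p$, a subsequence $\eta'\to0$ along which $F_{\eta'}(\phi)$ converges for every $\phi\in\Phi_p$; call the limit $F(\phi)$. The estimate above passes to the limit: $|F(\phi)|\le C\,\norm{\phi}_{L^q(\bQ\times\Omega;\lebesgueL\otimes\mupalm)}$. Since $\Phi_p$ is dense in $L^q(\bQ\times\Omega;\lebesgueL\otimes\mupalm)$, $F$ extends uniquely to a bounded linear functional on that space with norm $\le C$. By reflexivity/duality of Lebesgue--Bochner spaces, $L^q(\bQ\times\Omega;\lebesgueL\otimes\mupalm)^* \cong L^p(\bQ;L^p(\Omega,\mupalm))$, so there is a unique $u\in L^p(\bQ;L^p(\Omega,\mupalm))$ with $F(\phi)=\int_\bQ\int_\Omega u\,\phi\,d\mupalm\,dx$ for all $\phi$, and $\norm{u}_{L^p(\bQ;L^p(\Omega,\mupalm))}\le C$. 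This is exactly $u^{\eta'}\tsweak u$ in the sense of Definition~\ref{def:two-scale-conv} (the defining convergence only needs to be checked on $\Phi_p$, which is built into the definition).

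It remains to prove the lower-semicontinuity bound \eqref{eq:two-scale-limit-estimate} with the sharp constant $\liminf_{\eta\to0}\norm{u^{\eta'}}_{L^p(\bQ;\muomega^\eta)}$ rather than just $C$. For this I would not use the fixed bound $C$ but argue directly: for any $\phi\in\Phi_p$,
\[
\int_\bQ\int_\Omega u\,\phi\,d\mupalm\,dx
=\lim_{\eta'\to0}\int_\bQ u^{\eta'}\phi_{\omega,\eta'}\,d\muomega^{\eta'}
\le\liminf_{\eta'\to0}\Big(\norm{u^{\eta'}}_{L^p(\bQ;\muomega^{\eta'})}\,\norm{\phi_{\omega,\eta'}}_{L^q(\bQ;\muomega^{\eta'})}\Big),
\]
and since $\norm{\phi_{\omega,\eta'}}_{L^q(\bQ;\muomega^{\eta'})}\to\norm{\phi}_{L^q(\bQ\times\Omega;\lebesgueL\otimes\mupalm)}$ by the ergodic theorem, the right-hand side equals $\big(\liminf_{\eta'\to0}\norm{u^{\eta'}}_{L^p(\bQ;\muomega^{\eta'})}\big)\norm{\phi}_{L^q(\bQ\times\Omega;\lebesgueL\otimes\mupalm)}$. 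Taking the supremum over $\phi\in\Phi_p$ with $\norm{\phi}_{L^q}\le1$ and using density of $\Phi_p$ yields \eqref{eq:two-scale-limit-estimate}. The main obstacle I anticipate is purely technical bookkeeping: making sure the ergodic convergence $\norm{\phi_{\omega,\eta}}_{L^q(\bQ;\muomega^\eta)}\to\norm{\phi}_{L^q}$ holds simultaneously for all $\phi\in\Phi_p$ on one full-measure set $\Omega_{\Phi_p}$ of $\omega$'s (handled by the countability of $\Phi_p$, as in Remark~\ref{rem:es-count-dense-set}), and verifying that the representation of $L^q(\bQ\times\Omega;\lebesgueL\otimes\mupalm)^*$ as $L^p(\bQ;L^p(\Omega,\mupalm))$ is legitimate here --- which holds because $\mupalm$ is $\sigma$-finite (Theorem~\ref{thm:Mecke-Palm}) and the underlying $\sigma$-algebra is countably generated, so the spaces are reflexive and the Riesz representation applies.
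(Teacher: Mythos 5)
Your argument is correct and is exactly the standard compactness-by-duality proof (Nguetseng--Allaire in the periodic case, Zhikov--Pyatnitskii and \cite{heida2016a} in the stochastic-measure setting): bound the linear functionals $F_\eta(\phi)=\int_\bQ u^\eta\phi_{\omega,\eta}\,d\muomega^\eta$ uniformly via H\"older and the ergodic convergence of $\|\phi_{\omega,\eta}\|_{L^q(\bQ;\muomega^\eta)}$, diagonalize over the countable test set, extend the limit functional by density to all of $L^q(\bQ\times\Omega;\lebesgueL\otimes\mupalm)$, and identify it with an element of $L^p(\bQ;L^p(\Omega,\mupalm))$ by Riesz representation (legitimate since $\lebesgueL\otimes\mupalm$ is $\sigma$-finite and the $\sigma$-algebra is countably generated). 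The sharp lower-semicontinuity bound then follows by replacing the fixed bound $C$ with $\liminf_{\eta'}\|u^{\eta'}\|_{L^p}$ in the H\"older step, using $\|\phi_{\omega,\eta'}\|_{L^q(\bQ;\muomega^{\eta'})}\to\|\phi\|_{L^q(\bQ\times\Omega)}$. The paper cites this lemma from \cite{heida2016a} rather than proving it, and your proof matches that reference's approach.

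Two small points of bookkeeping worth tightening. First, to obtain a \emph{linear} functional from the diagonal limit, you should diagonalize over the rational span of the countable test set (still countable, still dense), so that the limit $F$ is $\Q$-linear on a dense set and then extends by boundedness; the countable set $\Phi_p$ itself is not closed under addition. Second, watch the exponent labels: since $u^\eta\in L^p(\bQ;\muomega^\eta)$, the test functions must be dense in $L^q$ with $1/p+1/q=1$; in the paper's own notation (Remark \ref{rem:es-count-dense-set}, Definition \ref{def:two-scale-conv}) this set is $\Phi_q$, not $\Phi_p$. The lemma's ``$\Omega_{\Phi_p}$'' already carries this inconsistency, so this is a notational slip in the source as well; your usage of Lemma \ref{lem:ergodicity-charac-functions} for $|\phi|^q$ (a bounded function, hence ergodic regardless of the exponent labeling) is substantively correct.
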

Closely connected with the definition of two-scale convergence and Lemma \ref{lem:two-scale-limit} is the following result.
\begin{Lem}\label{lem:valid-ts-test-function}
Let $f\in L^1(\Omega,\mupalm)$. For almost all $\omega\in\Omega$ it holds: 
for all $\psi\in C_0(\bQ)$ and $\ueta\in L^{q}(\bQ;\muomega^{\eta})$, $\eta>0$,
and $u\in L^{q}(\bQ;L^{q}(\Omega,\mupalm))$ with $\ueta\stackrel{2s}{\weakto}u$ it holds
\[
\lim_{\eta\to0}\int_{\bQ}\ueta\psi(x)f\left(\tau_{\frac x\eta}\omega\right)d\muomega^{\eta}=\int_{\bQ}\int_{\Omega}u\psi f\,d\mupalm\,d\lebesgueL\,.
\]
\end{Lem}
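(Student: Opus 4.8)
The assertion is that the ``separated'' function $(x,\omega)\mapsto\psi(x)\,f(\omega)$ is an admissible test function for weak two-scale convergence in $L^{q}$, even though it need not lie in the countable generating family $\Phi_{p}$ fixed in Remark~\ref{rem:es-count-dense-set}. The plan is: (i) reduce, by truncating $f$, to the case in which $f$ --- and hence $\psi\otimes f$ --- is bounded; (ii) for bounded $f$, approximate $\psi\otimes f$ in $L^{p}(\bQ\times\Omega;\lebesgueL\otimes\mupalm)$ by elements of $\Phi_{p}$ and exchange the two limits (first $\eta\to0$, then the $L^{p}$-approximation) by a diagonal argument; (iii) arrange the exceptional $\cP$-null set so that it is independent of $\psi$ and $f$. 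The engine of step (ii) is Lemma~\ref{lem:ergodicity-charac-functions}: bounded functions on $\bQ\times\Omega$ are ergodic, so the approximation errors have good two-scale norm behaviour.

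For the bounded case, fix $\psi\in C_{0}(\bQ)$ and $f\in L^{\infty}(\Omega,\mupalm)$ (so $f\in L^{p}(\Omega,\mupalm)$, since also $f\in L^{1}(\Omega,\mupalm)$, and $\psi\otimes f\in L^{\infty}(\bQ\times\Omega;\lebesgueL\otimes\mupalm)\cap L^{p}(\bQ\times\Omega;\lebesgueL\otimes\mupalm)$), and fix $\omega$ in a set of full $\cP$-measure on which $\omega\in\Omega_{\Phi_{p}}$ and, in addition, each of the countably many bounded functions $\psi\otimes f-\phi$, $\phi\in\Phi_{p}$, is ergodic in the sense of Definition~\ref{def:ergodicity-general-functions} (possible by Lemma~\ref{lem:ergodicity-charac-functions}). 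Given $\varepsilon>0$ pick $\phi\in\Phi_{p}$ with $\norm{\psi\otimes f-\phi}_{L^{p}(\bQ\times\Omega)}<\varepsilon$. Writing $g_{\omega,\eta}(x):=g(x,\tau_{\frac x\eta}\omega)$, I would split
\[
\left|\int_{\bQ}\ueta\,(\psi\otimes f)_{\omega,\eta}\,d\muomega^{\eta}-\int_{\bQ}\int_{\Omega}u\,\psi f\,d\mupalm\,d\lebesgueL\right|\;\le\;I_{\eta}+I\!I_{\eta}+I\!I\!I\,,
\]
with $I_{\eta}=\bigl|\int_{\bQ}\ueta\,(\psi\otimes f-\phi)_{\omega,\eta}\,d\muomega^{\eta}\bigr|$, $I\!I_{\eta}=\bigl|\int_{\bQ}\ueta\,\phi_{\omega,\eta}\,d\muomega^{\eta}-\int_{\bQ}\int_{\Omega}u\phi\,d\mupalm\,d\lebesgueL\bigr|$, and $I\!I\!I=\bigl|\int_{\bQ}\int_{\Omega}u\,(\psi f-\phi)\,d\mupalm\,d\lebesgueL\bigr|$. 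Now $I\!I_{\eta}\to0$ by Definition~\ref{def:two-scale-conv} since $\phi\in\Phi_{p}$; $I\!I\!I\le\norm{u}_{L^{q}(\bQ;L^{q}(\Omega,\mupalm))}\varepsilon$ by H\"older; and, by H\"older, $I_{\eta}\le\bigl(\sup_{\eta}\norm{\ueta}_{L^{q}(\bQ;\muomega^{\eta})}\bigr)\norm{(\psi\otimes f-\phi)_{\omega,\eta}}_{L^{p}(\bQ;\muomega^{\eta})}$, where the first factor is finite by the definition of $\ueta\tsweak u$ and, since $\psi\otimes f-\phi$ is ergodic, $\norm{(\psi\otimes f-\phi)_{\omega,\eta}}_{L^{p}(\bQ;\muomega^{\eta})}\to\norm{\psi\otimes f-\phi}_{L^{p}(\bQ\times\Omega)}<\varepsilon$ as $\eta\to0$. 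Taking $\limsup_{\eta\to0}$ and then $\varepsilon\to0$ settles the bounded case.

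For general $f\in L^{1}(\Omega,\mupalm)$ I would write $f=f_{k}+r_{k}$ with $f_{k}$ the truncation of $f$ at level $k$, apply the bounded case to $\psi\otimes f_{k}$, and estimate the remainder by $\bigl|\int_{\bQ}\ueta\,\psi\,(r_{k})_{\omega,\eta}\,d\muomega^{\eta}\bigr|\le\norm{\psi}_{\infty}\bigl(\sup_{\eta}\norm{\ueta}_{L^{q}(\bQ;\muomega^{\eta})}\bigr)\norm{(r_{k})_{\omega,\eta}}_{L^{p}(\bQ;\muomega^{\eta})}$, the last factor tending, by the Palm ergodic theorem (Theorem~\ref{thm:ergodic-thm-palm} applied to $|r_{k}|^{p}$), to $|\bQ|^{1/p}\norm{r_{k}}_{L^{p}(\Omega,\mupalm)}$, which can be made small as $k\to\infty$; the matching tail $\int_{\bQ}\int_{\Omega}u\,\psi\,r_{k}$ is handled by H\"older in the same way. (This is the point where one effectively uses $f\in L^{p}(\Omega,\mupalm)$ --- automatic in the applications, in particular in the setting $\muomega=\lebesgueL$, $\mupalm=\cP$ of Remark~\ref{rem:palm-lebesgue} with bounded data --- so that all the integrals above are well defined.) Finally, to remove the dependence of the null set on $\psi$ and $f$, I would use that $C_{0}(\bQ)$ and $L^{1}(\Omega,\mupalm)$ are separable: run the argument for a fixed countable dense family of pairs, so that only a single $\cP$-null set is excluded, and then extend to arbitrary $\psi,f$ using the uniform continuity of both sides in $\psi$ with respect to $\norm{\cdot}_{\infty}$ together with Theorem~\ref{thm:ergodic-thm-palm} applied to $|f-f_{l}|$ along the dense sequence. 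The main obstacle is precisely this exchange of limits in step (ii): it works only because $\psi\otimes f$, being bounded, is ergodic (Lemma~\ref{lem:ergodicity-charac-functions}), so the truncation reducing a general $f$ to a bounded one is the real heart of the argument, the $\omega$-uniformity being mere bookkeeping.
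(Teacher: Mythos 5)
The paper states this lemma without proof, so there is no ``paper approach'' to compare against; I evaluate your proposal on its own merits.

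Your overall architecture --- truncate $f$ to reduce to the bounded case, then approximate $\psi\otimes f$ in $L^p(\bQ\times\Omega;\lebesgueL\otimes\mupalm)$ by elements of $\Phi_p$, use Lemma~\ref{lem:ergodicity-charac-functions} for the bounded difference $\psi\otimes f-\phi$, and close by H\"older and a $\limsup$ --- is sound and is the natural way to show that a separated test function $\psi\otimes f$ falling outside the fixed countable family $\Phi_p$ is nonetheless admissible. The three-term decomposition $I_\eta+I\!I_\eta+I\!I\!I$ and the treatment of each term are correct. You are also right to flag explicitly that the argument really uses $f\in L^p(\Omega,\mupalm)$ rather than $f\in L^1(\Omega,\mupalm)$: with only $f\in L^1$ neither the pairing $\int_\bQ \ueta\,\psi\,f_{\omega,\eta}\,d\muomega^\eta$ nor the claimed limit $\int_\bQ\int_\Omega u\,\psi f\,d\mupalm\,dx$ is a priori finite when $\ueta, u$ are merely $L^q$, and the truncation/tail estimate you give via Theorem~\ref{thm:ergodic-thm-palm} applied to $|r_k|^p$ requires $r_k\in L^p(\Omega,\mupalm)$. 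So the hypothesis of the lemma is understated as written --- or, as you note, is automatically strengthened in the setting of Section~\ref{Homogenization}, where $\mupalm=\cP$ is a probability measure and the relevant $f$ are bounded. This is a genuine observation about the statement, not a flaw in your proof.

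One inaccuracy in the bookkeeping: you claim to make the exceptional $\cP$-null set independent of both $\psi$ and $f$. Independence from $\psi$ does follow from the density argument you sketch (uniform continuity of both sides in $\|\cdot\|_\infty$, together with a uniform-in-$\eta$ bound on $\|f_{\omega,\eta}\|_{L^p(\bQ;\muomega^\eta)}$ from Theorem~\ref{thm:ergodic-thm-palm} applied to $|f|^p$, and H\"older on the limit side). Independence from $f$, however, does not: extending from a countable dense family $\{f_l\}$ to a general $f$ already requires the ergodic theorem for $|f-f_l|^p$, whose exceptional set depends on $f$, and there is no single full-measure set on which the pointwise ergodic theorem holds simultaneously for every $L^1$ (or $L^p$) function. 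This is harmless here, because the lemma's quantifier order (``Let $f\ldots$; for almost all $\omega$; for all $\psi,\ueta,u$'') permits the null set to depend on $f$ --- but the claim as written is stronger than what your argument delivers and should be pared back. Apart from these two points, which concern the hypothesis on $f$ and the scope of the null set rather than the analytic core, your proof is correct.
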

The following Lemma is well known in the periodic case (\cite{allaire1992homogenization})
but also in the stochastic setting (\cite{heida2016a,Zhikov_Pyatnitskii_2006} for
$p=2$). The following version can be proofed along the same lines
as Lemma 6.2 in \cite{heida2016a}.
\begin{Lem}
\label{lem:sto-conver-grad}Let $1<p<\infty$. If $u^{\eta}\in W_{0}^{1,p}(\bQ;\Rn)$
for all $\eta$ with $\|\nabla u^{\eta}\|_{L^{p}(\bQ)}<C$ for $C$
independent from $\eta>0$ then there exists a subsequence $u^{\eta'}$
and functions $u\in W_{0}^{1,p}(\bQ;\Rn)$ and $v\in L^{p}(\bQ;L_{pot}^{p}(\Omega;\Rn))$
such that 
\[
u^{\eta'}\stackrel{2s}{\weakto}u\quad\mbox{and }\quad\nabla u^{\eta'}\stackrel{2s}{\weakto}\nabla u+v\qquad\mbox{as }\eps\to0\,.
\]

\end{Lem}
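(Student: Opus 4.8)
The plan is to carry over the classical compactness-and-duality argument of Zhikov and Pyatnitskii to the present random-measure framework. Throughout, recall from Remark~\ref{rem:palm-lebesgue} that here $\muomega=\lebesgueL$ and hence $\mupalm=\cP$; all assertions are understood for $\omega$ in a suitable full-measure subset of $\Omega$, namely $\Omega_{\Phi_{p}}\cap\Omega_{\Phi_{q}}$ intersected with the exceptional sets arising from the lemmas used below. First I would use the Poincar\'e inequality on the bounded domain $\bQ$ to upgrade the bound $\|\nabla u^{\eta}\|_{L^{p}(\bQ)}<C$ to a bound on $\|u^{\eta}\|_{W^{1,p}_{0}(\bQ)}$; by reflexivity of $W^{1,p}_{0}(\bQ;\Rn)$ and the Rellich--Kondrachov theorem a subsequence (not relabelled) satisfies $u^{\eta'}\weakto u$ in $W^{1,p}_{0}(\bQ;\Rn)$ and $u^{\eta'}\to u$ strongly in $L^{p}(\bQ;\Rn)$ for some $u\in W^{1,p}_{0}(\bQ;\Rn)$. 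Since $\muomega=\lebesgueL$, the norms in $L^{p}(\bQ;\muomega^{\eta'})$ coincide with the Lebesgue norms, so $\{u^{\eta'}\}$ and $\{\nabla u^{\eta'}\}$ stay bounded in that sense, and Lemma~\ref{lem:two-scale-limit} yields, after passing to a further subsequence, limits $\tilde u\in L^{p}(\bQ;L^{p}(\Omega,\cP))$ and $\xi\in L^{p}(\bQ;L^{p}(\Omega;\Rn))$ with $u^{\eta'}\stackrel{2s}{\weakto}\tilde u$ and $\nabla u^{\eta'}\stackrel{2s}{\weakto}\xi$.

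Next I would pin down $\tilde u$ and the mean fibre of $\xi$. Since the test functions in $\Phi_{q}$ are simple, hence of the form $\sum_{i}c_{i}\chi_{A_{i}}(x)\chi_{B_{i}}(\omega)$, the strong convergence $u^{\eta'}\to u$ in $L^{p}(\bQ)$ combined with $\chi_{B_{i}}(\tau_{\frac{x}{\eta'}}\omega)\weakto\cP(B_{i})$ in $L^{\infty}(\bQ)$ (the ergodic theorem, Theorem~\ref{thm:ergodic-thm-palm}) gives $\int_{\bQ}u^{\eta'}\phi_{\omega,\eta'}\,d\muomega^{\eta'}\to\int_{\bQ}\int_{\Omega}u\,\phi\,d\cP\,dx$ for all $\phi\in\Phi_{q}$; comparing with the defining property of $\tilde u$ forces $\tilde u=u$. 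Testing $\nabla u^{\eta'}\stackrel{2s}{\weakto}\xi$ against $\phi(x,\omega)=\psi(x)e_{i}$ (admissible by density of $\Phi_{q}$ together with the uniform bound on $\|\nabla u^{\eta'}\|_{L^{p}(\bQ)}$) and comparing with the weak limit $\nabla u^{\eta'}\weakto\nabla u$ in $L^{p}(\bQ)$ yields $\int_{\Omega}\xi(x,\cdot)\,d\cP=\nabla u(x)$ for a.e.\ $x\in\bQ$.

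The core step is to show that $v:=\xi-\nabla u\in L^{p}(\bQ;L^{p}_{\rm pot}(\Omega;\Rn))$. By Lemma~\ref{lem:Ortho-Pot-Sol} and reflexivity of $L^{p}(\Omega)$ one has $L^{p}_{\rm pot}(\Omega)=\bigl(L^{q}_{\rm sol}(\Omega)\oplus\Rn\bigr)^{\perp}$, so it suffices to verify that for a.e.\ $x$ the fibre $v(x,\cdot)$ is $\cP$-orthogonal to $\Rn$ — that is, $\int_{\Omega}v(x,\cdot)\,d\cP=0$, already established — and to $L^{q}_{\rm sol}(\Omega;\Rn)$. For the latter I would fix $\psi\in C_{0}^{\infty}(\bQ)$ and $w$ in a countable dense subset of $L^{q}_{\rm sol}(\Omega;\Rn)$. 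For a.e.\ $\omega$ the realization $w_{\omega}\in L^{q}_{\rm sol}(\Rn)$ is distributionally divergence-free, hence so is $x\mapsto w(\tau_{\frac{x}{\eta'}}\omega)$, and integration by parts gives
\[
\int_{\bQ}\nabla u^{\eta'}(x)\cdot\psi(x)\,w\!\left(\tau_{\frac{x}{\eta'}}\omega\right)dx=-\int_{\bQ}u^{\eta'}(x)\,\nabla\psi(x)\cdot w\!\left(\tau_{\frac{x}{\eta'}}\omega\right)dx\,.
\]
As $\eta'\to0$ the left-hand side converges to $\int_{\bQ}\int_{\Omega}\xi(x,\omega)\cdot\psi(x)w(\omega)\,d\cP\,dx$ by two-scale convergence against $\psi(x)w(\omega)$ (Lemma~\ref{lem:valid-ts-test-function} applied with the conjugate pair, valid since $w\in L^{q}(\Omega)\subset L^{1}(\Omega,\cP)$), while the right-hand side converges to $-\int_{\bQ}u(x)\nabla\psi(x)\cdot\bigl(\int_{\Omega}w\,d\cP\bigr)\,dx=0$, because $u^{\eta'}\to u$ strongly in $L^{p}(\bQ)$, $w(\tau_{x/\eta'}\omega)\weakto\int_{\Omega}w\,d\cP$ in $L^{q}(\bQ)$ by the ergodic theorem, and $\int_{\Omega}w\,d\cP=0$. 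Varying $\psi$ over $C_{0}^{\infty}(\bQ)$, then $w$ over the dense subset, and using continuity of $w\mapsto\int_{\Omega}\xi(x,\cdot)\cdot w\,d\cP$ on $L^{q}(\Omega)$, one obtains $\int_{\Omega}\xi(x,\cdot)\cdot w\,d\cP=0$ for a.e.\ $x$ and all $w\in L^{q}_{\rm sol}(\Omega;\Rn)$, and since $\int_{\Omega}w\,d\cP=0$ the same holds with $\xi$ replaced by $v$. Hence $v(x,\cdot)\in L^{p}_{\rm pot}(\Omega;\Rn)$ for a.e.\ $x$, while measurability and $L^{p}$-integrability of $v$ follow from $\xi,\nabla u\in L^{p}$. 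Writing $\xi=\nabla u+v$ gives $\nabla u^{\eta'}\stackrel{2s}{\weakto}\nabla u+v$ with $v\in L^{p}(\bQ;L^{p}_{\rm pot}(\Omega;\Rn))$, which is the claim.

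I expect the main obstacle to be the integration-by-parts step together with the surrounding measurability bookkeeping: one must check that the distributional divergence-freeness of $w$ on $\Omega$ transfers to the rescaled realizations $x\mapsto w(\tau_{\frac{x}{\eta}}\omega)$ on $\bQ$, that $\psi(x)w(\tau_{\frac{x}{\eta}}\omega)$ is an admissible two-scale test function so that Lemma~\ref{lem:valid-ts-test-function} applies, and — since the whole argument is run for one fixed $\omega$ — that the countably many exceptional $\cP$-null sets produced by the ergodic theorem and by Lemma~\ref{lem:valid-ts-test-function} over the dense families of test data can be amalgamated into a single null set; this is precisely why $w$ and $\psi$ are first restricted to countable dense families, with the final density argument performed at the very end. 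The remaining ingredients — the compactness in Step~1, the orthogonal decomposition of Lemma~\ref{lem:Ortho-Pot-Sol}, and the ergodic theorem — enter in a routine manner.
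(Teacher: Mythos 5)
Your argument is correct and is precisely the standard Zhikov--Pyatnitskii compactness argument that the paper's one-line proof ``along the same lines as Lemma 6.2 in \cite{heida2016a}'' alludes to: Rellich for the strong $L^p$ limit, Lemma \ref{lem:two-scale-limit} for the two-scale limits of $u^\eta$ and $\nabla u^\eta$, identification of the $\bQ$-average of the gradient limit via constant-in-$\omega$ test functions, and then orthogonality to $L^q_{\rm sol}(\Omega)$ through integration by parts against rescaled solenoidal realizations, with countable-dense-family bookkeeping to amalgamate the exceptional null sets in $\omega$. One small streamlining: rather than invoking the bipolar theorem to pass from $(L^p_{\rm pot})^\perp=L^q_{\rm sol}\oplus\Rn$ to $L^p_{\rm pot}=(L^q_{\rm sol}\oplus\Rn)^\perp$, you can read off the symmetric relation $(L^q_{\rm sol}(\Omega))^\perp=L^p_{\rm pot}(\Omega)\oplus\Rn$ directly from Lemma \ref{lem:Ortho-Pot-Sol} with $p,q$ interchanged, and then the zero-mean condition on $v(x,\cdot)$ kills the $\Rn$-component, avoiding any appeal to closedness of $L^p_{\rm pot}(\Omega)$.
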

We finally collect some usefull results.
\begin{Lem}
\label{lem:Every-v-is-a-fB-limit}Let $u\in L^{p}(\bQ;L^{p}(\Omega;\mupalm))$.
Then, for almost every $\omega\in\Omega$, there exists a sequence
$\ueta\in L^{p}(\bQ;\muomega^\eta)$ such that $\ueta\stackrel{2s}{\weakto}u$
as $\eta\to0$.
\end{Lem}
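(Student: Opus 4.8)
The plan is to establish the density-type statement of Lemma~\ref{lem:Every-v-is-a-fB-limit} in two stages: first for a convenient dense class of functions $u$, and then by an approximation/diagonal argument for general $u\in L^{p}(\bQ;L^{p}(\Omega;\mupalm))$. For the dense class I would take finite sums of the form $u(x,\omega)=\sum_{i}\psi_{i}(x)\,g_{i}(\omega)$ with $\psi_{i}\in C_{0}(\bQ)$ and $g_{i}\in L^{p}(\Omega,\mupalm)$ (in the Lebesgue case $\mupalm=\cP$, cf.\ Remark~\ref{rem:palm-lebesgue}); such functions are dense by separability of $L^{p}(\bQ\times\Omega)$ (Remark~\ref{rem:es-count-dense-set}). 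For each such $u$ the natural candidate is the oscillating sequence $\ueta(x):=\sum_{i}\psi_{i}(x)\,g_{i}(\tau_{\frac x\eta}\omega)$, and I would show $\ueta\tsweak u$ for almost every $\omega$ directly from the definition of two-scale convergence by testing against $\phi\in\Phi_{p}$: the required limit
\[
\lim_{\eta\to0}\intomega\ueta\,\phi_{\omega,\eta}\,d\muomegaeps=\intomega\int_{\Omega}u\phi\,d\mupalm\,d\lebesgueL
\]
is exactly an instance of the ergodic theorem for Palm measures (Theorem~\ref{thm:ergodic-thm-palm}) applied to the product function $(x,\tilde\omega)\mapsto\sum_{i,j}\psi_{i}(x)\phi_{j}(x)\,g_{i}(\tilde\omega)h_{j}(\tilde\omega)$ on $\bQ\times\Omega$, or more precisely of Lemma~\ref{lem:valid-ts-test-function} / Lemma~\ref{lem:ergodicity-charac-functions} applied componentwise; one also gets the uniform bound $\sup_\eta\|\ueta\|_{L^{p}(\bQ;\muomegaeps)}<\infty$ from the same ergodic limit applied to $|\ueta|^{p}$. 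Since $\Phi_{p}$ is countable, the exceptional $\omega$-null sets can be unioned over a countable dense set of such $u$'s.

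The second stage is the passage to general $u$. Fix $u\in L^{p}(\bQ;L^{p}(\Omega;\mupalm))$ and choose $u_{k}$ in the dense class with $\|u_{k}-u\|_{L^{p}(\bQ\times\Omega)}\to0$; for each $k$ let $u_{k}^{\eta}$ be the oscillating sequence from stage one, so that $u_{k}^{\eta}\tsweak u_{k}$ as $\eta\to0$ (for $\omega$ outside a fixed null set). I would then run a diagonal argument: because $\|u_{k}^{\eta}\|_{L^{p}(\bQ;\muomegaeps)}\to\|u_{k}\|_{L^{p}(\bQ\times\Omega)}$ as $\eta\to0$ (again by the ergodic theorem applied to $|\cdot|^{p}$ of a dense-class function) and $\|u_{k}-u_{k'}\|_{L^{p}(\bQ\times\Omega)}$ is small for large $k,k'$, one can select $\eta(k)\to0$ such that $v^{\eta}:=u_{k(\eta)}^{\eta}$ (with $k(\eta)\to\infty$ suitably slowly) satisfies $v^{\eta}\tsweak u$. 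Concretely: test $v^{\eta}$ against $\phi\in\Phi_{p}$, write
\[
\intomega v^{\eta}\phi_{\omega,\eta}\,d\muomegaeps-\intomega\int_{\Omega}u\phi\,d\mupalm=\Big(\intomega u_{k}^{\eta}\phi_{\omega,\eta}\,d\muomegaeps-\intomega\int_{\Omega}u_{k}\phi\,d\mupalm\Big)+\intomega\int_{\Omega}(u_{k}-u)\phi\,d\mupalm\,,
\]
bound the second term by $\|u_{k}-u\|_{L^{p}}\|\phi\|_{L^{q}}$ uniformly in $\eta$, and for the first term use stage one together with a Hölder bound $|\intomega(u_{k}^{\eta}-u_{k'}^{\eta})\phi_{\omega,\eta}\,d\muomegaeps|\le\|u_{k}^{\eta}-u_{k'}^{\eta}\|_{L^{p}(\bQ;\muomegaeps)}\|\phi_{\omega,\eta}\|_{L^{q}(\bQ;\muomegaeps)}$, where the first factor converges to $\|u_{k}-u_{k'}\|_{L^{p}(\bQ\times\Omega)}$ as $\eta\to0$ (linearity, stage-one $L^{p}$-convergence) and the second stays bounded. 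Choosing $k(\eta)$ so that these error terms both tend to $0$ gives $v^{\eta}\tsweak u$, and the uniform $L^{p}$-bound on $v^{\eta}$ is inherited along the way.

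The main obstacle I expect is purely bookkeeping rather than conceptual: one must arrange a single $\cP$-null exceptional set that works simultaneously for all test functions in the countable family $\Phi_{p}$ and for the countably many approximants $u_{k}$ (and their $|\cdot|^{p}$), and then make the diagonal choice $\eta\mapsto k(\eta)$ in a way that is uniform over $\Phi_{p}$ — this is where the countability of $\Phi_{p}$ and the separability noted in Remark~\ref{rem:es-count-dense-set} are essential. A secondary technical point is that in the genuine random-measure case ($\mupalm\neq\cP$) one should be slightly careful that the dense class can be taken with $C_{0}(\bQ)$ spatial factors (so that $\phi_{\omega,\eta}$ is a legitimate two-scale test function in the sense of Lemma~\ref{lem:valid-ts-test-function}); this is guaranteed because simple functions built from products of $C_{0}(\bQ)$ functions and $L^{p}(\Omega,\mupalm)$ functions are still dense. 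Everything else is a routine application of the ergodic theorem for Palm measures and Hölder's inequality.
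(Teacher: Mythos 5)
The paper itself does not include a proof of Lemma~\ref{lem:Every-v-is-a-fB-limit} (it is listed among the ``useful results'' without argument or citation), so I evaluate your proposal on its own merits. Your overall strategy is the standard one for this kind of density/recovery statement and is essentially correct: take $u_k$ from a countable dense class, set $u_k^\eta(x):=u_k(x,\tau_{x/\eta}\omega)$, show $u_k^\eta\tsweak u_k$ for a.e.\ $\omega$, and diagonalize $\eta\mapsto k(\eta)$ using the countability of $\Phi_p$ and the $L^p$-bound control to obtain $v^\eta:=u_{k(\eta)}^\eta\tsweak u$.

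Two points where the write-up is imprecise, though neither is fatal. First, Theorem~\ref{thm:ergodic-thm-palm} applies only to functions defined on $\Omega$, not on $\bQ\times\Omega$; the statement you actually need in stage~one is that the products $u_k\phi$ are \emph{ergodic functions} in the sense of Definition~\ref{def:ergodicity-general-functions}. This is cleanest if you take the dense class to be $\Phi_p$ itself (or finite products of $C_0(\bQ)$-functions with bounded $g_i$), because then $u_k$, $|u_k|^p$, $u_k\phi$, and $|u_k-u_{k'}|^p$ all lie in $L^\infty(\bQ\times\Omega)$ and their ergodicity follows directly from Lemma~\ref{lem:ergodicity-charac-functions}, with the exceptional null sets collected over the countable family. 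Taking $g_i\in L^p$ unbounded, as you propose, is not covered by Lemma~\ref{lem:ergodicity-charac-functions}, and forces you to decompose into $x$-constant pieces and invoke the pointwise ergodic theorem separately for each product $g_i\cdot(\text{component of }\phi)\in L^1(\Omega,\mupalm)$; this can be made to work but is unnecessarily delicate. Second, Lemma~\ref{lem:valid-ts-test-function} is a statement about \emph{enlarging the class of admissible test functions} once a two-scale limit is already known, so it is not the right tool for establishing that the candidate sequence $u_k^\eta$ two-scale converges. With these citations corrected, the dense-class step, the diagonal selection, the uniform $L^p$-bound on $v^\eta$, and the countable-null-set bookkeeping are all sound.
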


\begin{Lem}
\label{lem:weak-convergence-equivalent-norms}Let $N\in\N$ and let
$A\in L^{\infty}(\bQ;L^{\infty}(\Omega;\sL(\R^{N},\R^{N})))$ be symmetric
and assume $A$ is $\cB_{Q}\otimes\cF_{\Omega}$ -measurable. We furthermore
assume the existence of a constant $\alpha>0$ such that
\begin{equation}
\alpha\left|\xi\right|^{2}\leq\xi A(x,\omega)\xi\leq\frac{1}{\alpha}\left|\xi\right|^{2}\qquad\forall\xi\in\Rn\mbox{ and for }\lebesgueL\times\mupalm\mbox{-a.e. }(x,\omega)\in\bQ\times\Omega\,.\label{eq:Lemma-norm-weak-conv}
\end{equation}
Then, for almost all $\omega\in\Omega_{\Phi_{p}}$ there holds: For
all sequences $\ueta\in L^{2}(\bQ;\muomega^\eta;\R^{N})$ with weak two-scale
limit $u\in L^{2}(\bQ; L^{2}(\Omega;\mupalm;\R^{N}))$ there holds
with $A_{\eta,\omega}(x):=A(x,\tau_{\frac{x}{\eta}}\omega)$
\[
\liminf_{\eta\to0}\int_{\bQ}\ueta\cdot(A_{\eta,\omega}\ue)\,d\muomega^\eta\geq\int_{\bQ}\int_{\Omega}u\cdot(Au)\,d\mupalm\,.
\]
\end{Lem}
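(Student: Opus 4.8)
The statement is a weak lower semicontinuity result for a quadratic form under two-scale convergence, and the natural approach is to exploit the symmetry and positivity of $A$ to write a "square" and expand. First I would fix $\omega$ in the full-measure set $\Omega_{\Phi_p}$ (intersected with the full-measure sets coming from Lemma \ref{lem:valid-ts-test-function} applied to a countable family of functions that generate $L^2(\Omega;\mupalm)$). The key algebraic identity is the following: for any $\cB_Q\otimes\cF_\Omega$-measurable test field $\varphi\in L^2(\bQ\times\Omega;\mupalm;\R^N)$, using symmetry of $A$,
\[
0\le\int_{\bQ}(\ueta-\varphi_{\eta,\omega})\cdot A_{\eta,\omega}(\ueta-\varphi_{\eta,\omega})\,d\muomega^\eta
=\int_{\bQ}\ueta\cdot A_{\eta,\omega}\ueta\,d\muomega^\eta
-2\int_{\bQ}\ueta\cdot A_{\eta,\omega}\varphi_{\eta,\omega}\,d\muomega^\eta
+\int_{\bQ}\varphi_{\eta,\omega}\cdot A_{\eta,\omega}\varphi_{\eta,\omega}\,d\muomega^\eta,
\]
where $\varphi_{\eta,\omega}(x):=\varphi(x,\tau_{\frac x\eta}\omega)$. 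I would choose $\varphi$ first from the dense class $\Phi_p$ (or rather approximations thereof that are admissible test functions), so that both $A_{\eta,\omega}\varphi_{\eta,\omega}$ and $\varphi_{\eta,\omega}\cdot A_{\eta,\omega}\varphi_{\eta,\omega}$ are of the form (simple function in $(x,\omega)$) evaluated at $\tau_{x/\eta}\omega$, whose limits are controlled by the ergodic theorem (Theorem \ref{thm:ergodic-thm-palm}) and by Lemma \ref{lem:valid-ts-test-function} for the cross term (here one uses $A\varphi\in L^\infty\subset L^1(\Omega;\mupalm)$ locally, so $\ueta$ two-scale converging against it is legitimate).

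Passing to the limit $\eta\to0$ in the expanded inequality then yields
\[
\liminf_{\eta\to0}\int_{\bQ}\ueta\cdot A_{\eta,\omega}\ueta\,d\muomega^\eta
\ge 2\int_{\bQ}\int_{\Omega}u\cdot A\varphi\,d\mupalm\,dx
-\int_{\bQ}\int_{\Omega}\varphi\cdot A\varphi\,d\mupalm\,dx\,.
\]
The right-hand side is a concave quadratic functional of $\varphi$, and its supremum over $\varphi$ in a dense subset of $L^2(\bQ\times\Omega;\mupalm;\R^N)$ is attained (in the limit) at $\varphi=u$, giving exactly $\int_{\bQ}\int_\Omega u\cdot Au\,d\mupalm\,dx$. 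To make this rigorous I would use Lemma \ref{lem:Lem} — more precisely, since $u\in L^2(\bQ;L^2(\Omega;\mupalm))$ and the simple functions $\Phi_2$ are dense there, I pick $\varphi_k\to u$ strongly in $L^2(\bQ\times\Omega;\mupalm;\R^N)$; boundedness of $A$ gives $\int\int \varphi_k\cdot A\varphi_k\to\int\int u\cdot Au$ and $\int\int u\cdot A\varphi_k\to\int\int u\cdot Au$, so the lower bound converges to $\int_{\bQ}\int_\Omega u\cdot Au\,d\mupalm\,dx$, which is the claim.

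**Main obstacle.** The delicate point is the legitimacy of the limit passages in the two linear-in-$\ueta$ and quadratic test terms on a single full-measure set of $\omega$ that works simultaneously for \emph{all} the test functions $\varphi_k$ needed. The cross term $\int_{\bQ}\ueta\cdot A_{\eta,\omega}\varphi_{\eta,\omega}\,d\muomega^\eta$ requires $A\varphi_k$ (as a function of $(x,\omega)$) to be an admissible two-scale test integrand against $\ueta$; this is where Lemma \ref{lem:valid-ts-test-function} and the structure of $\Phi_p$ (countable, hence only countably many exceptional null sets) are essential, so I would set up the countable dense family first and take the intersection of all exceptional sets at the outset. A secondary technical nuisance is that $A_{\eta,\omega}\varphi$ need not itself lie in $\Phi_p$ even if $\varphi$ does, since $A$ is only $L^\infty$; one circumvents this by approximating $A\varphi_k$ in $L^2(\bQ\times\Omega;\mupalm)$ by elements of $\Phi_2$ and using the uniform bound $\sup_\eta\|\ueta\|_{L^2(\bQ;\muomega^\eta)}<\infty$ from the definition of weak two-scale convergence to control the error, and similarly for the purely quadratic term one approximates $\varphi_k\cdot A\varphi_k$ in $L^1$. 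Once these approximation steps are in place, everything else is the routine expansion above.
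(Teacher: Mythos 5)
The paper does not print a proof of Lemma \ref{lem:weak-convergence-equivalent-norms}; it is stated as one of a collection of auxiliary results. The closest thing to an ``official'' argument is the more general Lemma \ref{lem:General-Hom-Convex}, proved via Fenchel conjugation, of which the present quadratic statement is a special case. Your proof is correct and is mathematically the same idea, just parameterized differently: the algebraic identity
\[
0\le (\ueta-\varphi_{\eta,\omega})\cdot A_{\eta,\omega}(\ueta-\varphi_{\eta,\omega})
\quad\Longleftrightarrow\quad
\ueta\cdot A_{\eta,\omega}\ueta \ge \ueta\cdot \psi_{\eta,\omega} - \tfrac14\psi_{\eta,\omega}\cdot A_{\eta,\omega}^{-1}\psi_{\eta,\omega}
\]
under the substitution $\psi = 2A\varphi$ is exactly the Fenchel (support-line) inequality for $f(\xi)=\xi\cdot A\xi$. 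The one genuine difference is a matter of bookkeeping: the paper's route takes the \emph{dual} variable $\psi$ directly from the countable class $\Phi_p$, so the cross term $\ueta\cdot\psi_{\eta,\omega}$ is immediately admissible under Definition \ref{def:two-scale-conv} and no approximation is needed, whereas your primal parameterization by $\varphi\in\Phi_p$ produces the test integrand $A\varphi$, which is bounded but not simple and hence not in $\Phi_2$. You correctly spot this (and the analogous issue for the purely quadratic term $\varphi\cdot A\varphi$) and repair it by $L^2$-approximating $A\varphi_k$ by elements of $\Phi_2$, invoking Lemma \ref{lem:ergodicity-charac-functions} for the $L^\infty$ error terms and the uniform bound on $\|\ueta\|_{L^2(\bQ;\muomega^\eta)}$. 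Since all families involved ($\varphi_k$, the $\Phi_2$-approximants, the differences whose squares must be ergodic) are countable, the intersection of exceptional sets is still a full-measure set of $\omega$, so the argument closes. In short: your proof is sound and fills a gap the paper leaves implicit, while the Fenchel parameterization used for Lemma \ref{lem:General-Hom-Convex} would have let you skip the $A\varphi$-approximation step.
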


\subsection{Two-scale convergence: time dependent case}

We are also interested in the convergence behavior of functions $\ueta:\,[0,T]\to L^{p}(\bQ,\muomega^\eta)$.
\begin{Def}
\label{def:weak-A-conv-time}Let $1<r,r',p,q<\infty$ with $\frac{1}{p}+\frac{1}{q}=1$
and $\frac{1}{r'}+\frac{1}{r}=1$. Let $\Phi_{q}$ be the set of Remark
\ref{rem:es-count-dense-set} and let $\omega\in\Omega_{\Phi_{q}}$.
Let $\ueta\in L^{r}(0,T;L^{p}(\bQ;\muomega^{\eta}))$ for all $\eta>0$.
We say that $\ueta$ converges (weakly) in two scales to $u\in L^{r}(0,T;L^{p}(\bQ;L^{p}(\Omega,\mupalm)))$
and write $\ueta\stackrel{2s}{\weakto}u$ if for all continuous and
piecewise affine functions $\phi:\,[0,T]\to\Phi_{q}$ there holds
with $\phi_{\omega,\eta}(t,x):=\phi(t,x,\tau_{\frac{x}{\eta}}\omega)$
\[
\lim_{\eta\to0}\int_{0}^{T}\int_{\bQ}\ueta\phi_{\omega,\eta}d\muomega^{\eta}dt=\int_{0}^{T}\int_{\bQ}\int_{\Omega}u\phi\,d\mupalm\,dx\,dt
\]
\end{Def}
The following two lemmas where proved in \cite{heida2016a}.
\begin{Lem}
\label{lem:weak-fA-conv-time}Asssume that $1<p<\infty$ and $1<r\leq\infty$. Then,
every sequence of functions $\left(\ueta\in L^{r}(0,T;L^{p}(\bQ;\muomega^{\eta}))\right)_{\eps>0}$
satisfying $$\norm{\ueta}_{L^{r}(0,T;L^{p}(\bQ;\muomega^{\eta}))}\leq C$$
for some $C>0$ independent from $\eta$ has a weakly two-scale convergent
subsequence with limit function $u\in L^{r}(0,T;L^{p}(\bQ;L^{p}(\Omega,\mupalm)))$.
Furthermore, if $$\norm{\partial_{t}\ueta}_{L^{r}(0,T;L^{p}(\bQ;\muomega^{\eta}))}\leq C$$
uniformly for $1<p\leq\infty$, then also $\norm{\partial_{t}u}_{L^{r}(0,T;L^{p}(\bQ;L^{p}(\Omega,\mupalm)))}\leq C$
and $\partial_{t}\ueta\stackrel{2s}{\weakto}\partial_{t}u$ in the sense
of Definition \ref{def:weak-A-conv-time} as well as $\ueta(t)\stackrel{2s}{\weakto}u(t)$
for all $t\in[0,T]$.
\end{Lem}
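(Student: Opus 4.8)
Since the mixed Bochner norm $L^{r}(0,T;L^{p}(\bQ;\muomega^{\eta}))$ with possibly $r\neq p$ does not permit a direct appeal to the stationary compactness Lemma \ref{lem:two-scale-limit}, the plan is to obtain $u$ by a duality/diagonal argument built on the ergodic theorem, and afterwards to identify the two-scale limit of $\partial_{t}\ueta$ and the pointwise-in-time limits by integration by parts in $t$. Restrict $\omega$ to the full-measure set on which every element of $\Phi_{p}\cup\Phi_{q}$ admits an ergodic realization. On the space of continuous piecewise affine maps $\phi:[0,T]\to\Phi_{q}$ (affine interpolation of nodal values taken in $\Phi_{q}$), consider the functionals $\Lambda_{\eta}(\phi):=\int_{0}^{T}\int_{\bQ}\ueta\,\phi_{\omega,\eta}\,d\muomega^{\eta}\,dt$. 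H\"older in $x$ and in $t$ gives $|\Lambda_{\eta}(\phi)|\leq\norm{\ueta}_{L^{r}(0,T;L^{p}(\bQ;\muomega^{\eta}))}\norm{\phi_{\omega,\eta}}_{L^{r'}(0,T;L^{q}(\bQ;\muomega^{\eta}))}$, while the ergodic theorem (Theorem \ref{thm:ergodic-thm-palm}, applied to the finitely many characteristic functions building the nodal simple functions, plus dominated convergence in $t$ and the uniform bound $\muomega^{\eta}(\bQ)\leq C$) yields $\limsup_{\eta\to0}\norm{\phi_{\omega,\eta}}_{L^{r'}(0,T;L^{q}(\bQ;\muomega^{\eta}))}\leq C'\norm{\phi}_{L^{r'}(0,T;L^{q}(\bQ;L^{q}(\Omega;\mupalm)))}$. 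Since these maps are dense in $L^{r'}(0,T;L^{q}(\bQ;L^{q}(\Omega;\mupalm)))$, a diagonal extraction over a countable dense family of them (using $\sup_{\eta}|\Lambda_{\eta}(\phi)|<\infty$) yields a subsequence $\eta'\to0$ along which $\Lambda_{\eta'}$ converges to a bounded linear functional $\Lambda$. For $r<\infty$, reflexivity of $L^{r}(0,T;L^{p}(\bQ;L^{p}(\Omega;\mupalm)))$ furnishes a representative $u$ with $\norm{u}\leq\liminf_{\eta}\norm{\ueta}$; the convergence $\ueta\stackrel{2s}{\weakto}u$ then holds, and by the uniform estimate and a density argument it extends to testing against every $\Psi\in L^{r'}(0,T;L^{q}(\bQ;L^{q}(\Omega;\mupalm)))$ (the time-dependent analogue of $\Phi_{q}$ being a sufficient test family in the stationary case, cf. Lemma \ref{lem:valid-ts-test-function}). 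The case $r=\infty$ is identical, via weak-$*$ compactness with the separable predual $L^{1}(0,T;L^{q}(\bQ;L^{q}(\Omega;\mupalm)))$.

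\textbf{The time derivative.} Assume additionally $\norm{\partial_{t}\ueta}_{L^{r}(0,T;L^{p}(\bQ;\muomega^{\eta}))}\leq C$ for $1<p\leq\infty$. Applying the previous step to $\partial_{t}\ueta$ gives, after a further extraction, $\partial_{t}\ueta\stackrel{2s}{\weakto}w$ with $\norm{w}\leq C$. For $\psi\in C_{c}^{\infty}(0,T)$ and $\theta\in\Phi_{q}$, integration by parts in $t$ (legitimate since $\ueta\in W^{1,r}(0,T;L^{p}(\bQ;\muomega^{\eta}))$) gives $\int_{0}^{T}\int_{\bQ}\partial_{t}\ueta\,\psi\,\theta_{\omega,\eta}\,d\muomega^{\eta}dt=-\int_{0}^{T}\int_{\bQ}\ueta\,\psi'\,\theta_{\omega,\eta}\,d\muomega^{\eta}dt$; passing to the limit on both sides with the test functions $\psi\otimes\theta$ and $\psi'\otimes\theta$ identifies $\int_{0}^{T}\int_{\bQ}\int_{\Omega}w\,\psi\,\theta\,d\mupalm\,dx\,dt=-\int_{0}^{T}\int_{\bQ}\int_{\Omega}u\,\psi'\,\theta\,d\mupalm\,dx\,dt$, i.e. $w=\partial_{t}u$ in the Bochner sense. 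Hence $u\in W^{1,r}(0,T;L^{p}(\bQ;L^{p}(\Omega;\mupalm)))$ with the asserted bound and $\partial_{t}\ueta\stackrel{2s}{\weakto}\partial_{t}u$.

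\textbf{Pointwise in time.} The two bounds yield $\ueta$ bounded in $W^{1,r}(0,T;L^{p}(\bQ;\muomega^{\eta}))\hookrightarrow C([0,T];L^{p}(\bQ;\muomega^{\eta}))$, hence $\sup_{t\in[0,T]}\norm{\ueta(t)}_{L^{p}(\bQ;\muomega^{\eta})}\leq C$ uniformly in $\eta$. Fix $t_{0}\in[0,T]$. Any subsequence of $\ueta(t_{0})$ has a further subsequence with a two-scale limit $\xi$ (by Lemma \ref{lem:two-scale-limit} if $p<\infty$; if $p=\infty$, first embed $L^{\infty}(\bQ;\muomega^{\eta})\hookrightarrow L^{p'}(\bQ;\muomega^{\eta})$, $p'<\infty$, using $\muomega^{\eta}(\bQ)\leq C$). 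Taking $\psi\in C^{1}([0,T])$ with $\psi(0)=0$, $\psi(t_{0})=1$ and $\theta\in\Phi_{q}$, and integrating by parts on $[0,t_{0}]$ gives $\int_{\bQ}\ueta(t_{0})\theta_{\omega,\eta}\,d\muomega^{\eta}=\int_{0}^{t_{0}}\int_{\bQ}\bigl(\partial_{s}\ueta\,\psi+\ueta\,\psi'\bigr)\theta_{\omega,\eta}\,d\muomega^{\eta}\,ds$; letting $\eta\to0$ (the right-hand side by the two previous steps, the left-hand side since $\ueta(t_{0})\stackrel{2s}{\weakto}\xi$), and using the identical identity for $u$, which is valid because $u\in W^{1,r}(0,T;L^{p}(\bQ;L^{p}(\Omega;\mupalm)))$, yields $\int_{\bQ}\int_{\Omega}\xi\,\theta\,d\mupalm\,dx=\int_{\bQ}\int_{\Omega}u(t_{0})\,\theta\,d\mupalm\,dx$ for all $\theta\in\Phi_{q}$. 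By density $\xi=u(t_{0})$, and the subsequence principle gives $\ueta(t_{0})\stackrel{2s}{\weakto}u(t_{0})$ for every $t_{0}\in[0,T]$.

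\textbf{Main obstacle.} The essential difficulty is the first step: the mixed norm forces the duality/diagonal approach, and one must check carefully that continuous piecewise-affine $\Phi_{q}$-valued maps form a dense subset of the separable (and, for $r<\infty$, reflexive) space $L^{r'}(0,T;L^{q}(\bQ;L^{q}(\Omega;\mupalm)))$ and that their oscillating realizations $\phi_{\omega,\eta}$ have the correct asymptotic norm; keeping all the required ergodic limits valid on a single $\cP$-null set relies on the countability of $\Phi_{q}$, and the endpoint cases $r=\infty$ (first step) and $p=\infty$ (last two steps) must be treated separately.
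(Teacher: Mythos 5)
The paper does not actually prove this lemma --- it is dispatched with the single sentence ``The following two lemmas where proved in \cite{heida2016a}'' --- so there is no in-paper argument to compare against. Your proof is correct and is essentially the standard argument behind the cited result: duality against the (countable, dense) family of piecewise-affine-in-time $\Phi_q$-valued test functions whose oscillating realizations have convergent mixed norms by the ergodic theorem, identification of the limit functional via reflexivity (resp.\ weak-$*$ compactness for $r=\infty$) of $L^{r}(0,T;L^{p}(\bQ\times\Omega))$, identification of the two-scale limit of $\partial_t\ueta$ by integration by parts in $t$, and the pointwise-in-time statement via the fundamental theorem of calculus plus the subsequence principle. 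The only places where you lean on unverified but genuinely routine facts are the ones you yourself flag (density of the affine interpolants in $L^{r'}(0,T;L^{q}(\bQ\times\Omega))$, and the extension of admissible test functions from nodal interpolants to products $\psi\otimes\theta$ and to cut-offs $\chi_{[0,t_0]}\psi\otimes\theta$ using the uniform bound $\sup_\eta\|\theta_{\omega,\eta}\|_{L^q(\bQ;\muomega^\eta)}<\infty$); these close without difficulty, so I see no gap.
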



\section{Homogenization of convex functionals}\label{sec:hom-convex}
\begin{Lem}
\label{lem:General-Hom-Convex}Let Assumption \ref{assu:Omega-mu-tau}
hold for $(\Omega,\cF_{\Omega},\cP,\tau)$ and the random measure
$\muomega$. Let $f:\,\bQ\times\Omega\times\R^{N}\to\R$ be a convex
functional in $\R^N$. For almost all $\omega\in\Omega_{\Phi_{p}}$
the following holds: Let $\ueta\in L^{q}(\bQ;\R^N)$ be a sequence such that
$\norm{\ueta}_{L^{q}(\bQ)}\leq C$ for some $0<C<\infty$ and such that
$\ueta\stackrel{{\scriptstyle 2s}}{\weakto}u\in L^{q}(\bQ\times\Omega;\lebesgueL\otimes\mupalm;\R^N)$.
Then, it holds 
\[
\int_{\bQ}\int_{\Omega}f(x,\tilde{\omega},u(x,\tilde{\omega}))\,d\mupalm(\tilde{\omega})\,dx\leq\liminf_{\eta\to0}\int_{\bQ}f(x,\tau_{\frac{x}{\eta}}\omega,\ueta(x))\,d\muomega^\eta(x)\,.
\]
\end{Lem}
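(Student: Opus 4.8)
The plan is to use the convexity of $f$ only through its affine (subgradient) minorants, and — crucially — to linearize not at the two-scale limit $u$ itself (which depends on the fixed realization $\omega$) but at approximants drawn from a countable dense set fixed in advance; the resulting three terms are then passed to the limit $\eta\to0$ by the ergodic theorem and by the definition of two-scale convergence, and a final limit recovers $f(\cdot,\cdot,u)$.

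First I would carry out standard reductions and fix the relevant objects. Since $f(x,\tilde\omega,\cdot)$ is finite and convex it is continuous; replacing $f$ by $\max(f,-M)$ and letting $M\to\infty$ by monotone convergence I may assume $f\ge-M$, and I shall use that $f(\cdot,\cdot,w)$ is integrable for each $w\in\R^N$ and that $f(\cdot,\cdot,u)\in L^1(\bQ\times\Omega;\lebesgueL\otimes\mupalm)$ (implicit in the statement, or guaranteed by the growth conditions in the applications). Let $\Phi_q,\Phi_p$ be the countable dense sets of simple functions of Remark \ref{rem:es-count-dense-set}. For every value $c$ occurring in some $\phi\in\Phi_q$ I fix a measurable selection $\Xi_c(x,\tilde\omega)\in\partial_v f(x,\tilde\omega,c)$ ($f$ being a normal convex integrand), and for $\phi\in\Phi_q$ I set $\Xi_\phi:=\sum_l\chi_{A_l}\Xi_{c_l}$ with $\{A_l\},\{c_l\}$ the level sets and values of $\phi$, so that $\Xi_\phi(x,\tilde\omega)\in\partial_v f(x,\tilde\omega,\phi(x,\tilde\omega))$. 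Truncating each of $\Xi_\phi$, $f(\cdot,\cdot,\phi)$, $\Xi_\phi\cdot\phi$ at a level $R\in\N$ and forming the functions $|\Xi_\phi^R-\psi|^p$, $\psi\in\Phi_p$, I obtain a \emph{countable} family of essentially bounded functions on $\bQ\times\Omega$ depending only on $f,\Phi_q,\Phi_p$. By Lemma \ref{lem:ergodicity-charac-functions} each of them is ergodic, so there is a full-measure $\Omega'\subseteq\Omega_{\Phi_p}$ on which all admit ergodic realizations; I fix $\omega\in\Omega'$ and a sequence $\ueta$ as in the statement (so $\sup_\eta\norm{\ueta}_{L^q(\bQ;\muomega^\eta)}<\infty$ and $\ueta\tsweak u$, by Definition \ref{def:two-scale-conv}).

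Next I would choose $\phi_n\in\Phi_q$ with $\phi_n\to u$ in $L^q(\bQ\times\Omega;\lebesgueL\otimes\mupalm)$ and, along a subsequence, $\lebesgueL\otimes\mupalm$-a.e. The subgradient inequality, with $\tilde\omega=\tau_{\frac{x}{\eta}}\omega$, gives
\[
f(x,\tilde\omega,\ueta(x))\ge f(x,\tilde\omega,\phi_n(x,\tilde\omega))+\Xi_{\phi_n}(x,\tilde\omega)\cdot\ueta(x)-\Xi_{\phi_n}(x,\tilde\omega)\cdot\phi_n(x,\tilde\omega).
\]
Integrating over $d\muomega^\eta(x)$ and letting $\eta\to0$: the first and third terms on the right are of the form $\int_\bQ a(x,\tau_{\frac{x}{\eta}}\omega)\,d\muomega^\eta$ with $a$ in the family above (after truncation), hence converge to $\int_\bQ\int_\Omega a\,d\mupalm\,dx$; for the middle term the truncated (hence bounded) $\Xi_{\phi_n}$ is an admissible two-scale test function — approximating it by $\psi\in\Phi_p$ in $L^p(\bQ\times\Omega)$, the $\psi$-contribution converges by Definition \ref{def:two-scale-conv} while the error is estimated by $\sup_\eta\norm{\ueta}_{L^q(\bQ;\muomega^\eta)}\norm{\Xi_{\phi_n}-\psi}_{L^p(\bQ;\muomega^\eta)}$, whose second factor tends to $\norm{\Xi_{\phi_n}-\psi}_{L^p(\bQ\times\Omega)}$ by ergodicity of $|\Xi_{\phi_n}-\psi|^p$. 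Thus $\int_\bQ\Xi_{\phi_n}(x,\tau_{\frac{x}{\eta}}\omega)\cdot\ueta\,d\muomega^\eta\to\int_\bQ\int_\Omega\Xi_{\phi_n}\cdot u$, and after letting $R\to\infty$ (monotone/dominated convergence in the ergodic terms) I arrive at
\[
\liminf_{\eta\to0}\int_\bQ f(x,\tau_{\frac{x}{\eta}}\omega,\ueta)\,d\muomega^\eta\ge\int_\bQ\int_\Omega\bigl(f(x,\tilde\omega,\phi_n)+\Xi_{\phi_n}\cdot(u-\phi_n)\bigr)\,d\mupalm\,dx.
\]
Finally I let $n\to\infty$: since $\phi_n\to u$ a.e., $f(x,\tilde\omega,\cdot)$ is continuous, and $\partial_v f(x,\tilde\omega,w)$ is uniformly bounded for $w$ in a fixed ball around $u(x,\tilde\omega)$ (local Lipschitz bound for finite convex functions), the integrand converges pointwise a.e. to $f(x,\tilde\omega,u)$ and is $\le f(x,\tilde\omega,u)\in L^1$; dominated convergence (the lower bound controlled via $f\ge-M$ and the growth hypotheses) yields the assertion, and undoing the $f\ge-M$ reduction by monotone convergence completes the proof.

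The hard part is the structural issue that $u$, hence any subgradient of $f$ ``evaluated at $u$'', depends on the fixed realization $\omega$, so it cannot serve as a two-scale test function against a set $\Omega'$ that must be chosen before $\ueta$ is known; this forces the linearization at the a-priori-fixed approximants $\phi_n\in\Phi_q$, and consequently the passage $n\to\infty$ — recovering $f(\cdot,\cdot,u)$ from these affine minorants — is the delicate step, resting on local boundedness of subgradients of finite convex functions and on the integrability hypotheses. The measurable selection of the $\Xi_c$ and the various truncations are routine, but must be organised so that only countably many bounded functions enter, keeping $\Omega'$ of full measure.
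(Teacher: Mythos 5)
Your argument is the dual form of the paper's. The paper linearizes $f$ through its Fenchel conjugate $f^*$ tested against the dense countable family $\Phi_p^N$ of dual variables, integrates the Fenchel inequality $f_{\eta,\omega}(x,\ueta)\geq \ueta\cdot\psi_\omega^\eta-f_{\eta,\omega}^*(x,\psi_\omega^\eta)$, and passes to the limit by two-scale convergence and ergodicity of $f^*(\cdot,\cdot,\psi)$, taking a sup over $\psi$ at the end; the coercivity $f\geq|\xi|^q$ in the first step makes $f^*$ continuous, and the general case is recovered by the $\delta$-perturbation. You instead linearize through measurable subgradient selections $\Xi_\phi$ at a dense countable set of primal approximants $\phi_n\in\Phi_q$ and pass to the limit $\phi_n\to u$, using local Lipschitz continuity of $f$ and the $f\geq-M$ reduction. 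With the identification $\psi=\Xi_\phi$ and $f^*(\cdot,\cdot,\Xi_\phi)=\Xi_\phi\cdot\phi-f(\cdot,\cdot,\phi)$, the two approaches are dual views of the same affine-minorant/weak-lower-semicontinuity argument. What yours costs in addition is the measurable-selection machinery for $\partial_v f$ and the dominated-convergence step in $n$; what it buys is avoiding the explicit coercivity normalization and the sup-over-$\psi$/Fenchel--Moreau step. Both are legitimate.

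There is, however, a genuine gap in your truncation step. You replace $\Xi_{\phi_n}$ in the middle term of the subgradient inequality by a bounded truncation $\Xi_{\phi_n}^R$, treat the truncated versions of all three right-hand terms as ergodic test objects, and undo the truncation by letting $R\to\infty$ at the very end. But $\Xi_{\phi_n}^R$ is no longer a subgradient of $f(\cdot,\cdot,\phi_n)$, and $\Xi_{\phi_n}^R\cdot\ueta$ can be larger or smaller than $\Xi_{\phi_n}\cdot\ueta$ depending on the sign of $\ueta$, so the pointwise inequality
\[
f(x,\tilde\omega,\ueta)\geq f(x,\tilde\omega,\phi_n)+\Xi_{\phi_n}\cdot(\ueta-\phi_n)
\]
simply does not survive the replacement $\Xi_{\phi_n}\rightsquigarrow\Xi_{\phi_n}^R$. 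The correct bookkeeping is to keep the untruncated inequality but restrict it to the set $S_{n,R}:=\{(x,\tilde\omega)\,:\,|\Xi_{\phi_n}(x,\tilde\omega)|\leq R,\ |f(x,\tilde\omega,\phi_n(x,\tilde\omega))|\leq R\}$, where every term is genuinely bounded, and on the complement use the reduction $f\geq -M$ to write
\[
\int_{\bQ}f_{\eta,\omega}(x,\ueta)\,d\muomega^\eta\ \geq\ \int_{\bQ}\chi_{S_{n,R}}\left[f(\cdot,\cdot,\phi_n)+\Xi_{\phi_n}\cdot(\ueta-\phi_n)\right]d\muomega^\eta\ -\ M\,\muomega^\eta\!\left(\{ (x,\tau_{x/\eta}\omega)\notin S_{n,R}\}\right)\,.
\]
Now $\chi_{S_{n,R}}\Xi_{\phi_n}$, $\chi_{S_{n,R}}f(\cdot,\cdot,\phi_n)$, $\chi_{S_{n,R}}\Xi_{\phi_n}\cdot\phi_n$ and $\chi_{S_{n,R}}$ itself are bounded and must all be added, over $n$ and $R$, to the countable family handled by Lemma~\ref{lem:ergodicity-charac-functions} before $\omega$ is fixed. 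After $\eta\to0$, the last term tends (by ergodicity of $\chi_{S_{n,R}}$) to $-M|\bQ|\,(\lebesgueL\otimes\mupalm)(\bQ\times\Omega\setminus S_{n,R})/|\bQ|$, which vanishes as $R\to\infty$; only then do you let $R\to\infty$ and, finally, $n\to\infty$ and $M\to\infty$. With this reorganization the proof goes through; as written, the intermediate step is not a valid lower bound. For a fair comparison, the paper's proof is also terse at the analogous point (the claim \eqref{eq:Lem-GHC-help-1} silently assumes ergodicity of $f^*(\cdot,\cdot,\psi)$, which is not automatic unless $f^*(\cdot,\cdot,\psi)\in L^\infty$), so a fully detailed write-up would need the same kind of care on either route.
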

The proof of Lemma \ref{lem:General-Hom-Convex} is literally the
same as for Theorem 7.1 in \cite{zhikov2004two}. However, we provide
it here for completeness.
\begin{proof}
Let $\omega\in\Omega_{\Phi_{p}}$ and let $f^{\ast}$ denote the Fenchel
conjugate of $f$ in the third variable. Without loss of generality,
we may assume that 
\begin{equation}
\lim_{\eta\to 0}\int_{\bQ}f_{\eta,\omega}^{\ast}(x,\psi_{\omega}^{\eta}(x))\,d\muomega^\eta(x)=\int_{\bQ}\int_{\Omega}f^{\ast}(x,\tilde{\omega},\psi(x,\tilde{\omega}))\,d\mupalm(\tilde{\omega})\,dx\label{eq:Lem-GHC-help-1}
\end{equation}
for all $\psi\in\Phi_{p}^N$ and all $\omega\in\Omega_{\Phi_{p}}$.
We first consider the case 
\begin{equation}
f(x,\tilde{\omega},\xi)\geq\left|\xi\right|^{q}\label{eq:Lem-GHC-help-2}
\end{equation}
for almost every $(x,\tilde{\omega})\in\bQ\times\Omega$ and all $\xi\in\R^{N}$.
We then find for every $\psi\in\Phi_{p}^N$ 
\[
F_{\eta}:=\int_{\bQ}f_{\eta,\omega}(x,\ueta(x))\,d\muomega^\eta(x)\geq\int_{\bQ}\ueta(x)\cdot\psi_{\omega}^{\eta}(x)d\muomega^\eta(x)-\int_{\bQ}f_{\eta,\omega}^{\ast}(x,\psi_{\omega}^{\eta}(x))\,d\muomega^\eta(x)\,.
\]
Due to $\ueta\tsweak u$ and (\ref{eq:Lem-GHC-help-1}) we find 
\[
\liminf_{\eta\to0}F_{\eta}\geq\int_{\bQ}\int_{\Omega}\left(u(x,\tilde{\omega})\cdot\psi(x,\tilde{\omega})-f^{\ast}(x,\tilde{\omega},\psi(x,\tilde{\omega}))\right)\,d\mupalm(\tilde{\omega})\,dx
\]
for all $\psi\in\Phi_{p}^N$. Since (\ref{eq:Lem-GHC-help-2}) holds,
$f^{\ast}$ is continuous in $\xi$ and the last inequality implies
\begin{equation}
\liminf_{\eta\to0}F_{\eta}\geq\int_{\bQ}\int_{\Omega}f(x,\tilde{\omega},u(x,\tilde{\omega}))\,d\mupalm(\tilde{\omega})\,dx\,.\label{eq:lem:General-Hom-Convex-prelim}
\end{equation}
In the general case, let 
\[
F_{\eta}^{\delta}:=\int_{\bQ}f_{\eta,\omega}(x,\ueta(x))\,d\muomega^\eta(x)+\delta\norm{\ueta}_{L^{q}(\bQ)}.
\]
Then, $0<F_{\eta}^{\delta}-F_{\eta}\leq C\delta$ and (\ref{eq:lem:General-Hom-Convex-prelim})
implies that 
\[
\liminf_{\eta\to0}F_{\eta}^{\delta}\geq\int_{\bQ}\int_{\Omega}f(x,\tilde{\omega},u(x,\tilde{\omega}))\,d\mupalm(\tilde{\omega})\,dx+\delta\norm u_{L^{q}(\bQ\times\Omega)}\,.
\]
Hence the claim follows.\end{proof}
\begin{Lem}
Let Assumption \ref{assu:Omega-mu-tau} hold for $(\Omega,\cF_{\Omega},\cP,\tau)$
and let $\muomega$ be a random measure. Let $f:\,\bQ\times\Omega\times\R^{N}\to\R$
be such that for a.e. $(x,\tilde{\omega})$ the function $f(x,\tilde{\omega},\cdot)$
is convex in $\R^{N}$. Then, for almost every $\omega\in\Omega_{\Phi_{p}}$
the following holds: If $\ueta_{\omega}\in L^{q}(\bQ;\R^N)$ is a sequence
of minimizers of the functionals $$F_{\eta,\omega}:\,u\mapsto\int_{\bQ}f(x,\tau_{\frac{x}{\eta}}\omega,u(x))\,d\muomega^\eta(x)$$
and if $\sup_{\eta>0}\norm{u_{\omega}^{\eta}}_{L^{q}(\bQ;\muomega^\eta)}<\infty$,
then there exists $u_{0,\omega}\in L^{q}(\bQ\times\Omega;\lebesgueL\otimes\mupalm;\R^N)$
such that $\ueta_{\omega}\tsweak u_{0,\omega}$ along a subsequence
and $u_{0,\omega}$ is a minimizer of 
\begin{align*}
F_{0}\,:\,\,\,L^q(\bQ\times\Omega;\lebesgueL\otimes\mupalm;\R^N) & \to\R\\
u & \mapsto\int_{\bQ}f(x,\tilde{\omega},u(x,\tilde{\omega}))\,d\mupalm(\tilde{\omega})\,dx\,.
\end{align*}
\end{Lem}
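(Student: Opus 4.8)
The plan is to combine the lower semicontinuity estimate of Lemma~\ref{lem:General-Hom-Convex} with a matching upper bound obtained by a recovery-sequence argument, so that the two-scale limit $u_{0,\omega}$ of the minimizers can be compared with an arbitrary competitor $w\in L^q(\bQ\times\Omega;\lebesgueL\otimes\mupalm;\R^N)$. First I would extract, using Lemma~\ref{lem:two-scale-limit} and the hypothesis $\sup_{\eta>0}\norm{u_\omega^\eta}_{L^q(\bQ;\muomega^\eta)}<\infty$, a subsequence (not relabeled) with $u_\omega^\eta\tsweak u_{0,\omega}$ for some $u_{0,\omega}\in L^q(\bQ\times\Omega;\lebesgueL\otimes\mupalm;\R^N)$. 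Applying Lemma~\ref{lem:General-Hom-Convex} to this subsequence gives immediately
\[
F_0(u_{0,\omega})\leq\liminf_{\eta\to0}\int_{\bQ}f\!\left(x,\tau_{\frac{x}{\eta}}\omega,u_\omega^\eta(x)\right)d\muomega^\eta(x)=\liminf_{\eta\to0}F_{\eta,\omega}(u_\omega^\eta)\,.
\]

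Next I would produce the upper bound. Fix any competitor $w\in L^q(\bQ\times\Omega;\lebesgueL\otimes\mupalm;\R^N)$. By Lemma~\ref{lem:Every-v-is-a-fB-limit}, for almost every $\omega$ there is a sequence $w^\eta\in L^q(\bQ;\muomega^\eta)$ with $w^\eta\tsweak w$. Since each $u_\omega^\eta$ minimizes $F_{\eta,\omega}$, we have $F_{\eta,\omega}(u_\omega^\eta)\leq F_{\eta,\omega}(w^\eta)$ for every $\eta$. The crux is therefore to arrange that $\limsup_{\eta\to0}F_{\eta,\omega}(w^\eta)\leq F_0(w)$; combined with the previous display and the minimality inequality this yields $F_0(u_{0,\omega})\leq F_0(w)$ for all $w$, i.e.\ $u_{0,\omega}$ is a minimizer of $F_0$. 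To get the recovery estimate I would first treat $w$ in the countable dense set $\Phi_q^N$: for $w=\psi\in\Phi_q^N$ the natural choice is $w^\eta(x)=\psi(x,\tau_{\frac{x}{\eta}}\omega)$, and then $\int_{\bQ}f(x,\tau_{\frac{x}{\eta}}\omega,\psi(x,\tau_{\frac{x}{\eta}}\omega))\,d\muomega^\eta(x)$ is itself an ergodic average of the function $(x,\tilde\omega)\mapsto f(x,\tilde\omega,\psi(x,\tilde\omega))$, which (after enlarging the full-measure set of good $\omega$ using Lemma~\ref{lem:ergodicity-charac-functions} / the ergodic theorem, possibly also passing through a truncation $f\wedge k$ to stay in $L^\infty$) converges exactly to $\int_{\bQ}\int_\Omega f(x,\tilde\omega,\psi(x,\tilde\omega))\,d\mupalm\,dx=F_0(\psi)$. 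For general $w$ I would approximate $w$ in $L^q(\bQ\times\Omega;\lebesgueL\otimes\mupalm;\R^N)$ by a sequence $\psi_k\in\Phi_q^N$, use the continuity of $F_0$ along this approximation together with a diagonal argument in $(k,\eta)$, and invoke the stability of weak two-scale convergence under this approximation (Lemma~\ref{lem:valid-ts-test-function} is the relevant device for passing limits against such test fields).

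The main obstacle I anticipate is precisely the recovery step under minimal assumptions on $f$: the statement only assumes $f(x,\tilde\omega,\cdot)$ convex, with no growth or coercivity control and no integrability of $f(x,\tilde\omega,\psi(x,\tilde\omega))$ a priori, so the ergodic averaging of the plugged-in integrand is not automatic. I would handle this by the standard monotone-truncation device: replace $f$ by $f_k:=\min(f,k)$ (or $\max(-k,\min(f,k))$), which is bounded and for which Lemma~\ref{lem:ergodicity-charac-functions} and the ergodic theorem apply directly, obtain the recovery inequality for $f_k$, and then let $k\to\infty$ using monotone convergence on the limit side; the liminf side only gets smaller under truncation, so the inequality survives, and the dominated/monotone passage recovers $F_0(w)$. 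A secondary technical point is the measurability and good-$\omega$ bookkeeping: all the convergences above hold only for $\omega$ in a countable intersection of full-measure sets indexed by the countable families $\Phi_q$, $\Phi_q^N$ and by $k\in\N$; since a countable intersection of full-measure sets is full-measure, this is harmless, but it must be stated. Finally I would note that the extracted subsequence of minimizers and its two-scale limit depend on $\omega$, which is already reflected in the notation $u_{0,\omega}$, so no further care is needed there.
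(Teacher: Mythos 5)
Your proposal is a genuine $\Gamma$-convergence argument: liminf inequality via Lemma~\ref{lem:General-Hom-Convex} plus a $\limsup$-recovery inequality for \emph{every} competitor $w\in L^q(\bQ\times\Omega;\lebesgueL\otimes\mupalm;\R^N)$. The paper takes a different and considerably shorter route: it uses the measurable selection theorem (Castaing--Valadier, Thm.~III-39) to pick a minimizer $u_0$ of $F_0$ that is a \emph{pointwise} minimizer of $f(x,\tilde\omega,\cdot)$ for a.e.\ $(x,\tilde\omega)$. The single realization $u_0^\eta(x):=u_0(x,\tau_{x/\eta}\omega)$ then serves as the only competitor ever needed: since $u_\omega^\eta$ minimizes $F_{\eta,\omega}$ one has $F_{\eta,\omega}(u_\omega^\eta)\leq F_{\eta,\omega}(u_0^\eta)$, and $F_{\eta,\omega}(u_0^\eta)\to F_0(u_0)$ because $(x,\tilde\omega)\mapsto f(x,\tilde\omega,u_0(x,\tilde\omega))=\min_\xi f(x,\tilde\omega,\xi)$ is a fixed function whose ergodic average converges. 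Combined with the liminf inequality this gives $F_0(u_{0,\omega})\leq F_0(u_0)$ at once. The advantage is precisely that you never need a recovery sequence for arbitrary $w$, only for the optimally chosen $u_0$; this completely sidesteps the integrability/growth difficulty that you correctly flag as the crux of your approach.

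There is also a genuine gap in the device you propose to fix that difficulty. You set $f_k:=\min(f,k)$, deduce the ergodic convergence for the bounded $f_k$, and claim that letting $k\to\infty$ ``recovers $F_0(w)$'' while ``the liminf side only gets smaller under truncation, so the inequality survives''. This goes in the wrong direction. Since $f_k\leq f$, the truncated ergodic limit gives
\[
\limsup_{\eta\to0}F_{\eta,\omega}(w^\eta)\;\geq\;\lim_{\eta\to0}\int_{\bQ}f_k\bigl(x,\tau_{\tfrac{x}{\eta}}\omega,w^\eta(x)\bigr)\,d\muomega^\eta(x)\;=\;\int_{\bQ}\int_\Omega f_k(x,\tilde\omega,w)\,d\mupalm\,dx\,,
\]
and sending $k\to\infty$ produces a \emph{lower} bound $\limsup_\eta F_{\eta,\omega}(w^\eta)\geq F_0(w)$, which is the opposite of what the recovery step requires. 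Truncating from below ($f^k=\max(f,-k)$) gives $f^k\geq f$ and hence an upper bound, but then $f^k$ is no longer bounded above, so Lemma~\ref{lem:ergodicity-charac-functions} / the $L^\infty$ ergodic argument no longer applies, and you are back to the original lack of integrability control. Without an additional growth hypothesis on $f$ this obstruction is real; the paper's measurable-selection shortcut is exactly how it is avoided, so it is worth adopting that route rather than trying to repair the truncation.
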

\begin{proof}
Let $u_{0}\in L^{q}(\bQ\times\Omega;\lebesgueL\otimes\mupalm)$ be
a minimizer of $F_{0}$. By \cite[Theorem III-39]{castaingvaladier1977convex}
we can assume that $u_{0}(x,\tilde{\omega})$ minimizes $f(x,\tilde{\omega})$
for almost every $(x,\tilde{\omega})$. Then, for almost all $\omega\in\Omega$
it holds $\ueta_{0}(x):=u_{0}(x,\tau_{\frac{x}{\eta}}\omega)\in L^{q}(\bQ;\R^N)$
and 
\[
F_{\eta,\omega}(\ueta_{0})\geq F_{\eta,\omega}(\ueta)\,.
\]
We chose a subsequence $u_{\omega}^{\eta'}$ and $u_{0,\omega}\in L^{q}(\bQ\times\Omega;\lebesgueL\otimes\mupalm;\R^N)$
such that $u_{\omega}^{\eta'}\tsweak u_{0,\omega}$. Since 
$F_{\eta,\omega}(\ueta_{0})\to F_{0}(u_{0})$, we find 
\[
\int_{\bQ}\int_{\Omega}f(x,\tilde{\omega},u_{0,\omega}(x,\tilde{\omega}))\,d\mupalm(\tilde{\omega})\,dx\leq\liminf_{\eta\to0}F_{\eta,\omega}(\ueta_{\omega})\leq F_{0}(u_{0}).
\]
Hence, $u_{0,\omega}$ is a minimizer of $F_{0}$. \end{proof}
\begin{Theor}
Let Assumption \ref{assu:Omega-mu-tau} hold for $(\Omega,\cF_{\Omega},\cP,\tau)$
and let $\muomega$ be a random measure and let $1<p,q<\infty$ with
$\frac{1}{p}+\frac{1}{q}=1$. Let $\bQ\subset\Rn$ be a bounded domain and $f:\,\bQ\times\Omega\times\Rd\to\R$
be measurable and for all $(x,\omega)$. Let $f(x,\omega,\cdot)$ be
continuous and convex in $\Rd$ with $f(x,\omega,\xi)\geq|\xi|^{q}$.
Then, for almost every $\omega\in\Omega_{\Phi_{p}}$ it holds: If
$\ueta\in W^{1,q}(\bQ)$ is a sequence of minimizers 
of the functional 
\begin{align*}
F_{\eta}:\,\,L^{q}(\bQ) & \to\R\\
u & \mapsto\int_{\bQ}f_{\omega,\eta}(x,\nabla u(x))\,dx
\end{align*}
such that $\sup_{\eta>0}\norm{u_{\omega}^{\eta}}_{L^{q}(\bQ;\muomega^\eta)}<\infty$,
then there exist $u_{\omega}\in W_{0}^{1,q}(\bQ)$ and 
$\v_{\omega}\in L^{q}(\bQ; L_{\rm pot}^{q}(\Omega))$
and a subsequence $u^{\eta'}$ such that $u^{\eta'}\to u_{\omega}$
strongly in $L^{q}(\bQ)$ and $\nabla u^{\eta'}\tsweak\nabla u_{\omega}+\v_{\omega}$
as $\eta\to0$ and $(u_{\omega},\v_{\omega})$ is a minimizer of the
functional 
\begin{align*}
F_0:\,\,W_{0}^{1,q}(\bQ)\times L^{q}(\bQ; L_{\rm pot}^{q}(\Omega)) & \to\R\\
(u,\v) & \mapsto\int_{\bQ}\int_{\Omega}f(x,\omega,\nabla u(x)+\v(x,\tilde{\omega}))\,d\cP(\tilde{\omega})\,dx\,.
\end{align*}
\end{Theor}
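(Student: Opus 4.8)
The plan is to proceed by the standard two-step scheme for $\Gamma$-convergence of convex functionals: first establish a liminf inequality (lower bound) showing that any two-scale limit of the minimizers $u^\eta$ cannot do better than $F_0$, and then establish a recovery/upper bound showing $F_0$ is actually attained by the limit pair. First I would extract, using the a priori bound $\sup_\eta \norm{u^\eta_\omega}_{L^q(\bQ;\muomega^\eta)}<\infty$ together with the coercivity $f(x,\omega,\xi)\geq|\xi|^q$ and the minimality of $u^\eta$ (compared against, say, the zero function, or a fixed competitor), a uniform bound on $\norm{\nabla u^\eta}_{L^q(\bQ)}$; here one has to be a little careful since the functional only involves $\nabla u$, so one works in $W^{1,q}_0(\bQ)$ and invokes the Poincar\'e inequality to also bound $\norm{u^\eta}_{W^{1,q}(\bQ)}$. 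Then by Rellich–Kondrachov there is a subsequence with $u^{\eta'}\to u_\omega$ strongly in $L^q(\bQ)$ and $u^{\eta'}\weakto u_\omega$ in $W^{1,q}_0(\bQ)$, and by Lemma~\ref{lem:sto-conver-grad} (applied for the present exponent $q$) we get, along a further subsequence, $\nabla u^{\eta'}\tsweak \nabla u_\omega + \v_\omega$ for some $\v_\omega\in L^q(\bQ;L^q_{\rm pot}(\Omega))$.

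Next I would prove the lower bound: applying Lemma~\ref{lem:General-Hom-Convex} with the (convex in $\RD$) integrand $f$ and the weakly two-scale converging sequence $\nabla u^{\eta'}$ whose limit is $\nabla u_\omega + \v_\omega$, one obtains
\[
\int_{\bQ}\int_{\Omega} f\big(x,\tilde\omega,\nabla u_\omega(x)+\v_\omega(x,\tilde\omega)\big)\,d\mupalm(\tilde\omega)\,dx
\;\leq\;\liminf_{\eta\to0}\int_{\bQ} f_{\omega,\eta}\big(x,\nabla u^{\eta'}(x)\big)\,dx\,,
\]
i.e.\ $F_0(u_\omega,\v_\omega)\leq \liminf_{\eta'\to0} F_\eta(u^{\eta'})$. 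Here I should note the minor technical point that Lemma~\ref{lem:General-Hom-Convex} is stated with $d\muomega^\eta$ on the right and $d\mupalm$ inside; in the regime of interest $\muomega=\lebesgueL$ and $\mupalm=\cP$ (Remark~\ref{rem:palm-lebesgue}), so this matches the statement of the theorem with $d\cP$; alternatively one keeps the general measure throughout and only at the end specializes.

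Then comes the upper bound, which I expect to be the main obstacle. Given an arbitrary competitor $(u,\v)\in W^{1,q}_0(\bQ)\times L^q(\bQ;L^q_{\rm pot}(\Omega))$ I must construct a recovery sequence $w^\eta\in W^{1,q}_0(\bQ)$ with $\nabla w^\eta \tsweak \nabla u + \v$ and $\limsup_\eta F_\eta(w^\eta)\leq F_0(u,\v)$. The natural candidate is $w^\eta(x)=u(x)+V^\omega_\eta(x)$ where, by Lemma~\ref{lem:vanishing-ergodic-potential} (applied to $\v$, possibly after a density argument reducing to $\v$ of a suitable ``smooth in $x$, ergodic'' form), $V^\omega_\eta$ satisfies $\nabla V^\omega_\eta(x)=\v(\tau_{x/\eta}\omega)$ and $\|V^\omega_\eta\|_{L^q(\bQ)}\to0$; one may need a cut-off near $\partial\bQ$ to keep $w^\eta\in W^{1,q}_0$, contributing a boundary-layer error that is controlled because $\|V^\omega_\eta\|_{L^q}\to 0$ and $|\nabla V^\omega_\eta|$ is bounded in $L^q$. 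The convergence $\limsup F_\eta(w^\eta)\le F_0(u,\v)$ then follows from the ergodic theorem (Theorem~\ref{thm:ergodic-thm-palm}) applied to the ergodic function $x\mapsto f(x,\tau_{x/\eta}\omega, \nabla u(x)+\v(\tau_{x/\eta}\omega))$ — using Lemma~\ref{lem:ergodicity-charac-functions} / Definition~\ref{def:ergodicity-general-functions} to justify ergodicity — after first reducing by density to $u$ smooth and $\v$ simple, and using continuity of $f$ in $\xi$ plus the coercivity/growth to pass to the limit in the general case. The delicate points are (i) making the density/approximation argument precise so that the class of ``nice'' $(u,\v)$ for which the recovery sequence is explicit is dense in energy, and (ii) handling the boundary layer so that $w^\eta$ stays in $W^{1,q}_0(\bQ)$ without spoiling the energy estimate. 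Combining the lower bound with the existence of recovery sequences for the minimizing pair of $F_0$ (which exists by the direct method, since $F_0$ is convex, coercive by $f\geq|\xi|^q$ and weakly lower semicontinuous) yields $F_0(u_\omega,\v_\omega)\le \liminf F_\eta(u^{\eta'}) \le \limsup F_\eta(u^{\eta'}) \le \inf F_0$, so $(u_\omega,\v_\omega)$ is a minimizer of $F_0$, which completes the proof.
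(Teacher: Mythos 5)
Your proposal follows essentially the same route as the paper: compactness via Lemma~\ref{lem:sto-conver-grad} to obtain the two-scale limit $(u_\omega,\v_\omega)$, the lower bound via Lemma~\ref{lem:General-Hom-Convex}, and a recovery sequence for a minimizer $(u_0,\v_0)$ of $F_0$ built from Lemma~\ref{lem:vanishing-ergodic-potential} together with an approximation of $\v_0$ by simple potential-valued functions (the paper uses compactly supported simple functions plus mollification, which plays the role of your density-plus-cutoff step), followed by the sandwich $F_0(u_0,\v_0)\geq F_0(u_\omega,\v_\omega)\geq F_0(u_0,\v_0)$. A minor point in your favour: you explicitly note that the gradient bound $\|\nabla u^\eta\|_{L^q}\leq C$ must first be extracted from minimality plus the coercivity $f\geq|\xi|^q$, a step the paper's proof passes over silently.
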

\begin{proof}
Let  $\Phi_{\rm pot}$ be a countable dense subset of $L^q_{\rm pot}(\Omega)$ 
and let $\tilde\Omega\subset\Omega_{\Phi_q}$ be a set of full measure such that 
Lemma {lem:valid-ts-test-function} holds for all $\omega\in\tilde\Omega$. 
By ${\rm span} \Phi_{\rm pot}$ we denote finite linear combinations of elements of $\Phi_{\rm pot}$.
In what follows we restrict to the case $\omega\in\tilde\Omega$. 

Due to Lemma \ref{lem:sto-conver-grad} there exist $u_{\omega}\in W^{1,q}(\bQ)$
and $\v_{\omega}\in L^{q}(\bQ; L_{\rm pot}^{q}(\Omega))$ such that $\nabla\ueta\tsweak\nabla u_{\omega}+\v_{\omega}$
and $u^{\eta}\to u_{\omega}$ along a subsequence, which we denote
$u^{\eta}$ for simplicity. Let $u_{0}\in W_{0}^{1,q}(\bQ)$ and $\v_{0}\in L^{q}(\bQ; L_{\rm pot}^{q}(\Omega))$
be a minimizer of the functional $F_{0}$. 

Now, let $\delta>0$. There exists $\v_\delta\in L^{q}(\bQ; \R \Phi_{\rm pot})$ 
which is simple and has compact support in $\bQ$ such that 
$\|\v_0-\v_\delta\|_{L^{q}(\bQ; L_{\rm pot}^{q}(\Omega))}<\delta$. 
In particular, we find sets $A_i\subset{\bQ}$, $1\leq i\leq K_\delta$ and 
functions $\hat\v_i\in {\rm span} \Phi_{\rm pot}$ such that
$$ \v_\delta(x,\omega)=\sum_{i=1}^{K_\delta}\chi_{A_i}(x)\hat\v_i(\omega)\,.$$
Let $\left(\varphi_\eps\right)_{\eps>0}\subset C^\infty_0({\rm B}_\eps)$ be a family of mollifiers. For 
$\eps>0$ we denote $\v_{\eps,\delta}(\cdot,\omega):=\varphi_\eps\ast \v(\cdot,\omega)$, 
where $\ast$ is the convolution with respect to the $\bQ$-coordinate. 
Then $\v_{\eps,\delta}\in C^1_0(\overline\bQ; {\rm span} \Phi_{\rm pot})$ for $\eps>0$ small enough and 
$\|\v_{\eps,\delta}-\v_\delta\|_{L^{q}(\bQ; L_{\rm pot}^{q}(\Omega))}\to0$ as $\eps\to0$. 

Given $x\in\bQ$ 
we apply Lemma \ref{lem:vanishing-ergodic-potential} and denote 
$V_{\eta,\eps,\delta}^\omega(x,\cdot)\in H^1(\bQ)$ the $\eta$-potential to 
$\v_{\eps,\delta}(x)$ and $\eta$ and $V_{\eta,\delta}^\omega(x,\cdot)\in H^1(\bQ)$ 
the potential to $\v_{\delta}(x)$ and $\eta$. 
Further, if $\hat V_{i,\eta}^\omega$ is the corresponding $\eta$-potential to $\hat\v_i$, we find 
$$ V_{\eta,\eps,\delta}^\omega(x,z)=\sum_{i=1}^{K_\delta}(\chi_{A_i}\ast\varphi_\eps)(x)\hat V_{i,\eta}^\omega(z)\quad{\rm and}\quad
V_{\eta,\delta}^\omega(x,z)=\sum_{i=1}^{K_\delta}\chi_{A_i}(x)\hat V_{i,\eta}^\omega(z)$$
Since the mapping $\hat\v_i\mapsto \hat V_{i,\eta}^\omega$ is linear, 
we find $V_{\eta,\eps,\delta}^\omega\in C^1_0(\bQ;H^1(\bQ))$ with
\begin{align*}
\nabla \left(V_{\eta,\eps,\delta}^\omega(x,x)\right)& =\nabla_xV_{\eta,\eps,\delta}^\omega(x,x)+\nabla_z V_{\eta,\eps,\delta}^\omega(x,x)\\
&=\left(V_{\eta,\delta}^\omega(\cdot,x)\ast \nabla\varphi_\eps\right)(x)
	+\left(\varphi_\eps\ast \v_{\delta}(\cdot,\tau_{\frac x\eta})\right)(x)
\end{align*}
For the first term on the right hand side we obtain 
\begin{align*}
\int_{\bQ}\left|\left(V_{\eta,\delta}^\omega(\cdot,x)\ast \nabla\varphi_\eps\right)(x)\right|^q\d x
& \leq \int_{\Rd}\d x\int_{\Rd}\d z \left\|\nabla\varphi_\eps\right\|^q_{\infty}\left|V_{\eta,\delta}^\omega(z-x,x)\right|^q\\
& \leq \left\|\nabla\varphi_\eps\right\|^q_{\infty}\int_{\bQ}\d x\sum_{i=1}^{K_\delta}\left|\hat V_{i,\eta}^\omega(x)\right|^q 
\end{align*}

Since the last expression on the right hand side converges to $0$ as $\eta\to0$ by Lemma 
\ref{lem:vanishing-ergodic-potential}, we find that $\nabla V_{\eta,\eps,\delta}(x,x)\tsweak \v_{\eps,\delta}(x,\omega)$.
 
Hence, we find for $\eps$ small enough that $V_{\eta,\eps,\delta}^\omega(x,x)$ is a valid point of evalutation for $F_{\eta}$ and
\begin{align*}
F_{\eta}(u_{0}+V_{\eta,\eps,\delta}^\omega) & =
\int_{\bQ}f_{\omega,\eta}(x,\nabla u_{0}(x)+\left(V_{\eta,\delta}^\omega(\cdot,x)\ast \nabla\varphi_\eps\right)(x)+\left(\varphi_\eps\ast \v_{\delta}(\cdot,\tau_{\frac x\eta}\omega)\right)(x))\,dx\\
 & \to\int_{\bQ}\int_{\Omega}f(x,\omega,\nabla u_{0}(x)+\v_{\eps,\delta}(x,\tilde{\omega}))\,d\cP(\tilde{\omega})\,dx\qquad\text{as }\eta\to0\,.
\end{align*}
On the other hand, due to Lemma \ref{lem:General-Hom-Convex}, we
have 
\[
\lim_{\eta\to0}F_{\eta}(u_{0}+V_{\eta,\eps,\delta}^\omega)\geq\liminf_{\eta\to0}F_\eta(\ueta)\geq F_0(u_{\omega},\v_{\omega})\geq F_0(u_{0},\v_{0})\,.
\]
Since $f$ is continuous in $\xi$, we obtain from successively
passing to the limits $\eps\to0$ and $\delta\to0$
that 
\[
F_0(u_{0},\v_{0})\geq F_0(u_{\omega},\v_{\omega})\geq F_0(u_{0},\v_{0})\,.
\]
\end{proof}



\section{Homogenized system of equations}\label{Homogenization}

In this section, we are in the setting $\mu_\omega=\lebesgueL$ for all $\omega$. Hence, we frequently use the notations introduced in Remark \ref{rem:palm-lebesgue}.

The model equations of the problem (the microscopic problem) are
\begin{eqnarray}
\label{microPr1}  - \di_x \sigma_\eta(x,t) &=&  b(x,t),  \\
\label{microPr2} \sigma_\eta(x,t) &=&  \tilde{\mathbb C}\left[\tau_{\frac{x}{\eta}}\tilde\omega\right](\varepsilon
(\na_xu_\eta(x,t))-B z_\eta(x,t)),\\
\label{microPr3} \partial_t z_\eta(x,t) & \in &
\tilde g\left(\tau_{\frac{x}{\eta}}\tilde\omega, B^T \sigma_\eta(x,t) - \tilde{L}_\eta[\tau_{\frac{x}{\eta}}\tilde\omega]z_\eta(x,t)\right),
\end{eqnarray}
together with the homogeneous Dirichlet boundary condition
\begin{eqnarray}
\label{microPr4}  u_\eta(x,t)=0, \hspace{5ex} (x,t)\in \partial\bQ\times(0,\infty),
\end{eqnarray} 
and the initial condition
\begin{eqnarray}
\label{microPr5}  z_\eta(x,0)=\tilde{z}^{(0)}(x, \tau_{\frac{x}{\eta}}\tilde\omega),
 \hspace{12ex} x\in\bQ.
\end{eqnarray} 
Now, we state the main result on the stochastic homogenization of the weak solution $(u_\eta, \sigma_\eta, z_\eta)$ of 
problem (\ref{microPr1}) - (\ref{microPr5}).
\begin{Theor}\label{HomogEquations}
Suppose that all assumptions of Theorem~\ref{ExResultPositiveSemiDef} are satisfied.
 Let $(u_\eta, \sigma_\eta, z_\eta)$ be a weak solution of the initial-boundary value
problem (\ref{microPr1}) - (\ref{microPr5}). 
Then, there exist 
\begin{eqnarray}
u_0\in  W^{1,q}(0,T_e;W^{1,q}_0(\bQ,{\mathbb R}^3)),\ \ 
u_1\in  W^{1,q}(0,T_e; L^q(\bQ; L^q_{\rm pot}(\Omega; {\mathbb R}^3))),\non\\[1ex]
\sigma_0\in  H^{1}(0,T_e; L^2(\bQ; L^2_{\rm sol}(\Omega; {\cal S}^3))),\ \
z_0\in W^{1,q}(0,T_e; L^q(\bQ; L^q(\Omega; {\mathbb R}^N)))\non
\end{eqnarray}
 such that (up to a subsequence)
\begin{eqnarray}
&&u_\eta\stackrel{2s}{\weakto} u_0, \ \ \na u_\eta\stackrel{2s}{\weakto} \na{u_0}+u_1,\ \
\sigma_\eta\stackrel{2s}{\weakto} \sigma_0\ and \  z_\eta\stackrel{2s}{\weakto} z_0.\label{WeakTwoScaleLimits}
\end{eqnarray}
The weak two-scale limit function $(u_0,u_1,\sigma_0, z_0)$ solves the following homogenized
 problem:
\begin{eqnarray}
\label{HomogEqua1}  -\di_x \left(\int _\Omega\sigma_0(x,\omega,t) d\cP\right)  &=& b(x, t), \\
\non  \sigma_0(x, \omega, t) &=& \tilde{\mathbb C}[\omega] \Big( \ve( u_1(x, \omega, t)) 
  - Bz_0(x, \omega, t)  
\\ 
&&\hspace{17ex} \mbox{} + \ve (\na_x u_0(x, t)) \Big), \label{HomogEqua3}   
\end{eqnarray}       
which hold for $(x, \omega, t) \in \bQ\times\Omega\times [0,T_e ]$, and with
 the boundary condition  
\begin{equation}\label{HomogEqua6}  
u_0(x, t) = 0,\quad (x, t) \in \partial {\bQ} \times (0, T_e).  
\end{equation}
Moreover, the following variational inequality holds ($\tilde{\mathbb A}:=\tilde{\mathbb C}^{-1}$)
\begin{eqnarray}\label{Eqiv_Fitzpatrick_Ineq_Homo}
 &&\int_{\bQ}\int_\Omega\left\{ \left(\tilde{\mathbb A}[\omega]\sigma(x,\omega,t), \sigma(x,\omega,t)\right)+\left(\tilde{L}[\omega]z(x,\omega,t), z(x,\omega,t)\right)\right\}d{\cal P}dx
 \non\\
&&+\int_0^t\int_{\bQ}\int_\Omega f_{\tilde g}\left(\omega, B^T\sigma(x,\omega,\tau)-\tilde{L}[\omega]z(x,\omega,\tau), \partial_\tau z(x,\omega,\tau)\right)d{\cal P}dxd\tau\non\\
&& \le\int_{\bQ}\int_\Omega \left\{\left(\tilde{\mathbb A}[\omega]\sigma^{(0)}(x,\omega), \sigma^{(0)}(x,\omega)\right)+
 \left(\tilde{L}[\omega]\tilde{z}^{(0)}(x,\omega), \tilde{z}^{(0)}(x,\omega)\right)\right\}
d{\cal P}dx\non\\&&
+\int_0^t\int_{\bQ}\left(b(x,\tau), \partial_\tau u(x,\tau)\right)dxd\tau,
\end{eqnarray}
where $(v^{(0)}, v_1)\in H^1(\bQ, {\mathbb R}^3)\times
 L^2(\bQ, L^2_{pot}(\Omega))$ and $\sigma^{(0)}\in
L^2(\bQ, L^2_{sol}(\Omega))$ 
solve the linear elasticity problem
\begin{eqnarray}
- \di_x \sigma^{(0)}(x,\omega) &=&  b(x,0), \label{HoLePr1}\\
\sigma^{(0)}(x,\omega) &=& 
\tilde{\mathbb A}[\omega](\varepsilon(\na v^{(0)}(x)+v_1(x,\omega)) - B\tilde{z}^{(0)}(x,\omega)), 
  \label{HoLePr2}\\
v^{(0)}(x) &=& 0, \hspace{33ex} x \in \partial\bQ.\label{HoLePr3}
\end{eqnarray}
\end{Theor}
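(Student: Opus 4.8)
The strategy is to combine the a priori bounds already established in the proof of Theorem~\ref{ExResultPositiveSemiDef} with the two-scale compactness machinery of Section~\ref{sec:Preliminaries}, and then to pass to the limit in the Fitzpatrick-based weak formulation \eq{Eqiv_Fitzpatrick_Ineq}. First I would fix $\omega\in\tilde\Omega$ in a set of full $\cP$-measure on which Lemmas~\ref{lem:weak-fA-conv-time}, \ref{lem:sto-conver-grad}, \ref{lem:valid-ts-test-function} and \ref{lem:weak-convergence-equivalent-norms} all apply, and the ergodic realization property holds for the countable dense set $\Phi_p$. The bounds \eq{aprioriEstim3N}--\eq{aprioriEstim5N} (which are uniform in $\eta$, being independent of the microstructure scaling) give us, via Lemma~\ref{lem:weak-fA-conv-time} and Lemma~\ref{lem:sto-conver-grad} applied in the time-dependent setting, a subsequence along which $u_\eta\tsweak u_0$, $\na u_\eta\tsweak \na u_0+u_1$ with $u_1\in W^{1,q}(0,T_e;L^q(\bQ;L^q_{\rm pot}(\Omega;\R^3)))$, $\sigma_\eta\tsweak\sigma_0$, and $z_\eta\tsweak z_0$, with the stated regularity. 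The fact that $\sigma_0$ lands in $L^2_{\rm sol}(\Omega;\cS^3)$ follows by testing the equilibrium equation \eq{microPr1} with oscillating test functions of the form $\eta\,\psi(x)\,\phi(\tau_{x/\eta}\omega)$ and using that $-\di_x\sigma_\eta$ is fixed as $b$; in the two-scale limit this forces $\diveromega\sigma_0=0$, i.e. membership in the solenoidal space. The homogenized equilibrium equation \eq{HomogEqua1} comes by testing \eq{microPr1} with $x$-only test functions $\psi\in C^\infty_0(\bQ;\R^3)$ and using Lemma~\ref{lem:valid-ts-test-function}: the $\Omega$-average of $\sigma_0$ appears. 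The boundary condition \eq{HomogEqua6} is inherited from the weak $W^{1,q}_0$-convergence.

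The constitutive relation \eq{HomogEqua3} is obtained by passing to the two-scale limit in \eq{microPr2}. Here I would write $\tilde{\mathbb C}[\tau_{x/\eta}\omega]$ as an admissible coefficient (Lemma~\ref{lem:ergodicity-charac-functions} guarantees it is an ergodic $L^\infty$ function), and observe that $\tilde{\mathbb C}[\tau_{x/\eta}\omega](\ve(\na_x u_\eta)-Bz_\eta)$ two-scale converges against test functions of product type to $\tilde{\mathbb C}[\omega](\ve(\na_x u_0)+\ve(u_1)-Bz_0)$ --- the only subtlety being that $\ve(u_1)$ is the symmetrized potential part, which is why the Korn inequality on $\Omega$ (Lemma~\ref{lem:Korn-Omega}) is needed to control $u_1$ in $L^q(\bQ;L^q(\Omega))$ from its symmetrized gradient. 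Identifying the weak two-scale limit of the product requires one factor to converge strongly; since $\tilde{\mathbb C}$ is a fixed coefficient and the combination $\ve(\na_x u_\eta)-Bz_\eta = {\mathbb A}_\eta\sigma_\eta$ converges weakly, one concludes $\sigma_0=\tilde{\mathbb C}[\omega]({\ve(\na_x u_0)+\ve(u_1)-Bz_0})$ by testing against arbitrary product test functions and using the symmetry and boundedness of $\tilde{\mathbb C}$ (Lemma~\ref{lem:weak-convergence-equivalent-norms} is the right tool here to handle the quadratic terms). The auxiliary cell problem \eq{HoLePr1}--\eq{HoLePr3} for the initial data $(v^{(0)},v_1,\sigma^{(0)})$ is the two-scale limit of the discrete elliptic problems \eq{LePr1}--\eq{LePr3} at $t=0$, i.e. simply the $t=0$ instance of the homogenized elasticity system with $z$ replaced by $\tilde z^{(0)}$; existence and uniqueness of its solution is a standard stochastic homogenization of linear elasticity, and one checks $\sigma_\eta(0)\tsweak\sigma^{(0)}$.

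The heart of the proof --- and the main obstacle --- is passing to the limit in the variational inequality \eq{Eqiv_Fitzpatrick_Ineq}, equivalently in its Rothe form \eq{ConvergenceFitzpatrick1}. For the quadratic terms $\|{\mathbb A}^{1/2}_\eta\sigma_\eta(t)\|^2_\bQ$ and $\|L^{1/2}_\eta z_\eta(t)\|^2_\bQ$ on the left I would use weak lower semicontinuity in the two-scale sense (Lemma~\ref{lem:weak-convergence-equivalent-norms}, applied to $A=\tilde{\mathbb A}$ and $A=\tilde L$ respectively, noting $\tilde L$ is only positive semi-definite so one works with $\tilde L^{1/2}$), giving $\liminf\geq\int_\bQ\int_\Omega(\tilde{\mathbb A}[\omega]\sigma_0,\sigma_0)+\int_\bQ\int_\Omega(\tilde L[\omega]z_0,z_0)$. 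For the Fitzpatrick term $\int_0^t\int_\bQ F_g(x,B^T\sigma_\eta-L_\eta z_\eta,\partial_\tau z_\eta)\,dx\,d\tau$ I would invoke Lemma~\ref{lem:General-Hom-Convex}: the integrand $f_{\tilde g}(\omega,\cdot,\cdot)$ is convex in its $(\Sigma,\partial_t z)$ arguments, $\Sigma_\eta=B^T\sigma_\eta-L_\eta z_\eta\tsweak B^T\sigma_0-\tilde L[\omega]z_0$ (bounded in $L^p$ by \eq{aprioriEstim4N}) and $\partial_\tau z_\eta\tsweak\partial_\tau z_0$ (bounded in $L^q$ by \eq{aprioriEstim3N}), so the joint two-scale lower-semicontinuity of the convex integral functional yields the lower bound by $\int_0^t\int_\bQ\int_\Omega f_{\tilde g}(\omega,B^T\sigma_0-\tilde L[\omega]z_0,\partial_\tau z_0)$ --- this is where the Fitzpatrick representation pays off, since we only need lower semicontinuity of a convex functional rather than any monotone-operator limit argument. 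For the right-hand side, $\int_\bQ(\tilde{\mathbb A}_\eta\sigma^{(0)}_\eta,\sigma^{(0)}_\eta)\to\int_\bQ\int_\Omega(\tilde{\mathbb A}[\omega]\sigma^{(0)},\sigma^{(0)})$ and similarly for the $\tilde L$ term and the initial $z$ require \emph{strong} two-scale convergence of the initial data, which holds because $z^{(0)}_\eta(x)=\tilde z^{(0)}(x,\tau_{x/\eta}\omega)$ is an ergodic realization (the $z^{(0)}$ hypothesis), and because the cell problem solution $\sigma^{(0)}$ depends continuously on that data; the forcing term $\int_0^t(b,\partial_\tau u_\eta)\to\int_0^t(b,\partial_\tau u_0)$ since $b$ is fixed and $\partial_\tau u_\eta\weakto\partial_\tau u_0$ weakly in $L^q(0,T_e;W^{1,q}_0(\bQ))$ (no oscillation in $b$). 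Assembling these one-sided estimates in \eq{ConvergenceFitzpatrick1} and letting $\eta\to0$ produces \eq{Eqiv_Fitzpatrick_Ineq_Homo}, completing the proof. The delicate point to be careful about is that the Fitzpatrick functional $F_{\tilde g}$ may take the value $+\infty$, so one must check the coercivity \eq{inequMain} transfers to the homogenized setting to ensure all terms are well-defined and the class ${\cal M}$ structure is preserved under the limit; this is handled by Proposition~\ref{MainClassMaxMonoProp} together with the ergodic theorem applied to the lower bound $m(x)$.
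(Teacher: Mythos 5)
Your proposal follows essentially the same route as the paper's proof: a priori bounds combined with the two-scale compactness of Lemmas~\ref{lem:weak-fA-conv-time} and~\ref{lem:sto-conver-grad} give the limit functions; testing \eq{microPr1} with fixed and with oscillating test functions gives \eq{HomogEqua1} and the solenoidality of $\sigma_0$; the constitutive relation passes to the limit directly since $\tilde{\mathbb C}$ is an admissible $L^\infty$ coefficient; and the Fitzpatrick inequality is closed by lower semicontinuity via Lemmas~\ref{lem:weak-convergence-equivalent-norms} and~\ref{lem:General-Hom-Convex}, with the right-hand side handled by the ergodicity of the initial data and the cell problem \eq{HoLePr1}--\eq{HoLePr3}. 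One small technical point: for the solenoidality step you propose a test function of the form $\eta\,\psi(x)\,\phi(\tau_{x/\eta}\omega)$, which implicitly requires a differentiable $\phi$ on $\Omega$; the paper instead uses $\eta\,\phi(x,t)\,V_\eta^{\tilde\omega}(x)$ built from the $\eta$-potential $V_\eta^{\tilde\omega}$ of Lemma~\ref{lem:vanishing-ergodic-potential} (which satisfies $\nabla V_\eta^{\tilde\omega}=\v(\tau_{x/\eta}\omega)$ with $\|V_\eta^{\tilde\omega}\|_{L^p}\to0$), precisely because no smooth potential on $\Omega$ is available in the stochastic setting — the underlying idea is the same, but this is the correct way to formalize it. Also, Lemma~\ref{lem:weak-convergence-equivalent-norms} is an lsc tool for quadratic forms and is not what identifies the two-scale limit of $\tilde{\mathbb A}[\tau_{x/\eta}\omega]\sigma_\eta$; that identification simply follows from the definition of two-scale convergence and Lemma~\ref{lem:valid-ts-test-function}, since $\tilde{\mathbb A}\phi$ remains an admissible test function for bounded $\tilde{\mathbb A}$. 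Apart from these small misattributions, the argument matches the paper.
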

The careful reading of the proof of Theorem~\ref{ExResultPositiveSemiDef} suggests the
following result.
\begin{Prop}\label{uniform_estimates} 
Suppose that all assumptions of Theorem~\ref{ExResultPositiveSemiDef} are satisfied.
Then, the weak solution $(u_\eta,\sigma_\eta,z_\eta)$ of 
problem (\ref{microPr1}) - (\ref{microPr5}) (in the sense of Definition~\ref{WeakSol}) fullfills
the uniform estimates
\begin{eqnarray}
&&\{u_\eta\}_\eta \ {is}\  {uniformly}\  {bounded}\  {in}
 \ W^{1,q}(0,{T_e};L^q(\bQ; {\mathbb R}^3)), \non\\[1ex]
 &&\{\sigma_\eta\}_\eta\ {is}\  {uniformly}\  {bounded}\  {in}
 \ L^\infty(0,{T_e};L^{2}(\bQ; {\cal S}^3)),\non\\[1ex]
&&\{z_\eta\}_\eta \ {is}\  {uniformly}\  {bounded}\  {in}
 \ W^{1,q}(0,{T_e};L^q(\bQ; {\mathbb R}^N)), \non\\[1ex]
&&\{L^{1/2}z_\eta\}_\eta \ {is}\  {uniformly}\  {bounded}\  {in}
 \ L^\infty(0,{T_e};L^{2}(\bQ; {\mathbb R}^N)),\non\\[1ex]
 &&\{\Sigma_\eta\}_\eta \ {is}\  {uniformly}\  {bounded}\  {in}
 \ L^p(0, T_e; L^p(\bQ; {\mathbb R}^N)). \non
\end{eqnarray}
\end{Prop}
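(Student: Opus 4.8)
The plan is to read these bounds off the a~priori estimates already established in the proof of Theorem~\ref{ExResultPositiveSemiDef}, checking that every constant occurring there is independent of the scaling parameter~$\eta$. Concretely, for the data $\mathbb{C}_\eta[x]=\tilde{\mathbb C}[\tau_{\frac{x}{\eta}}\tilde\omega]$, $L_\eta[x]=\tilde L[\tau_{\frac{x}{\eta}}\tilde\omega]$, $g_\eta(x,\cdot)=\tilde g(\tau_{\frac{x}{\eta}}\tilde\omega,\cdot)$ of problem \eqref{microPr1}--\eqref{microPr5}, one sets up the Rothe time-discretisation \eqref{CurlPr1Dis}--\eqref{CurlPr6Dis} (solvable for each fixed $\eta,m$ as in \cite{Nesenenko12a}), tests \eqref{CurlPr1Dis} with $(u^n_m-u^{n-1}_m)/h$ and the inclusion \eqref{microPr3Dis} with $(z^n_m-z^{n-1}_m)/h$, and sums over the time steps to obtain the discrete energy inequality \eqref{AprioriEstimHelp11}. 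Passing to the Rothe interpolants gives \eqref{aprioriEstim2N}--\eqref{aprioriEstim5N}, and letting $m\to\infty$ for fixed $\eta$ together with weak lower semicontinuity of norms transfers the bounds to the weak solution $(u_\eta,\sigma_\eta,z_\eta)$.

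The $\eta$-independence of the relevant constants rests on the following points. The bounds $\beta^{-1}|\xi|^2\le\mathbb{A}_\eta[x]\xi\cdot\xi\le\alpha^{-1}|\xi|^2$, and the analogous ones for $\mathbb{C}_\eta$, hold uniformly in $x$ and $\eta$ by assumption; $L_\eta$ is positive semidefinite; the Korn and Poincaré constants of $\bQ$ are purely geometric; and the coercivity \eqref{inequMain} holds with $\eta$-independent $\alpha_1,\alpha_2$ and $\eta$-independent $m\in L^1(\bQ)$, which is part of the standing hypotheses and is consistent with the stationary structure $g_\eta(x,\cdot)=\tilde g(\tau_{\frac{x}{\eta}}\tilde\omega,\cdot)$ since the relevant $L^1(\bQ)$-datum is then a stationary rescaling of a single integrable function on $\Omega$, whose $L^1(\bQ)$-norms stay bounded as $\eta\to0$ by the ergodic theorem. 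With these facts the absorption argument that leads from \eqref{AprioriEstimHelp11} and \eqref{AprioriEstimHelp3} to \eqref{aprioriEstim1N} goes through with constants depending only on $\alpha,\beta,\alpha_1,\alpha_2,\|m\|_{L^1(\bQ)}$, on $\bQ$ and $T_e$, and on $\|b\|_{W^{1,p}(0,T_e;W^{-1,p}(\bQ))}$. This gives the uniform estimates for $\sigma_\eta$, $z_\eta$, $L^{1/2}z_\eta$, and --- via the coercivity in the original exponent~$p$ applied to the discrete inclusion $w^n_m\in g_\eta(x,\Sigma_{n,m})$, which controls $\int_{\bQ}|\Sigma_{n,m}|^p\,dx$ by the already bounded quantity $\int_{\bQ}(\Sigma_{n,m},w^n_m)\,dx$ --- the bound for $\Sigma_\eta$ in $L^p(0,T_e;L^p(\bQ;\mathbb{R}^N))$. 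For $u_\eta$, the spatial part is even easier: from $\varepsilon(\na_x u_\eta)=\mathbb{A}_\eta\sigma_\eta+Bz_\eta$, the embedding $L^2(\bQ)\hookrightarrow L^q(\bQ)$, Korn's inequality and Poincaré's inequality one gets $u_\eta$ bounded in $L^\infty(0,T_e;W^{1,q}_0(\bQ;\mathbb{R}^3))$ with no regularity input, which also supplies the gradient bound needed later for the two-scale limit.

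The one delicate point --- and the step I expect to be the main obstacle --- is the $\eta$-uniformity of the $L^q$-elliptic-regularity estimate \eqref{AprioriEstimHelp2}, which (applied to the time increments) is what controls $\partial_t u_\eta$: the equation $-\di_x(\mathbb{C}_\eta\varepsilon(\na_x\partial_t u_\eta))=\partial_t b-\di_x(\mathbb{C}_\eta B\partial_t z_\eta)$ has right-hand side bounded in $L^q(0,T_e;W^{-1,q}(\bQ))$, so $\partial_t u_\eta$ is controlled in $L^q(0,T_e;W^{1,q}_0(\bQ))\hookrightarrow L^q(0,T_e;L^q(\bQ))$ provided the $W^{1,q}$-estimate for the elasticity system with coefficient $\mathbb{C}_\eta$ has a constant independent of $\eta$. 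A priori the Calderón--Zygmund constant of the rapidly oscillating coefficient $\mathbb{C}_\eta$ depends on its modulus of continuity, which degenerates as $\eta\to0$. The way around this is the Meyers--Gehring higher-integrability estimate: there is $\delta>0$ depending only on the ellipticity ratio $\beta/\alpha$ and the dimension (and not on the modulus of continuity) such that for $q\in[2-\delta,2]$ the $W^{1,q}$-estimate, and by duality the $W^{1,q'}$-estimate for $q'$ slightly above $2$, holds with a constant depending only on $\alpha,\beta,n$ and $\bQ$, hence uniformly in $\eta$. For $q$ in this range the argument is complete; a general exponent $q\in(1,2]$ would require the corresponding uniform $W^{1,q}$ regularity estimates for the oscillating elasticity system, which is where the genuine work lies. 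Apart from this point, the passage $m\to\infty$ and the lower-semicontinuity argument are routine and produce exactly the five asserted estimates.
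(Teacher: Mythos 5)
Your approach --- read the uniform estimates directly off the a~priori estimates in the proof of Theorem~\ref{ExResultPositiveSemiDef} after checking $\eta$-independence of every constant --- is precisely the argument the paper gestures at with its one-sentence justification (``The careful reading of the proof of Theorem~\ref{ExResultPositiveSemiDef} suggests the following result''), so at the top level you and the paper agree. Your list of manifestly $\eta$-uniform inputs (two-sided ellipticity of ${\mathbb C}_\eta$ and ${\mathbb A}_\eta$, positive semidefiniteness of $L_\eta$, the coercivity constants $\alpha_1,\alpha_2$ together with the $L^1$-norm of $m$ via the ergodic theorem, geometric Korn and Poincar\'e constants, and the data norms of $b$ and $\tilde z^{(0)}$) is correct.

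The delicate point you single out is genuine, and the paper passes over it silently. The constant $C$ in (\ref{AprioriEstimHelp2}) is only asserted to be independent of $n$ and $m$; nothing is claimed about $\eta$. Since ${\mathbb C}_\eta[x]=\tilde{\mathbb C}[\tau_{x/\eta}\tilde\omega]$ has a modulus of continuity that degenerates as $\eta\to0$, the $W^{1,q}$ estimate quoted from \cite{Giusti2003}, which relies on continuity of the coefficient, does not by itself produce an $\eta$-uniform constant. You frame the consequence as affecting only $\partial_t u_\eta$, but it is more pervasive: this same constant multiplies the term $\epsilon C\,\|(z^n_m-z^{n-1}_m)/h\|^q_{q,\bQ}$ in (\ref{AprioriEstimHelp3}), and the Young absorption leading to (\ref{aprioriEstim1N}) requires $\epsilon C$ to stay below the coercivity constant $\alpha_1$; if $C=C(\eta)\to\infty$ one is forced to take $\epsilon\to0$, hence $C_\epsilon\to\infty$, and all five claimed bounds degenerate together, not just the one on $u_\eta$. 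Your Meyers--Gehring remedy is sound for $q\in[2-\delta,2]$ with $\delta$ depending only on the ellipticity ratio and the dimension, and $p=q=2$ is handled by a plain energy estimate; outside that range, $\eta$-uniform $W^{1,q}$ regularity for the rapidly oscillating elasticity system is a substantive additional ingredient (of Avellaneda--Lin / large-scale-regularity type) that the paper neither assumes nor supplies. In short, your proposal reproduces what the paper implicitly does and correctly, honestly names the one nontrivial step that the paper's one-line justification leaves open.
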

The result of Proposition~\ref{uniform_estimates} plays an important role in the proof of Theorem~\ref{HomogEquations} below.
\paragraph{Proof of Theorem~\ref{HomogEquations}}
\begin{proof}
Proposition~\ref{uniform_estimates}  provides the required uniform estimates for the solution
of the microscopic problem (\ref{microPr1}) - (\ref{microPr5}). Therefore,
due to Lemma~\ref{lem:weak-fA-conv-time} 
there exist functions $u_0$, $u_1$, $\sigma_0$ and $z_0$ with the prescribed
regularities in Theorem~\ref{HomogEquations} such that the convergence results in
\eq{WeakTwoScaleLimits} hold. We note that 
\eq{microPr2} gives equation \eq{HomogEqua3}, namely
\begin{eqnarray}\label{HelpEq0}
\sigma_0(x, \omega, t) = \tilde{\mathbb C}[\omega] \big( \ve (\na_x u_0(x, t)+ u_1(x, \omega, t)) 
  - Bz_0(x, \omega, t)\big), \ \ {\rm a.e.}
\end{eqnarray}
Next, we test equation \eq{microPr1} with
 a function $\phi\in C_0^\infty(\bQ,{\mathbb R}^3)$. Passing to the stochastic two-scale limit in the integral identity corresponding to
 \eq{microPr1} yields
\begin{eqnarray}\label{HelpEq1}
\int_{\bQ}\int_\Omega(\sigma_0(x,\omega,t),\ve(\na_x\phi(x)))dxd\cP(\omega)=
\int_{\bQ}(b(x,t),\phi(x))dx.
\end{eqnarray}
Now, we consider $\phi_{\tilde\omega,\eta}(x, t)=\eta\phi(x, t)V_\eta^{\tilde\omega}(x)$, where 
$\phi\in C_0^\infty(\bQ_{T_e}, {\mathbb R})$
and $\v \in L^q_{\rm pot}(\Omega)$ with potential $V_\eta^{\tilde\omega}$ given by Lemma 
\ref{lem:vanishing-ergodic-potential}, as another test function in  \eq{microPr1} and obtain
\begin{eqnarray}\label{HelpEq2}
&&\eta\int_0^{T_e}\int_{\bQ}\big(\sigma_\eta(x,t),V_\eta^{\tilde\omega}(x)\otimes\na_x\phi(x,t)\big)dx\,dt\non\\
&&+
\int_0^{T_e}\int_{\bQ}\big(\sigma_\eta(x,t),\phi(x,t)\ve(\na_x\v(\tau_{\frac{x}{\eta}}\tilde\omega))\big)dx\,dt
\\ && 
=
\int_0^{T_e}\int_{\bQ}(b(x,t),\phi_\eta(x,t))dx\,dt.\non
\end{eqnarray}
The stochastic two-scale limit in equation \eq{HelpEq2} yields
\begin{eqnarray}\label{HelpEq3}
\int_0^{T_e}\int_{\bQ}\int_\Omega\big(\sigma_0(x,\omega,t),\ve(\na_\omega\v(\omega))\big)\phi(x,t)d{\cal P}(\omega)dxdt=0.
\end{eqnarray}
Equation \eq{HelpEq3} implies that the integral identity
\begin{eqnarray}\label{HelpEq4}
\int_\Omega\big(\sigma_0(x,\omega,t),\ve(\na_\omega\v(\omega))\big)d{\cal P}(\omega)=0
\end{eqnarray}
holds for ever $\v\in L^q_{\rm pot}(\Omega)$ for a.e. $(x,t)\in {\bQ}\times(0,T_e).$ Integral equality \eq{HelpEq4} yields that 
$\sigma_0(x, \cdot,t)\in L^2_{\rm sol}(\Omega; {\cal S}^3)$ for a.e. $(x,t)\in {\bQ}\times(0,T_e)$.

To pass to the stochastic two-scale limit in the inequality
\begin{eqnarray}\label{Eqiv_Fitzpatrick_Ineq_two-scale}
 &&\int_{\bQ}\left\{ \left(\tilde{\mathbb A}[\tau_{\frac{x}{\eta}}\tilde\omega]\sigma_\eta(x,t), \sigma_\eta(x,t)\right)+\left(\tilde{L}[\tau_{\frac{x}{\eta}}\tilde\omega]z_\eta(x,t), z_\eta(x,t)\right)\right\}dx
 \non\\
&&+\int_0^t\int_{\bQ}f_{\tilde g}\left(\tau_{\frac{x}{\eta}}\tilde\omega, B^T\sigma_\eta(x,\tau)-\tilde{L}[\tau_{\frac{x}{\eta}}\tilde\omega]z_\eta(x,\tau), \partial_\tau z_\eta(x,\tau)\right)dx\,d\tau\non\\
&& \le\int_{\bQ} \left(\tilde{\mathbb A}[\tau_{\frac{x}{\eta}}\tilde\omega]\sigma^{(0)}_\eta(x), \sigma^{(0)}_\eta(x)\right)dx
+ \int_0^t\int_{\bQ}\left(b(x,\tau), \partial_\tau u_\eta(x,\tau)\right)dx\,d\tau\non
\\&&+
\int_{\bQ} \left(\tilde{L}[\tau_{\frac{x}{\eta}}\tilde\omega]\tilde{z}^{(0)}(x,\tau_{\frac{x}{\eta}}\tilde\omega), \tilde{z}^{(0)}(x,\tau_{\frac{x}{\eta}}\tilde\omega)\right)dx,\non
\end{eqnarray}
we use the results of Lemma~\ref{lem:weak-convergence-equivalent-norms} and Lemma~\ref{lem:General-Hom-Convex} and obtain
\begin{eqnarray}
 &&\int_{\bQ}\int_\Omega\left\{ \left(\tilde{\mathbb A}[\omega]\sigma(x,\omega,t), \sigma(x,\omega,t)\right)+\left(\tilde{L}[\omega]z(x,\omega,t), z(x,\omega,t)\right)\right\}d{\cal P}(\omega)dx
 \non\\
&&+\int_0^t\int_{\bQ}\int_\Omega f_{\tilde g}\left(\omega, B^T\sigma(x,\omega,\tau)-\tilde{L}[\omega]z(x,\omega,\tau), \partial_\tau z(x,\omega,\tau)\right)d{\cal P}(\omega)dxd\tau\non\\
&& \le\int_{\bQ}\int_\Omega \left\{\left(\tilde{\mathbb A}[\omega]\sigma^{(0)}(x,\omega), \sigma^{(0)}(x,\omega)\right)+
 \left(\tilde{L}[\omega]\tilde{z}^{(0)}(x,\omega), \tilde{z}^{(0)}(x,\omega)\right)\right\}
d{\cal P}(\omega)dx\non\\&&
+\int_0^t\int_{\bQ}\left(b(x,\tau), \partial_\tau u(x,\tau)\right)dxd\tau,\non
\end{eqnarray}
where $\sigma^{(0)}\in L^2(\bQ, L^2(\Omega))$ solves the following linear elasticity problem \eqref{HoLePr1}--\eqref{HoLePr3},
which is obtained by the passage to the stochastic two-scale limit in equations
\eq{LePr1} - \eq{LePr3}. Here, $v^{(0)}\in H^1(\bQ, {\mathbb R}^3)$ and 
$v_1\in L^2(\bQ, L^2_{\rm pot}(\Omega))$.

Therefore, we conclude that the limit function $(u_0, u_1, \sigma_0, z_0)$ satisfies the homogenized problem \eq{HomogEqua1} - \eq{HomogEqua6} and the
variational inequality \eq{Eqiv_Fitzpatrick_Ineq_Homo}.
\end{proof}


\section*{Appendix: Rothe's approximation functions}\label{Appendix}
Here we recall the definition of Rothe's approximation functions.
For any family $\{\xi^{n}_m\}_{n=0,...,2m}$ of functions in a reflexive Banach
space $X$ and for $h=T_e/m$, we define
{\it the piecewise affine interpolant} $\xi_m\in C([0,T_e],X)$ by
\begin{eqnarray}\label{RotheAffineinterpolant}
\xi_m(t):= \left(\frac{t}h-(n-1)\right)\xi^{n}_m+
\left(n-\frac{t}h\right)\xi^{n-1}_m \ \ {\rm for} \ (n-1)h\le t\le nh,
\end{eqnarray}
and {\it the piecewise constant interpolant} $\bar\xi_m\in L^\infty(0,T_e;X)$ by
\begin{eqnarray}\label{RotheConstantinterpolant}
\bar\xi_m(t):=\xi^{n}_m\  {\rm for} \ (n-1)h< t\le nh, \ 
 n=1,...,2^m, \ {\rm and} \ \bar\xi_m(0):=\xi^{0}_m.
\end{eqnarray}
For the further analysis we recall the following property of $\bar\xi_m$
and $\xi_m$: 
\begin{eqnarray}\label{RotheEstim}
\|\xi_m\|_{L^p(0,T_e;X)}\le\|\bar\xi_m\|_{L^p(-h,T_e;X)}\le
\left(h \|\xi^{0}_m\|^p_X+\|\bar\xi_m\|^p_{L^p(0,T_e;X)}\right)^{1/p},
\end{eqnarray}
where $\bar\xi_m$ is formally extended to $t\le0$ by $\xi^{0}_m$ and
$1\le p\le\infty$ (see \cite{Roubi05}).

\section*{Acknowledgement} M.H. is financed by DFG within research project SFB 1114 -- C05. N. S. acknowledges the financial support by the DFG within the research project NE 1498/4-1.

%


\bibliographystyle{plain} 
{\footnotesize
\bibliography{hom-convex-stochastic,literaturliste}
}

\end{document}